\title{On inclusions between quantified provability logics}
\author{Taishi Kurahashi\footnote{Email: kurahashi@people.kobe-u.ac.jp}
\footnote{Graduate School of System Informatics, Kobe University, 1-1 Rokkodai, Nada, Kobe 657-8501, Japan.}}
\date{}
\theoremstyle{plain}
\newtheorem{thm}{Theorem}[section]
\newtheorem{lem}[thm]{Lemma}
\newtheorem{prop}[thm]{Proposition}
\newtheorem{cor}[thm]{Corollary}
\newtheorem{fact}[thm]{Fact}
\newtheorem{prob}[thm]{Problem}
\theoremstyle{definition}
\newtheorem{defn}[thm]{Definition}
\newtheorem{rem}[thm]{Remark}
\newcommand{\PA}{\mathbf{PA}}
\newcommand{\IS}{\mathbf{I\Sigma_1}}
\newcommand{\GL}{\mathbf{GL}}
\newcommand{\N}{\mathbb{N}}
\newcommand{\PR}{\mathrm{Pr}}
\newcommand{\Con}{\mathrm{Con}}
\newcommand{\D}{\mathrm{D}}
\newcommand{\gdl}[1]{\ulcorner#1\urcorner}
\newcommand{\PL}{\mathsf{PL}}
\newcommand{\QPL}{\mathsf{QPL}}
\newcommand{\Th}{\mathsf{Th}}
\begin{document}

\maketitle

\begin{abstract}
We investigate several consequences of inclusion relations between quantified provability logics. 
Moreover, we give a necessary and sufficient condition for the inclusion relation between quantified provability logics with respect to $\Sigma_1$ arithmetical interpretations. 
\end{abstract}

\section{Introduction}\label{Sec:Intro}

The notion of provability is a kind of modality, and modal logical studies of formalized provability have been extensively proceeded by many authors. 
Such studies have had many successes, especially in the framework of propositional modal logic. 
Solovay's arithmetical completeness theorem \cite{Sol76} is one of them. 
For every recursively enumerable extension $T$ of Peano Arithmetic $\PA$, let $\PR_T(x)$ be a usual provability predicate of $T$. 
A \textit{$T$-arithmetical interpretation} is a mapping $f_T$ from the set of all propositional modal formulas to the set of sentences of arithmetic such that $f_T$ commutes with each propositional connective and $f_T$ maps $\Box A$ to $\PR_T(\gdl{f_T(A)})$. 
Let $\PL(T)$ be the set of all propositional modal formulas $A$ such that $T \vdash f_T(A)$ for every $T$-arithmetical interpretation $f_T$. 
This set is called the \textit{propositional provability logic} of $T$.  
Solovay's arithmetical completeness theorem states that if $T$ is a $\Sigma_1$-sound recursively enumerable extension of $\PA$, then $\PL(T)$ is exactly the propositional modal logic $\GL$.
Thus $\PL(T)$ is recursive, but does not contain any elements specific to the theory $T$.

Formalized provability is also studied in the framework of quantified modal logic. 
The main target of this study is the \textit{quantified provability logic $\QPL(T)$} of $T$, which consists of quantified modal sentences verifiable in $T$ under any $T$-arithmetical interpretation. 
Boolos \cite{Boo79} asked if $\QPL(\PA)$ is recursively enumerable or not, and in contrast to the propositional case, Vardanyan \cite{Var86} proved that $\QPL(\PA)$ is $\Pi^0_2$-complete. 
Hence the analogue of Solovay's arithmetical completeness theorem never holds in the case of quantified modal logic.
Moreover, Montagna \cite{Mon84} showed that some results which hold in the case of propositional logic are not inherited in the quantified case. 
Among other things, he proved that $\QPL(\PA)$ is not a subset of $\QPL(\mathbf{BG})$, where $\mathbf{BG}$ is the Bernays--G\"odel set theory.
Thus $\QPL(T)$ can vary depending on the theory $T$. 

Artemov \cite{Art86} showed that the quantified provability logic $\QPL(T)$ of $T$ can be different depending on the choice of a formula defining $T$. 
More precisely, we say that a formula $\tau(v)$ is a \textit{definition} of a theory $T$ if for any natural number $n$, $\tau(\overline{n})$ is true if and only if $n$ is the G\"odel number of an axiom of $T$. 
For each $\Sigma_1$ definition $\tau(v)$ of $T$, we can construct a $\Sigma_1$ provability predicate $\PR_\tau(x)$ of $T$ saying that ``$x$ is (the G\"odel number of a formula) provable in the theory defined by $\tau(v)$''. 
The notion of $\tau$-arithmetical interpretations is introduced as well by using $\PR_{\tau}(x)$ instead of $\PR_T(x)$. 
Then, the quantified provability logic $\QPL_\tau(T)$ of $\tau(v)$ is defined to be the set of all quantified modal sentences provable in $T$ under all $\tau$-arithmetical interpretations. 
Artemov proved that for any $\Sigma_1$-sound recursively enumerable extension $T$ of $\PA$ and any $\Sigma_1$ definition $\tau_0(v)$ of $T$, there exists a $\Sigma_1$ definition $\tau_1(v)$ of $T$ such that $\QPL_{\tau_0}(T) \nsubseteq \QPL_{\tau_1}(T)$. 

The results of Montagna and Artemov seem to indicate that inclusion relations between quantified provability logics are rarely established. 
Indeed, Kurahashi \cite{Kur13B} proved that for any natural numbers $i$ and $j$ with $0 < i < j$, there exists a $\Sigma_1$ definition $\sigma_i(v)$ of the theory $\mathbf{I\Sigma_i}$ such that for all $\Sigma_1$ definitions $\sigma_j(v)$ of $\mathbf{I\Sigma_j}$, $\QPL_{\sigma_i}(\mathbf{I\Sigma_i}) \nsubseteq \QPL_{\sigma_j}(\mathbf{I\Sigma_j})$ and $\QPL_{\sigma_j}(\mathbf{I\Sigma_j}) \nsubseteq \QPL_{\sigma_i}(\mathbf{I\Sigma_i})$. 
The situation of the inclusion relation between quantified provability logics is completely different from that of propositional case: it is known that for any theories $T_0$ and $T_1$, at least one of $\PL(T_0) \subseteq \PL(T_1)$ and $\PL(T_1) \subseteq \PL(T_0)$ holds (cf.~Visser \cite{Vis84}).

From this point of view, in the present paper, we investigate several consequences of the inclusion $\QPL_{\tau_0}(T_0) \subseteq \QPL_{\tau_1}(T_1)$ between quantified provability logics. 
Among other things, we prove that if $\QPL_{\tau_0}(T_0) \subseteq \QPL_{\tau_1}(T_1)$, then
\begin{enumerate}
	\item $T_0 + \Con_{\tau_0}$ is a subtheory of $T_1 + \Con_{\tau_1}$; 
	\item $T_0$ is $\Sigma_1$-conservative over $T_1$; 
	\item $\Con_{\tau_0}$ and $\Con_{\tau_1}$ are provably equivalent over $T_1$; and
	\item For any formula $\varphi(\vec{x})$, 
\[
	T_1 \vdash \forall \vec{x} \left (\PR_{\tau_0}(\gdl{\Con_{\tau_0} \to \varphi(\vec{\dot{x}})}) \leftrightarrow \PR_{\tau_1}(\gdl{\Con_{\tau_1} \to \varphi(\vec{\dot{x}})}) \right). 
\]
\end{enumerate}
Thus from our results, we certify that the inclusion relation between quantified provability logics holds only under limited situations. 
Moreover, our results also show that the quantified provability logic $\QPL_{\tau}(T)$ is not only complex, but also possesses much information about the theory $T$ and the provability predicate $\PR_\tau(x)$.

We also investigate provability logics with respect to $\Sigma_1$ arithmetical interpretations. 
In the propositional case, a $T$-arithmetical interpretation $f_T$ is called $\Sigma_1$ if for any propositional variable $p$, $f_T(p)$ is a $\Sigma_1$ sentence. 
Let $\PL^{\Sigma_1}(T)$ be the set of all propositional modal formulas $A$ such that $T \vdash f_T(A)$ for every $T$-arithmetical interpretation $f_T$ which is $\Sigma_1$. 
Visser proved that $\PL^{\Sigma_1}(\PA)$ is also recursive and exactly the propositional modal logic $\mathbf{GLV}$ (see Boolos \cite{Boo93}).
In the quantified case, Berarducci \cite{Ber89} also proved that $\QPL^{\Sigma_1}(\PA)$ is $\Pi^0_2$-complete. 
Thus, the situations of $\Sigma_1$ provability logics do not seem to be different from those of usual provability logics. 

On the other hand, there is an advantage to dealing with $\Sigma_1$ arithmetical interpretations for our purposes, which allows us to improve Artemov's Lemma used in the proof of Vardanyan's theorem.
Then, we can give a necessary and sufficient condition for the inclusion relation between quantified provability logics with respect to $\Sigma_1$ arithmetical interpretations. 
Namely, we prove that $\QPL_{\tau_0}^{\Sigma_1}(T_0) \subseteq \QPL_{\tau_1}^{\Sigma_1}(T_1)$ if and only if $T_0$ is a subtheory of $T_1$ and for any formula $\varphi(\vec{x})$, $T_1 \vdash \forall \vec{x} (\PR_{\tau_0}(\gdl{\varphi(\vec{\dot{x}})}) \leftrightarrow \PR_{\tau_1}(\gdl{\varphi(\vec{\dot{x}})}))$.

\section{Preliminaries}

Let $\mathcal{L}_A = \{0, S, +, \times, <, =\}$ be the language of first-order arithmetic. 
We call a set of $\mathcal{L}_A$-sentences simply a \textit{theory}. 
Peano Arithmetic $\PA$ is the theory consisting of basic axioms for $\mathcal{L}_A$ and induction axioms for $\mathcal{L}_A$-formulas. 
$\IS$ is the theory obtained from $\PA$ by restricting induction axioms to $\Sigma_1$ formulas. 
Throughout the present paper, $T$, $T_0$ and $T_1$ always denote recursively enumerable extensions of $\IS$\footnote{Based on the result of de Jonge \cite{DeJ05} that Artemov's Lemma (Fact \ref{AL}) holds for the theory $\IS$, we adopted $\IS$ as the base theory in this paper. 
See the paragraph immediately following Fact \ref{BooSurj}.}.
In the present paper. 
Let $\Th(T)$ be the set of all $\mathcal{L}_A$-sentences provable in $T$. 
Also, for each class $\Gamma$ of formulas, let $\Th_{\Gamma}(T) : = \Th(T) \cap \Gamma$. 
The standard model of arithmetic is denoted by $\N$. 
We say that $T$ is \textit{$\Sigma_1$-sound} if every element of $\Th_{\Sigma_1}(T)$ is true in $\N$. 
Notice that $\Sigma_1$-soundness implies consistency. 

For each natural number $n$, the numeral for $n$ is denoted by $\overline{n}$. 
We fix some natural G\"odel numbering, and for each $\mathcal{L}_A$-formula $\varphi$, let $\gdl{\varphi}$ be the numeral for the G\"odel number of $\varphi$. 
We say a formula $\tau(v)$ is a \textit{definition} of a theory $T$ if for any natural number $n$, $\N \models \tau(\overline{n})$ if and only if $n$ is the G\"odel number of some axiom of $T$. 
Hereafter, we assume that $\tau(v)$, $\tau_0(v)$ and $\tau_1(v)$ always denote $\Sigma_1$ definitions of $T$, $T_0$ and $T_1$, respectively. 
Then, we can construct a $\Sigma_1$ formula $\PR_{\tau}(x)$ saying that ``$x$ is (the G\"odel number of a formula) provable in the theory defined by $\tau(v)$''. 
The following fact is well-known. 

\begin{fact}[Derivability conditions (see Boolos \cite{Boo93} and Lindstr\"om \cite{Lin03})]\label{DC}
For any formulas $\varphi(\vec{x})$ and $\psi(\vec{x})$, 
\begin{enumerate}
	\item If $T \vdash \varphi(\vec{x})$, then $\IS \vdash \PR_\tau(\gdl{\varphi(\vec{\dot{x}})})$; 
	\item $\IS \vdash \PR_\tau(\gdl{\varphi(\vec{\dot{x}}) \to \psi(\vec{\dot{x}})}) \to (\PR_\tau(\gdl{\varphi(\vec{\dot{x}})}) \to \PR_\tau(\gdl{\psi(\vec{\dot{x}})}))$; 
	\item If $\varphi(\vec{x})$ is a $\Sigma_1$ formula, then $\IS \vdash \varphi(\vec{x}) \to \PR_\tau(\gdl{\varphi(\vec{\dot{x}})})$. 
\end{enumerate}
\qed
\end{fact}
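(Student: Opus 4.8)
The plan is to establish the three conditions separately, in each case by arithmetizing the corresponding syntactic operation on $\tau$-derivations and checking that $\IS$ proves the relevant closure property. Throughout I would rely on the background facts that the Gödel numbering and the numeralwise substitution function $\vec{x} \mapsto \gdl{\varphi(\vec{\dot{x}})}$ are primitive recursive and $\Sigma_1$-definable, and that $\IS$ proves the basic identities governing them (for instance, that the code of an implication is the formalized implication of the codes of its antecedent and consequent). These are precisely what make the arithmetized proof predicate $\PR_\tau(x)$ behave, provably in $\IS$, like the informal notion of $\tau$-provability.

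For condition 1, since $T \vdash \varphi(\vec{x})$ there is a fixed $T$-derivation $d$ of the universal closure $\forall \vec{x}\, \varphi(\vec{x})$. The assertion that the numeral $\overline{d}$ codes a $\tau$-proof of $\gdl{\forall \vec{x}\, \varphi}$ is a true $\Sigma_1$ sentence about a standard number, hence already provable in $\IS$, which yields $\IS \vdash \PR_\tau(\gdl{\forall \vec{x}\, \varphi})$. I would then formalize universal instantiation, namely $\IS \vdash \forall \vec{x}\,(\PR_\tau(\gdl{\forall \vec{y}\, \varphi(\vec{y})}) \to \PR_\tau(\gdl{\varphi(\vec{\dot{x}})}))$, since appending an $\forall$-elimination step with the numerals for $\vec{x}$ to a given proof is a $\Sigma_1$-definable operation whose correctness $\IS$ verifies. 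Combining the two gives condition 1. Condition 2 is the arithmetization of modus ponens: inside $\IS$, given codes witnessing $\PR_\tau(\gdl{\varphi(\vec{\dot{x}}) \to \psi(\vec{\dot{x}})})$ and $\PR_\tau(\gdl{\varphi(\vec{\dot{x}})})$, I would concatenate the two coded proofs and append a single MP step. The only point requiring care is the provable identity that $\gdl{\varphi(\vec{\dot{x}}) \to \psi(\vec{\dot{x}})}$ is the formalized implication of $\gdl{\varphi(\vec{\dot{x}})}$ and $\gdl{\psi(\vec{\dot{x}})}$, which again reduces to a property of the substitution function that $\IS$ proves; the resulting code witnesses $\PR_\tau(\gdl{\psi(\vec{\dot{x}})})$, and the whole construction is $\Sigma_1$ and provably correct.

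The substance, and the step I expect to be the main obstacle, is condition 3, provable $\Sigma_1$-completeness. I would argue by external induction on the build-up of the $\Sigma_1$ formula $\varphi$, establishing $\IS \vdash \varphi(\vec{x}) \to \PR_\tau(\gdl{\varphi(\vec{\dot{x}})})$ at each stage: the base case is provable $\Delta_0$-completeness, and the inductive cases treat conjunction, disjunction, the bounded quantifiers, and the unbounded existential quantifier. The delicate cases are the bounded quantifiers, where passing from ``the matrix holds below the bound'' to ``a single proof of the bounded formula exists'' requires iterating a proof construction up to the value of the bound; this is exactly where the $\Sigma_1$-induction available in $\IS$ is needed, and it is the reason $\IS$, rather than a weaker fragment, is taken as the base theory. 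Once this is in place, the unbounded existential case follows by a single formalized $\exists$-introduction. For the remaining bookkeeping I would lean on the standard treatments in Boolos \cite{Boo93} and Lindström \cite{Lin03}, since the genuinely new content of the present paper lies elsewhere.
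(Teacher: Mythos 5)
Your sketch is correct: it is the standard arithmetization argument---provability of a true $\Sigma_1$ statement about a concrete proof code followed by formalized universal instantiation for condition 1, arithmetized modus ponens for condition 2, and external induction on the formula with $\Sigma_1$-induction inside $\IS$ handling the bounded-quantifier case of provable $\Sigma_1$-completeness for condition 3. The paper offers no proof of this Fact at all, citing it as well known from Boolos and Lindstr\"om, and your outline is essentially the proof given in those references, so the two agree in approach.
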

Here $\gdl{\varphi(\vec{\dot{x}})}$ is an abbreviation for $\gdl{\varphi(\dot{x_1}, \ldots, \dot{x_n})}$ that is a primitive recursive term corresponding to a primitive recursive function calculating the G\"odel number of $\varphi(\overline{k_1}, \ldots, \overline{k_n})$ from $k_1, \ldots, k_n$. 

Let $\Con_\tau$ be the $\Pi_1$ sentence $\neg \PR_{\tau}(\gdl{0 = \overline{1}})$ stating that the theory defined by $\tau(v)$ is consistent. 
%$\PR_T(x)$ and $\Con_T$ denote $\PR_\tau(x)$ and $\Con_\tau$ for some natural $\Sigma_1$ definition $\tau(v)$ of $T$, respectively. 
For each sentence $\varphi$, let $(\tau + \varphi)(v)$ be the $\Sigma_1$ definition $\tau(v) \lor v = \gdl{\varphi}$ of $T + \varphi$. 
Then it is known that the formalized version of the deduction theorem holds: $\IS \vdash \forall x (\PR_{\tau + \varphi}(x) \leftrightarrow \PR_\tau(\gdl{\varphi} \dot{\to} x))$. 
Here $u \dot{\to} v$ is a primitive recursive term corresponding to a primitive recursive function calculating the G\"odel number of $\varphi \to \psi$ from the G\"odel numbers of $\varphi$ and $\psi$. 

The language of quantified modal logic is the language of first-order predicate logic without function and constant symbols equipped with the unary modal operators $\Box$ and $\Diamond$. 
%We define translations of quantified modal formulas into $\mathcal{L}_A$-formulas. 
We may assume that the languages of quantified modal logic and first-order arithmetic have the same variables. 

\begin{defn}
A mapping $f$ from the set of all atomic formulas of quantified modal logic to the set of $\mathcal{L}_A$-formulas satisfying the following condition is called an \textit{arithmetical interpretation}: For each atomic formula $P(x_1, \ldots, x_n)$, $f(P(x_1, \ldots, x_n))$ is an $\mathcal{L}_A$-formula $\varphi(x_1, \ldots, x_n)$ with the same free variables, and moreover $f(P(y_1, \ldots, y_n))$ is $\varphi(y_1, \ldots, y_n)$ for any variables $y_1, \ldots, y_n$. 
\end{defn}

\begin{defn}
Each arithmetical interpretation $f$ is uniquely extended to a mapping $f_{\tau}$ from the set of all quantified modal formulas to the set of $\mathcal{L}_A$-formulas inductively as follows: 
\begin{enumerate} 
	\item $f_\tau(\bot)$ is $0 = \overline{1}$;
	\item $f_\tau$ commutes with each propositional connective and quantifier; 
	\item $f_\tau(\Box A(x_1, \ldots, x_n))$ is the formula $\PR_\tau(\gdl{f_\tau(A(\dot{x_1}, \ldots, \dot{x_n}))})$. 
\end{enumerate}
\end{defn}

Notice that any quantified modal formula $A$ has the same free variables as $f_\tau(A)$. 
We are ready to introduce the quantified provability logic of $\tau(v)$.

\begin{defn}
The \textit{quantified provability logic $\QPL_\tau(T)$ of $\tau(v)$} is the set \[
	\{A \mid A\ \text{is a sentence and for all arithmetical interpretations}\ f, T \vdash f_\tau(A)\}.
\]
\end{defn}

The main purpose of the present paper is to investigate the inclusion relation $\QPL_{\tau_0}(T_0) \subseteq \QPL_{\tau_1}(T_1)$ between quantified provability logics. 
For this purpose, we heavily use Artemov's Lemma (Fact \ref{AL}) that is used in the proof of Vardanyan's theorem on the $\Pi_2$-completeness of the quantified provability logic of $\PA$. 
To state Artemov's Lemma, we prepare some definitions.

\begin{defn}
We prepare predicate symbols $P_Z(x)$, $P_S(x, y)$, $P_A(x, y, z)$, $P_M(x, y, z)$, $P_L(x, y)$ and $P_E(x, y)$ corresponding to members $0$, $S$, $+$, $\times$, $<$ and $=$ of $\mathcal{L}_A$, respectively. 
For each $\mathcal{L}_A$-formula $\varphi$, let $\varphi^\ast$ be a logically equivalent $\mathcal{L}_A$-formula where each atomic formula is one of the forms $x = 0$, $S(x) = y$, $x + y = z$, $x \times y = z$, $x < y$ and $x = y$. 
Let $\varphi^\circ$ be a relational formula obtained from $\varphi^\ast$ by replacing each atomic formula with the corresponding relation symbol in $\{P_Z, P_S, P_A, P_M, P_L, P_E\}$ adequately. 
Then $\varphi^\circ$ is a quantified modal formula. 
\end{defn}

Let $\mathrm{Seq}(s)$ be the formula naturally expressing that ``$s$ is a finite sequence''. 
Also let $lh(s)$ and $(s)_x$ be primitive recursive terms corresponding to primitive recursive functions calculating the length and $x$-th component of a finite sequence $s$, respectively. 

\begin{defn}
For each arithmetical interpretation $f$, let $R_f(x, y)$ be the formula
\[
	\exists s(\mathrm{Seq}(s) \land lh(s) = x + 1 \land (s)_x = y \land f(P_Z((s)_0)) \land \forall z < x\, f(P_S((s)_z, (s)_{z+1}))). 
\]
Let $R_f(\vec{x}, \vec{y})$ denote a conjunction $R_f(x_0, y_0) \land R_f(x_1, y_1) \land \cdots \land R_f(x_n, y_n)$. 
\end{defn}

The formula $R_f(x, y)$ means that $y$ represents $x$ under the interpretation that $f(P_Z(u))$ and $f(P_S(u, v))$ say ``$u$ represents $0$'' and ``$v$ represents the successor of a number represented by $u$'', respectively. 

We introduce the modal sentence $\D$ asserting the completeness of $P_K$ and $\neg P_K$ for every newly introduced predicate symbol $P_K$. 

\begin{defn}
Let $\D$ be the modal sentence
\[
	\bigwedge_{K \in \{Z, S, A, M, L, E\}} \Bigl( \forall \vec{x}(P_K(\vec{x}) \to \Box P_K(\vec{x})) \land \forall \vec{x}(\neg P_K(\vec{x}) \to \Box \neg P_K(\vec{x})) \Bigr).
\]
\end{defn}

We are ready to state Artemov's Lemma. 
In the statement of the lemma, the $\mathcal{L}_A$-sentence $\chi$ is a conjunction of several basic sentences of arithmetic such as $\forall x \exists y (S(x) = y)$ and $\forall x(x + 0 = x)$, which serves to incorporate a structure of arithmetic into a set. 

\begin{fact}[Artemov's Lemma (see {\cite[p.232]{Boo93}})]\label{AL}
There exists an $\mathcal{L}_A$-sentence $\chi$ such that $\IS \vdash \chi$ and for any arithmetical interpretation $f$ and $\mathcal{L}_A$-formula $\varphi(\vec{x})$, 
\[
	\IS \vdash \Con_\tau \land f_\tau(\D) \land f_\tau(\chi^\circ) \land R_f(\vec{x}, \vec{y}) \to \Bigl(\varphi(\vec{x}) \leftrightarrow f_\tau(\varphi^\circ(\vec{y}))\Bigr). 
\]
\qed
\end{fact}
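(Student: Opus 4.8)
The plan is to prove the biconditional by induction on the construction of $\varphi$ (equivalently of $\varphi^\circ$), after fixing $\chi$ to be a finite conjunction of basic arithmetical truths that pin down $0$, $S$, $+$, $\times$, $<$ recursively: for instance $\forall x \exists! y\, S(x) = y$, injectivity of $S$, $\forall x\, S(x) \neq 0$, $\forall x(x \neq 0 \to \exists y\, S(y) = x)$, the recursion equations $\forall x\, x + 0 = x$ and $\forall x \forall y\, x + S(y) = S(x + y)$ together with their analogues for $\times$, the basic ordering properties of $<$, and the equality axioms. All of these are provable in $\IS$, so $\IS \vdash \chi$; and their relational transcriptions, once pushed through $f_\tau$, assert exactly that the interpreted predicates $f(P_Z), f(P_S), f(P_A), f(P_M), f(P_L), f(P_E)$ provably define a model of basic arithmetic.

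First I would argue entirely from $f_\tau(\chi^\circ)$, reasoning inside $\IS$, that the representation relation $R_f$ is a provably total, single-valued function: by $\Sigma_1$ induction on $x$ — using that $f(P_Z)$ has a witness and $f(P_S)$ is total and single-valued, both consequences of $f_\tau(\chi^\circ)$ — one obtains $\forall x \exists! y\, R_f(x, y)$, and that this function respects the interpreted operations and order and is injective (the representatives $0, S0, SS0, \ldots$ being provably distinct from $S(x) \neq 0$ and injectivity of $S$). This yields the atomic base case: for each of the six atomic shapes, $R_f(\vec{x}, \vec{y})$ forces $f_\tau(\varphi^\circ(\vec{y}))$ to compute precisely the arithmetical value of $\varphi(\vec{x})$, so the two are equivalent, and no appeal to $\D$ or $\Con_\tau$ is needed here. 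The propositional connectives are immediate since $f_\tau$ and $(\cdot)^\circ$ commute with them, and the easy half of each quantifier clause follows from totality alone: if the interpreted universal statement holds, then instantiating it at the representative guaranteed by $R_f$ and applying the induction hypothesis transfers the universal statement down to $\N$.

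The main obstacle is the converse half of the universal quantifier — equivalently, moving a witness of an interpreted existential statement back to a genuine natural number. Here totality is insufficient: one needs that $R_f$ is provably \emph{onto} the interpreted universe, i.e.\ that every element over which the interpreted quantifiers range is in fact the representative of some standard number. This is precisely where $f_\tau(\D)$ and $\Con_\tau$ become indispensable. By $f_\tau(\D)$ the entire atomic diagram of the interpreted structure — each positive and each negative atomic instance — is $\PR_\tau$-provable, so the interpreted structure is contained in every model of the theory defined by $\tau$. Working under $\Con_\tau$ and using this provable completeness of the atomic diagram, the aim is to rule out a non-represented (``nonstandard'') element by showing that its existence would let one manufacture a $\tau$-proof of $0 = \overline{1}$, i.e.\ derive $\PR_\tau(\gdl{0 = \overline{1}})$, contradicting $\Con_\tau$. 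I expect this standardness argument to be the delicate heart of the proof: it is where the provable completeness of the atomic diagram, the consistency statement, and $\Sigma_1$-completeness (Fact \ref{DC}) must be orchestrated carefully, whereas everything else reduces to the bookkeeping of the embedding furnished by $f_\tau(\chi^\circ)$.
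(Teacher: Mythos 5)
Your overall architecture is reasonable and parallels the skeleton of the paper's argument: take $\chi$ to be a conjunction of $\IS$-provable basic facts, use $f_\tau(\chi^\circ)$ to make $R_f$ a provably total, atomic-preserving embedding onto an initial segment of the interpreted structure, and then run an induction on $\varphi$ in which everything reduces to one point — that $R_f$ is also provably \emph{surjective} — and you correctly locate $\Con_\tau \land f_\tau(\D)$ as entering exactly there (this surjectivity is precisely Fact \ref{BooSurj}). The genuine gap is that you never prove this surjectivity: you only restate it as a goal (``rule out a non-represented element by manufacturing a $\tau$-proof of $0=\overline{1}$'') and explicitly defer it as ``the delicate heart.'' But that step \emph{is} Artemov's Lemma; everything else is, as you yourself note, bookkeeping. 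And it cannot be obtained by merely ``orchestrating'' the provable atomic diagram, $\Con_\tau$, and $\Sigma_1$-completeness: those ingredients, handled directly, give you nothing beyond the initial-segment embedding you already have. What rules out a proper initial segment is a computability-theoretic diagonalization, and no trace of it appears in your sketch.

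Concretely, the paper's proof (following Boolos and \cite{Kur13A}) works in an arbitrary model $M$ of $\IS + \Con_\tau \land f_\tau(\D) \land f_\tau(\chi^\circ)$ and first upgrades your one-directional reading of $\D$ (``the atomic diagram is $\PR_\tau$-provable'') to biconditionals: from $\Con_\tau \land f_\tau(\D)$ one gets $f_\tau(P_K(\vec{x})) \leftrightarrow \PR_\tau(\gdl{f_\tau(P_K(\vec{\dot{x}}))})$ and $\neg f_\tau(P_K(\vec{x})) \leftrightarrow \PR_\tau(\gdl{\neg f_\tau(P_K(\vec{\dot{x}}))})$, so that every atomic relation of the quotient structure $M_f$ \emph{and its complement} is $\Sigma_1$ in $M$. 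This $\Delta_1$-ness of the diagram is what feeds a proof of Tennenbaum's theorem: if $M$ were a proper initial segment of $M_f$, one could use an element above the cut to code a set that separates an effectively inseparable pair of $\Sigma_1$ sets (equivalently, decides a Rosser-style fixed point), and it is this diagonalization that finally produces the contradiction with $\Con_\tau$. So your instinct about where the contradiction lands is right, but the mechanism — inseparable r.e.\ sets / Rosser diagonalization, not a direct construction of a $\tau$-proof of $0=\overline{1}$ from the non-represented element — is the missing idea. Relatedly, your gloss that $f_\tau(\D)$ makes ``the interpreted structure contained in every model of the theory defined by $\tau$'' is a semantic statement with no meaning inside $\IS$, and it obscures the $\Delta_1$ characterization that the argument actually needs.
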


We give a short outline of a proof of Artemov's Lemma based on the presentation in \cite{Kur13A}. 
Let $M$ be a model of $\IS + \Con_\tau \land f_\tau(\D) \land f_\tau(\chi^\circ)$. 
By the aid of $f_\tau(\chi^\circ)$, $f_\tau(P_E(x, y))$ defines an equivalence relation $\sim$ on $M$. 
Let $[a]$ be the equivalence class of $a \in M$ with respect to $\sim$. 
Then, the relations on $M$ defined by the formulas $P_K(\vec{x})$ for $K \in \{Z, S, A, M, L\}$ induce an $\mathcal{L}_A$-structure $M_f$ with the domain $\{[a] \mid a \in M\}$. 
For instance, $M_f \models [a] + [b] = [c] \iff M \models f_\tau(P_A(a, b, c))$. 
The sentence $f_\tau(\chi^\circ)$ guarantees that $M_f$ is well-defined and indeed an $\mathcal{L}_A$-structure satisfying a sufficiently strong fragment of $\IS$, and that for any $\vec{a} \in M$, $M_f \models \varphi(\vec{[a]}) \iff M \models f_\tau(\varphi^\circ(\vec{a}))$. 
Also $M$ is isomorphic to an initial segment of $M_f$ via an embedding defined by the formula $R_f(x, y)$. 
Moreover, from the sentence $\Con_\tau \land f_\tau(\D)$, we obtain the equivalences
\[
	f_\tau(P_K(\vec{x})) \leftrightarrow \PR_\tau(\gdl{f_\tau(P_K(\vec{\dot{x}}))})\ \text{and}\ \neg f_\tau(P_K(\vec{x})) \leftrightarrow \PR_\tau(\gdl{\neg f_\tau(P_K(\vec{\dot{x}}))}) 
\]
in $M$ for each $K \in \{Z, S, A, M, L, E\}$. 
Then both $f_\tau(P_K(\vec{x}))$ and $\neg f_\tau(P_K(\vec{x}))$ are equivalent to $\Sigma_1$ formulas in $M$. 
By applying a proof of Tennenbaum's theorem (see Kaye \cite{Kay91}), we obtain that $M$ and $M_f$ are in fact isomorphic, and hence are elementarily equivalent. 
Therefore, if $M \models R_f(\vec{a}, \vec{b})$, then $M \models \varphi(\vec{a})$ is equivalent to $M_f \models \varphi(\vec{[b]})$. 
Hence $M \models \varphi(\vec{a}) \leftrightarrow f_\tau(\varphi^\circ(\vec{b}))$. 

In the proof of Artemov's Lemma, the following facts are also used.  

\begin{fact}[See Boolos {\cite[Lemma 17.6]{Boo93}}]\label{BooSigma_1}
For any $\Sigma_1$ formula $\varphi(\vec{x})$ and arithmetical interpretation $f$, 
\[
	\IS \vdash f_\tau(\chi^\circ) \land R_f(\vec{x}, \vec{y}) \to \Bigl(\varphi(\vec{x}) \to f_\tau(\varphi^\circ(\vec{y})) \Bigr). 
\]
\qed
\end{fact}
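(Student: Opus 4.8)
The plan is to reason inside $\IS$: I fix a model $M \models \IS + f_\tau(\chi^\circ)$ together with elements $\vec x, \vec y$ satisfying $R_f(\vec x, \vec y)$, and prove the implication $\varphi(\vec x) \to f_\tau(\varphi^\circ(\vec y))$ by a (meta-)induction on the build-up of the $\Sigma_1$ formula $\varphi$. The first thing I would record is that $\chi^\circ$ contains no modal operator, so $f_\tau(\chi^\circ)$ is literally $f$ applied to the relational atoms; hence the basic arithmetical sentences packaged into $\chi$ — totality and functionality of the interpreted zero and successor, the recursion clauses for $+$ and $\times$, and the defining clauses for $<$ — are all available as statements about the interpreted predicates $f(P_Z), f(P_S), f(P_A), f(P_M), f(P_L)$ on $M$.

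Before the induction I would establish the structural properties of the representation relation that drive the whole argument. First, $R_f$ is \emph{functional up to $f(P_E)$}: if $R_f(x,u)$ and $R_f(x,v)$ then $M \models f(P_E(u,v))$, which follows from uniqueness of zero and functionality of successor by induction on $x$. Second, $R_f$ is \emph{total}, i.e. $M \models \forall w\,\exists v\, R_f(w,v)$, proved by induction on $w$: the base case is the existence of a zero element, and the step extends the length-$(w{+}1)$ successor-chain by one more node using the totality clause $\forall u\,\exists v\, f(P_S(u,v))$. Together these say that $R_f$ embeds $M$ onto an initial segment of the interpreted structure and respects the operations; for instance from $R_f(x,x') \land R_f(y,y') \land R_f(z,z') \land x+y=z$ one derives $M \models f(P_A(x',y',z'))$, the atomic transfer for $+$, itself by an internal induction on $y$ using the recursion axioms for addition in $\chi$. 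The clauses for $0$, $S$, $\times$ and $<$ are analogous, and the clause for $x=y$ reduces to functionality of $R_f$.

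With the atomic cases settled, the induction on $\varphi$ runs routinely, since $f_\tau$ commutes with the connectives and quantifiers and $(\cdot)^\circ$ commutes with them as well. The cases $\land$ and $\lor$ are immediate from the induction hypotheses. For a bounded existential $\exists z < t$ I take the witness $m < t$, use totality to get $v$ with $R_f(m,v)$ lying $f(P_L)$-below the representative of $t$, and apply the induction hypothesis. For a bounded universal $\forall z < t$ I use in addition the initial-segment property, namely that every interpreted element $f(P_L)$-below the representative of $t$ is itself the $R_f$-image of some real $m < t$, and then apply the induction hypothesis to each. Finally, for an unbounded existential $\exists z\,\psi$ I take a witness $w$, invoke totality to obtain $v$ with $R_f(w,v)$, apply the induction hypothesis to get $f_\tau(\psi^\circ(v,\vec y))$, and conclude $f_\tau((\exists z\,\psi)^\circ(\vec y))$.

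The hard part is not the combinatorics of the induction but carrying out the two inductions that underlie it — totality of $R_f$ and the atomic transfer for $+$ and $\times$ — within the weak base theory $\IS$. Because $f$ is an \emph{arbitrary} arithmetical interpretation, the formulas $f(P_S), f(P_A), f(P_M)$ need not be $\Sigma_1$, so the naive induction formulas (asserting existence of the relevant successor-chain or computation sequence) have uncontrolled quantifier complexity and are not directly covered by $\Sigma_1$-induction. Over a theory with full induction, as in Boolos's original treatment, this causes no difficulty; securing the same argument over $\IS$ is the delicate point, and this is exactly what de Jonge's refinement (cited above) provides, so I would rely on it to license these inductions over the weak base theory.
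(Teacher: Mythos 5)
Your proposal is correct and is essentially the paper's own route: the paper gives no proof of this statement but cites Boolos \cite[Lemma 17.6]{Boo93} for exactly the argument you reconstruct (the structural lemmas on $R_f$ — functionality up to $f(P_E)$, totality, atomic transfer, downward closure of the image — followed by induction on the build-up of the $\Sigma_1$ formula), and cites de Jonge \cite{DeJ05} for exactly the point you flag, namely that for an arbitrary interpretation $f$ these auxiliary inductions have uncontrolled quantifier complexity and are not instances of $\Sigma_1$-induction. One caution about wording: de Jonge's refinement does not \emph{license} such inductions over $\IS$ (nothing can, since induction for formulas of that complexity genuinely fails in some models of $\IS$); it reorganizes the argument to avoid them — but since the paper itself defers the $\IS$ version entirely to \cite{DeJ05}, your deferral is the same move and is legitimate.
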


\begin{fact}[See Boolos {\cite[Lemma 17.8]{Boo93}}]\label{BooSurj}
For any arithmetical interpretation $f$, 
\[
	\IS \vdash \Con_\tau \land f_\tau(\D) \land f_\tau(\chi^\circ) \to \forall y \exists x R_f(x, y). 
\]
\qed
\end{fact}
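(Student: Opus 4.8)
The plan is to reason inside $\IS$ under the standing hypotheses $\Con_\tau \land f_\tau(\D) \land f_\tau(\chi^\circ)$, first collapsing the logical complexity of the interpreted atomic relations and only then forcing surjectivity by a formalized Tennenbaum-style argument. The easy half is that the relation $R_f$ codes a map that is total and order-preserving onto an \emph{initial segment} of the interpreted ordering; this follows from Fact \ref{BooSigma_1} together with the basic arithmetical content of $f_\tau(\chi^\circ)$ (that the interpreted successor is total and injective, that the interpreted $0$ is not a successor, and that every nonzero interpreted element is a successor). The whole content of the statement is that this initial segment is all of the interpreted universe, and this is exactly where $\Con_\tau$ and $f_\tau(\D)$ must enter.

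First I would show that, provably in $\IS + \Con_\tau \land f_\tau(\D)$, each relation $f_\tau(P_K(\vec{x}))$ and its negation is equivalent to a $\Sigma_1$ formula. The sentence $f_\tau(\D)$ supplies $f_\tau(P_K(\vec{x})) \to \PR_\tau(\gdl{f_\tau(P_K(\vec{\dot x}))})$ and $\neg f_\tau(P_K(\vec{x})) \to \PR_\tau(\gdl{\neg f_\tau(P_K(\vec{\dot x}))})$ for each $K$. For the converse of the first implication, assume $\PR_\tau(\gdl{f_\tau(P_K(\vec{\dot x}))})$ while $\neg f_\tau(P_K(\vec{x}))$; then the second implication gives $\PR_\tau(\gdl{\neg f_\tau(P_K(\vec{\dot x}))})$, and Fact \ref{DC} combines the two provabilities into $\PR_\tau(\gdl{0 = \overline{1}})$, contradicting $\Con_\tau$. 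Hence $f_\tau(P_K(\vec{x})) \leftrightarrow \PR_\tau(\gdl{f_\tau(P_K(\vec{\dot x}))})$ and dually $\neg f_\tau(P_K(\vec{x})) \leftrightarrow \PR_\tau(\gdl{\neg f_\tau(P_K(\vec{\dot x}))})$, both right-hand sides being $\Sigma_1$. Substituting these $\Sigma_1$ equivalents for the occurrences of $f(P_Z)$ and $f(P_S)$ inside $R_f(x,y)$ (these coincide with $f_\tau(P_Z)$ and $f_\tau(P_S)$, since $f$ and $f_\tau$ agree on atomic formulas), and using that $\IS$ proves $\Sigma_1$-collection so that the bounded universal quantifier $\forall z < x$ in front of a $\Sigma_1$ matrix stays $\Sigma_1$, I obtain that $R_f(x,y)$, and hence $\exists x\, R_f(x,y)$, is provably equivalent to a $\Sigma_1$ formula.

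With this $\Delta_1$ presentation of the interpreted arithmetic in hand, I would prove surjectivity by contradiction, following the proof of Tennenbaum's theorem formalized in $\IS$. Assume $\neg\forall y\,\exists x\,R_f(x,y)$ and fix an interpreted $b$ with $\neg\exists x\,R_f(x,b)$; since the image of $R_f$ is an initial segment not containing $b$, the element $b$ lies interpreted-above every value reached by $R_f$ and is therefore nonstandard relative to that image. Fix two disjoint $\Sigma_1$ sets $A$ and $B$ whose effective inseparability is provable in $\IS$, say $A=\{n:\exists k\,\alpha(n,k)\}$ and $B=\{n:\exists k\,\beta(n,k)\}$. Using $b$ as a nonstandard bound and $\Sigma_1$-collection, I would form an element coding, for each interpreted index below $b$, whether an $\alpha$-witness or a $\beta$-witness appears first; reading off its standard part yields a set separating $A$ and $B$. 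Because extracting this information passes through the interpreted relations, which are $\Delta_1$ by the previous step, the separating set is itself $\Delta_1$, contradicting the inseparability provable in $\IS$. This contradiction, derivable entirely within $\IS$, gives $\forall y\,\exists x\,R_f(x,y)$.

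The main obstacle is this last step: faithfully carrying out the Tennenbaum coding inside $\IS$, in particular extracting an $\IS$-provable contradiction from the mere existence of an unreachable interpreted element, and doing the bookkeeping with respect to the interpreted equality and the interpreted ordering, which are themselves only $\Delta_1$ after the first reduction, rather than the genuine ones. By contrast, the complexity collapse of the second paragraph is routine once the interaction of $\Con_\tau$ with $f_\tau(\D)$ is exploited as above, and the forward preservation needed for the ``initial segment'' claim is provided directly by Fact \ref{BooSigma_1}.
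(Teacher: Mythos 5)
Your overall strategy is the right one, but it should be said up front that the paper itself gives no proof of this Fact: it is imported from Boolos (Lemma 17.8), together with de Jonge's result that $\PA$ can be lowered to $\IS$, and glossed only by the one-line remark that $R_f(x,y)$ defines a surjection from $M$ onto $M_f$. So your attempt has to be measured against the standard proof in those sources, whose two-stage shape (collapse the interpreted relations to $\Delta_1$ using $\Con_\tau \land f_\tau(\D)$, then run a formalized Tennenbaum-style argument) you correctly identify. Your second paragraph is sound: deriving $f_\tau(P_K(\vec{x})) \leftrightarrow \PR_\tau(\gdl{f_\tau(P_K(\vec{\dot{x}}))})$ and its dual from $\Con_\tau \land f_\tau(\D)$ via Fact \ref{DC}, and then using $\Sigma_1$-collection to see that $R_f(x,y)$ becomes $\Sigma_1$ under these hypotheses, is exactly the first half of the standard argument and matches the paper's own sketch of Artemov's Lemma.

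The genuine gap is in your final step, and it is not just bookkeeping. You fix $b$ outside the image, form the witness-comparison set bounded by $b$, and then claim a contradiction because ``reading off its standard part yields a set separating $A$ and $B$,'' which ``is itself $\Delta_1$, contradicting the inseparability provable in $\IS$.'' This step would fail. Inside $\IS$ there is no standard part to read off; and in the model-theoretic rendering (an arbitrary $M \models \IS$ plus the hypotheses), the standard part of a set that is $\Delta_1$-definable in $M$ with the nonstandard parameter $b$ need not be recursive, so it does not contradict recursive or effective inseparability of the actual r.e.\ sets $A$ and $B$. That external contradiction is available only in Tennenbaum's original setting, where the model's operations are assumed to be recursive functions on $\omega$; here no such assumption exists. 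The correct proof must produce an \emph{internal} contradiction: (i) the separation properties must be verified in the sense of $M$, i.e.\ for the internal sets $A^M = \{x \in M : M \models \exists k\, \alpha(x,k)\}$ and $B^M$, including nonstandard $x$ --- this uses $\Delta_0$-absoluteness along the initial-segment embedding and the $\IS$-provable disjointness of $A$ and $B$, neither of which appears in your sketch; and (ii) effective inseparability must be applied in its formalized, index-based form: the $\Sigma_1$ definitions of $S$ and of its complement carry the parameter $b$, hence correspond to (possibly nonstandard) $M$-indices of r.e.\ sets in the sense of $M$, and the recursion-theorem/Rosser diagonal must be executed inside $M$ (equivalently, inside $\IS$) to produce an element that $M$ itself sees lies in neither set, contradicting that the two sets cover $M$. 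A secondary concern: you present the ``initial segment'' facts as the easy half, obtainable from Fact \ref{BooSigma_1} before the complexity collapse, but in $\IS$ even the totality of $R_f$ and the downward closure of its image are proved by induction on formulas whose complexity is only controlled \emph{after} the collapse (this delicacy is precisely de Jonge's contribution in replacing $\PA$ by $\IS$), so the order of your two stages cannot be as stated.
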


Facts \ref{BooSigma_1} and \ref{BooSurj} follow from the observations that $M$ is isomorphic to an initial segment of $M_f$ and $R_f(x, y)$ defines a surjection from $M$ onto $M_f$, respectively. 
In Boolos \cite{Boo93}, these facts including Artemov's Lemma are stated in the forms that the corresponding formulas are proved in $\PA$, and de Jonge \cite{DeJ05} proved that $\PA$ can be replaced by $\IS$ (see also \cite{Kur13A}).

\begin{defn}
An arithmetical interpretation $f$ is \textit{natural} if for each $K \in \{Z, S, A, M, L, E\}$, $f$ maps $P_K(\vec{x})$ to the intended atomic formula (for example, $f(P_A(x, y, z))$ is $x + y = z$). 
\end{defn}

For every quantified modal formula $A$, let $\boxdot A$ be an abbreviation for $A \land \Box A$. 

\begin{prop}\label{Natural}
Let $f$ be any natural arithmetical interpretation. 
\begin{enumerate}
	\item For any $\mathcal{L}_A$-formula $\varphi(\vec{x})$, $\IS \vdash \forall \vec{x}(f_\tau(\varphi^\circ(\vec{x})) \leftrightarrow \varphi(\vec{x}))$;  
	\item $\IS \vdash f_\tau(\boxdot \D) \land f_\tau(\boxdot \chi^\circ)$. 
\end{enumerate}
\end{prop}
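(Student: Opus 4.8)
The plan is to treat the two parts separately, each by a direct syntactic computation of the interpretation followed by an appeal to the derivability conditions of Fact \ref{DC}.

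For part (1), the first observation is that $\varphi$ is an ordinary $\mathcal{L}_A$-formula and so carries no modal operator; hence $\varphi^\ast$ carries none either, and $\varphi^\circ$ is a purely first-order relational formula in the symbols $P_Z, P_S, P_A, P_M, P_L, P_E$ with no occurrence of $\Box$. Consequently the computation of $f_\tau(\varphi^\circ)$ never invokes clause (3) of the definition of $f_\tau$: it merely replaces each atomic $P_K(\vec{x})$ by $f(P_K(\vec{x}))$ and commutes with the connectives and quantifiers. Since $f$ is natural, $f(P_K(\vec{x}))$ is exactly the intended atomic formula that was replaced by $P_K$ in passing from $\varphi^\ast$ to $\varphi^\circ$, so this substitution inverts that replacement and $f_\tau(\varphi^\circ)$ is literally the formula $\varphi^\ast$. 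Because $\varphi^\ast$ is logically equivalent to $\varphi$ and has the same free variables, the sentence $\forall \vec{x}(f_\tau(\varphi^\circ(\vec{x})) \leftrightarrow \varphi(\vec{x}))$ is logically valid and a fortiori provable in $\IS$.

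For part (2) I would verify the four conjuncts $f_\tau(\chi^\circ)$, $f_\tau(\Box \chi^\circ)$, $f_\tau(\D)$, and $f_\tau(\Box \D)$ one at a time. The first is immediate from part (1): since $\IS \vdash f_\tau(\chi^\circ) \leftrightarrow \chi$ and $\IS \vdash \chi$ by Fact \ref{AL}, we get $\IS \vdash f_\tau(\chi^\circ)$. For $f_\tau(\D)$ I would write the interpretation out explicitly; writing $\delta_K(\vec{x})$ for the intended atomic formula $f(P_K(\vec{x}))$, it is the conjunction over $K$ of
\[
\forall \vec{x}\bigl(\delta_K(\vec{x}) \to \PR_\tau(\gdl{\delta_K(\vec{\dot{x}})})\bigr) \land \forall \vec{x}\bigl(\neg \delta_K(\vec{x}) \to \PR_\tau(\gdl{\neg \delta_K(\vec{\dot{x}})})\bigr).
\]
Here the crucial point is that each $\delta_K$ is a bounded formula, so both $\delta_K$ and its negation $\neg \delta_K$ are $\Sigma_1$; hence each conjunct is an instance of clause (3) of Fact \ref{DC}, and therefore $\IS \vdash f_\tau(\D)$.

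Finally, the two boxed conjuncts follow uniformly from clause (1) of Fact \ref{DC}. Having established $\IS \vdash f_\tau(\chi^\circ)$ and $\IS \vdash f_\tau(\D)$, and since $T$ extends $\IS$, we have $T \vdash f_\tau(\chi^\circ)$ and $T \vdash f_\tau(\D)$; applying clause (1) of Fact \ref{DC} to these sentences yields $\IS \vdash \PR_\tau(\gdl{f_\tau(\chi^\circ)})$ and $\IS \vdash \PR_\tau(\gdl{f_\tau(\D)})$, which are precisely $f_\tau(\Box \chi^\circ)$ and $f_\tau(\Box \D)$. Conjoining the four statements gives the claim. I expect the only points needing care to be the identification $f_\tau(\varphi^\circ) = \varphi^\ast$ in part (1), which is a matter of bookkeeping about the $\ast$ and $\circ$ operations and the absence of modalities, and the recognition in part (2) that the intended atomic formulas together with their negations are $\Sigma_1$, which is exactly what places the completeness conjuncts of $\D$ under clause (3); neither is a genuine difficulty.
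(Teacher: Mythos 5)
Your proposal is correct and matches the paper's proof in all essentials: part (2) is argued exactly as in the paper (Fact \ref{DC}.3 applies to the intended atomic formulas because they are $\Delta_0$, hence $\IS \vdash f_\tau(\D)$; Fact \ref{DC}.1 then yields the boxed conjuncts; and $f_\tau(\chi^\circ)$ follows from part (1) together with $\IS \vdash \chi$). For part (1) the paper simply argues by induction on the construction of $\varphi$, and your non-inductive observation that $f_\tau(\varphi^\circ)$ is literally $\varphi^\ast$ for a natural $f$ (since $\varphi^\circ$ is modality-free, so clause (3) of the definition of $f_\tau$ is never invoked) is just a compressed rendering of the same fact.
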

\begin{proof}
1. By induction on the construction of $\varphi(\vec{x})$. 

2. For each $K \in \{Z, S, A, M, L, E\}$, since $f_\tau(P_K(\vec{x}))$ is $\Delta_0$, it follows from Fact \ref{DC}.3 that $\IS$ proves $f_\tau(P_K(\vec{x})) \to \PR_\tau(\ulcorner f_\tau(P_K(\vec{\dot{x}})) \urcorner)$ and $\neg f_\tau(P_K(\vec{x})) \to \PR_\tau(\ulcorner \neg f_\tau(P_K(\vec{\dot{x}})) \urcorner)$. 
Thus $\IS \vdash f_\tau(\D)$. 
By Fact \ref{DC}.1, $\IS \vdash \PR_{\tau}(\gdl{f_{\tau}(\D)})$, and hence $\IS \vdash f_{\tau}(\boxdot \D)$. 

Also by Clause 1, $\IS \vdash f_\tau(\chi^\circ) \leftrightarrow \chi$. 
Since $\IS$ proves $\chi$, $\IS \vdash f_\tau(\chi^\circ)$. 
As above, $\IS \vdash f_\tau(\boxdot \chi^\circ)$ also holds. 
\end{proof}

Artemov's Lemma is used to prove Vardanyan's theorem, but what is important to us is the following observation by Visser and de Jonge.

\begin{fact}[Visser and de Jonge {\cite[Theorem 3]{VD06}}]\label{VD}
For any $\mathcal{L}_A$-sentence $\varphi$, the following are equivalent: 
\begin{enumerate}
	\item $T + \Con_\tau \vdash \varphi$. 
	\item $\Diamond \top \land \D \land \chi^\circ \to \varphi^\circ \in \QPL_\tau(T)$. 
\end{enumerate}
\end{fact}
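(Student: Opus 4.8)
The plan is to unfold the membership condition in (2) and then reduce each direction to the machinery already assembled, namely Artemov's Lemma (Fact \ref{AL}) and Proposition \ref{Natural}. First I would compute the arithmetical interpretation of the modal antecedent. Since $\Diamond \top$ abbreviates $\neg \Box \bot$, for every arithmetical interpretation $f$ we have $f_\tau(\Diamond \top) = \neg \PR_\tau(\gdl{0 = \overline{1}}) = \Con_\tau$. As $f_\tau$ commutes with the propositional connectives, condition (2) unfolds into the assertion that for every arithmetical interpretation $f$,
\[
	T \vdash \Con_\tau \land f_\tau(\D) \land f_\tau(\chi^\circ) \to f_\tau(\varphi^\circ).
\]

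For the direction (1) $\Rightarrow$ (2), I would fix an arbitrary arithmetical interpretation $f$ and invoke Artemov's Lemma. Because $\varphi$ is a sentence it has no free variables, so the premise $R_f(\vec{x}, \vec{y})$ is an empty conjunction and Fact \ref{AL} specializes to $\IS \vdash \Con_\tau \land f_\tau(\D) \land f_\tau(\chi^\circ) \to (\varphi \leftrightarrow f_\tau(\varphi^\circ))$. On the other hand, (1) together with the deduction theorem gives $T \vdash \Con_\tau \to \varphi$. Reasoning inside $T$ (which extends $\IS$) under the hypothesis $\Con_\tau \land f_\tau(\D) \land f_\tau(\chi^\circ)$, the second fact yields $\varphi$ and the Artemov equivalence then yields $f_\tau(\varphi^\circ)$; since $f$ was arbitrary, the unfolded form of (2) follows.

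For the converse (2) $\Rightarrow$ (1), a single well-chosen witness suffices, so I would instantiate the unfolded (2) at a \emph{natural} arithmetical interpretation $f$. Proposition \ref{Natural}.2 gives $\IS \vdash f_\tau(\D) \land f_\tau(\chi^\circ)$, so both conjuncts are provable in $T$ and may be discharged, leaving $T \vdash \Con_\tau \to f_\tau(\varphi^\circ)$. Proposition \ref{Natural}.1, applied to the sentence $\varphi$, gives $\IS \vdash f_\tau(\varphi^\circ) \leftrightarrow \varphi$, whence $T \vdash \Con_\tau \to \varphi$, i.e.\ $T + \Con_\tau \vdash \varphi$ by the deduction theorem.

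I do not anticipate a serious obstacle here, since Artemov's Lemma already carries the substantive model-theoretic content and the argument is largely a matter of feeding the correct interpretation into the correct lemma. The two points deserving care are the asymmetry between the directions — the forward implication needs the equivalence \emph{uniformly} over all interpretations $f$, supplied by Fact \ref{AL}, whereas the backward implication needs only one suitably behaved interpretation, supplied by Proposition \ref{Natural} — and the observation that $\varphi$ being a sentence is precisely what causes the $R_f(\vec{x}, \vec{y})$ premise in Artemov's Lemma to disappear, so that the equivalence between $\varphi$ and $f_\tau(\varphi^\circ)$ holds outright rather than relative to a representation predicate.
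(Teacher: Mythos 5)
Your proposal is correct and takes essentially the same route as the paper: the forward direction applies Artemov's Lemma (with the $R_f(\vec{x},\vec{y})$ premise vacuous since $\varphi$ is a sentence) to an arbitrary interpretation $f$, and the backward direction instantiates at a natural interpretation and discharges the antecedent via Proposition \ref{Natural}. The only difference is that you spell out intermediate steps (unfolding $f_\tau(\Diamond\top)$ as $\Con_\tau$ and combining the Artemov equivalence with $T \vdash \Con_\tau \to \varphi$) that the paper leaves implicit.
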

We give a proof of Visser and de Jonge's fact. 
\begin{proof}
$(1 \Rightarrow 2)$: 
Suppose $T + \Con_\tau \vdash \varphi$. 
By Artemov's Lemma, for any arithmetical interpretation $f$, 
\[
	T \vdash \Con_\tau \land f_\tau(\D) \land f_\tau(\chi^\circ) \to f_\tau(\varphi^\circ). 
\]
Thus $T \vdash f_\tau(\Diamond \top \land \D \land \chi^\circ \to \varphi^\circ)$. 
Hence $\Diamond \top \land \D \land \chi^\circ \to \varphi^\circ \in \QPL_\tau(T)$. 

$(2 \Rightarrow 1)$: 
Suppose $\Diamond \top \land \D \land \chi^\circ \to \varphi^\circ \in \QPL_\tau(T)$. 
For a natural arithmetical interpretation $f$, 
\[
	T \vdash \Con_\tau \land f_\tau(\D) \land f_\tau(\chi^\circ) \to f_\tau(\varphi^\circ). 
\]
By Proposition \ref{Natural}, $T + \Con_\tau \vdash \varphi$. 
\end{proof}

Visser and de Jonge's fact states that $\QPL_\tau(T)$ has the complete information about $\Th(T + \Con_\tau)$. 
Then we obtain some corollaries concerning inclusions between quantified provability logics. 

\begin{cor}\label{VDCor1}\leavevmode
\begin{enumerate}
	\item If $\QPL_{\tau_0}(T_0) \subseteq \QPL_{\tau_1}(T_1)$, then $\Th(T_0 + \Con_{\tau_0}) \subseteq \Th(T_1 + \Con_{\tau_1})$; 
	\item If $\QPL_{\tau_0}(T_0) = \QPL_{\tau_1}(T_1)$, then $\Th(T_0 + \Con_{\tau_0}) = \Th(T_1 + \Con_{\tau_1})$. 
\end{enumerate}
\end{cor}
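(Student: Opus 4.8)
The plan is to derive both clauses directly from Visser and de Jonge's fact (Fact \ref{VD}), which equates membership of a specific modal formula in $\QPL_\tau(T)$ with provability of an arithmetical sentence in $T + \Con_\tau$. The key observation is that Fact \ref{VD} gives, for each theory--definition pair, a \emph{uniform} translation $\varphi \mapsto (\Diamond \top \land \D \land \chi^\circ \to \varphi^\circ)$ sending arithmetical sentences to modal formulas, and crucially this translation does \emph{not} depend on which $\tau$ or $T$ we are considering --- the modal formula $\Diamond \top \land \D \land \chi^\circ \to \varphi^\circ$ is the same regardless of the pair. This uniformity is exactly what lets an inclusion between the provability logics transfer to an inclusion between the deductive closures $\Th(T + \Con_\tau)$.

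For Clause 1, I would first take an arbitrary $\mathcal{L}_A$-sentence $\varphi$ with $T_0 + \Con_{\tau_0} \vdash \varphi$. Applying the direction $(1 \Rightarrow 2)$ of Fact \ref{VD} to the pair $(\tau_0, T_0)$ yields $\Diamond \top \land \D \land \chi^\circ \to \varphi^\circ \in \QPL_{\tau_0}(T_0)$. By the hypothesized inclusion $\QPL_{\tau_0}(T_0) \subseteq \QPL_{\tau_1}(T_1)$, the very same modal formula lies in $\QPL_{\tau_1}(T_1)$. Now applying the direction $(2 \Rightarrow 1)$ of Fact \ref{VD} to the pair $(\tau_1, T_1)$ gives $T_1 + \Con_{\tau_1} \vdash \varphi$. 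Since $\varphi$ was arbitrary, this establishes $\Th(T_0 + \Con_{\tau_0}) \subseteq \Th(T_1 + \Con_{\tau_1})$.

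Clause 2 then follows immediately: if $\QPL_{\tau_0}(T_0) = \QPL_{\tau_1}(T_1)$, then both inclusions $\QPL_{\tau_0}(T_0) \subseteq \QPL_{\tau_1}(T_1)$ and $\QPL_{\tau_1}(T_1) \subseteq \QPL_{\tau_0}(T_0)$ hold, so applying Clause 1 in each direction yields $\Th(T_0 + \Con_{\tau_0}) \subseteq \Th(T_1 + \Con_{\tau_1})$ and the reverse inclusion, hence equality.

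Honestly, I do not expect a genuine obstacle here, since all the substantive work is already packaged inside Fact \ref{VD}; the corollary is a formal consequence obtained by composing the two directions of that biconditional across the inclusion. The only point warranting a moment's care is confirming that the modal formula produced from $\varphi$ is literally identical for both pairs --- that is, that the constructions of $\varphi^\circ$, of $\D$, and of $\chi^\circ$ are fixed independently of $\tau$ and $T$ --- which is clear from their definitions, as they depend only on the syntactic shape of $\varphi$ and on the fixed list of predicate symbols.
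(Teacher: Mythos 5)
Your proposal is correct and follows exactly the paper's own proof: apply Fact \ref{VD} in the direction $(1 \Rightarrow 2)$ for $(\tau_0, T_0)$, pass the resulting modal sentence through the inclusion, and apply Fact \ref{VD} in the direction $(2 \Rightarrow 1)$ for $(\tau_1, T_1)$, with Clause 2 following by symmetry. Your added remark that the translation $\varphi \mapsto (\Diamond \top \land \D \land \chi^\circ \to \varphi^\circ)$ is independent of the pair $(\tau, T)$ --- since $\chi$, $\D$ and $\varphi^\circ$ are fixed syntactic constructions --- is a point the paper leaves implicit, and it is indeed the reason the argument goes through.
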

\begin{proof}
1. Suppose $\QPL_{\tau_0}(T_0) \subseteq \QPL_{\tau_1}(T_1)$. 
Let $\varphi$ be any $\mathcal{L}_A$-sentence with $T_0 + \Con_{\tau_0} \vdash \varphi$. 
Then from Fact \ref{VD}, $\Diamond \top \land \D \land \chi^\circ \to \varphi^\circ \in \QPL_{\tau_0}(T_0)$. 
By the supposition, $\Diamond \top \land \D \land \chi^\circ \to \varphi^\circ \in \QPL_{\tau_1}(T_1)$. 
From Fact \ref{VD} again, $T_1 + \Con_{\tau_1} \vdash \varphi$. 
Therefore $\Th(T_0 + \Con_{\tau_0}) \subseteq \Th(T_1 + \Con_{\tau_1})$. 

Clause 2 follows from Clause 1. 
\end{proof}

The following corollary is an immediate consequence of Corollary \ref{VDCor1}.2. 

\begin{cor}\label{VDCor3}
If $\QPL_{\tau_0}(T_0) = \QPL_{\tau_1}(T_1)$ and $\Th(T_0) \subseteq \Th(T_1)$, then $T_1 \vdash \Con_{\tau_0} \leftrightarrow \Con_{\tau_1}$. 
\qed
\end{cor}

\section{On inclusions between quantified provability logics}

Inspired by Visser and de Jonge's fact, we explore further consequences of inclusion relationships between quantified provability logics that result from Artemov's Lemma. 

\subsection{Variations of Fact \ref{VD} and its consequences}

In this subsection, we prove variations of Visser and de Jonge's Fact \ref{VD} and its consequences. 
The following proposition is a variation of Fact \ref{VD} with respect to $\Sigma_1$ sentences. 

\begin{prop}\label{Sigma1_4}
For any $\Sigma_1$ sentence $\varphi$, the following are equivalent: 
\begin{enumerate}
	\item $T \vdash \varphi$. 
	\item $\chi^\circ \to \varphi^\circ \in \QPL_{\tau}(T)$. 
\end{enumerate}
\end{prop}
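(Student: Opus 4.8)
The plan is to follow the structure of the proof of Fact \ref{VD}, but to replace the full strength of Artemov's Lemma (Fact \ref{AL}) with its one-directional $\Sigma_1$ counterpart, Fact \ref{BooSigma_1}. The key observation driving the whole argument is that for a $\Sigma_1$ sentence we only need the $\Sigma_1$-specific implication coming out of the construction, and this is exactly what lets us discard the hypotheses $\Con_\tau \land f_\tau(\D)$ (and hence the modal conjunct $\Diamond \top \land \D$) that are present in Fact \ref{VD}.

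For the direction $(1 \Rightarrow 2)$, I would suppose $T \vdash \varphi$ and fix an arbitrary arithmetical interpretation $f$. Since $\varphi$ is a $\Sigma_1$ sentence it has no free variables, so applying Fact \ref{BooSigma_1} with the empty tuple of variables makes the conjunct $R_f(\vec{x}, \vec{y})$ the empty, trivially true conjunction and yields $\IS \vdash f_\tau(\chi^\circ) \to (\varphi \to f_\tau(\varphi^\circ))$. Because $T$ extends $\IS$ and $T \vdash \varphi$, this gives $T \vdash f_\tau(\chi^\circ) \to f_\tau(\varphi^\circ)$, that is, $T \vdash f_\tau(\chi^\circ \to \varphi^\circ)$. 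As $f$ was arbitrary and $\chi^\circ \to \varphi^\circ$ is a sentence, we conclude $\chi^\circ \to \varphi^\circ \in \QPL_\tau(T)$.

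For the converse $(2 \Rightarrow 1)$, I would specialize to a natural arithmetical interpretation $f$, just as in the proof of Fact \ref{VD}. The hypothesis gives $T \vdash f_\tau(\chi^\circ) \to f_\tau(\varphi^\circ)$. Proposition \ref{Natural}.1 supplies $\IS \vdash f_\tau(\chi^\circ) \leftrightarrow \chi$ and $\IS \vdash f_\tau(\varphi^\circ) \leftrightarrow \varphi$; since $\IS \vdash \chi$ by Fact \ref{AL}, we obtain $\IS \vdash f_\tau(\chi^\circ)$, and therefore $T \vdash f_\tau(\varphi^\circ)$, which is equivalent over $\IS$ to $\varphi$. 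Hence $T \vdash \varphi$.

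I do not anticipate a genuine obstacle: the entire content is the observation that $\Sigma_1$-ness allows Fact \ref{BooSigma_1} to stand in for Fact \ref{AL}, removing the need for a consistency or completeness assumption. The only point requiring a little care is the free-variable bookkeeping in the forward direction, namely checking that Fact \ref{BooSigma_1} degenerates correctly when $\varphi$ is a sentence so that the $R_f$ conjunct simply drops out; once that is confirmed, both directions are routine.
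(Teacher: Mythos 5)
Your proof is correct and follows essentially the same route as the paper: the forward direction applies Fact \ref{BooSigma_1} (with the $R_f$ conjunct vacuous since $\varphi$ is a sentence) to get $\IS \vdash f_\tau(\chi^\circ) \land \varphi \to f_\tau(\varphi^\circ)$, and the converse specializes to a natural arithmetical interpretation via Proposition \ref{Natural}. The paper merely compresses the converse to ``trivial by considering a natural arithmetical interpretation,'' which is exactly the argument you spell out.
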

\begin{proof}
$(1 \Rightarrow 2)$: Suppose $T \vdash \varphi$. 
By Fact \ref{BooSigma_1}, for any arithmetical interpretation $f$, $\IS \vdash f_{\tau}(\chi^\circ) \land \varphi \to f_{\tau}(\varphi^\circ)$. 
Hence $T \vdash f_{\tau}(\chi^\circ \to \varphi^\circ)$. 
We have $\chi^\circ \to \varphi^\circ \in \QPL_{\tau}(T)$. 

$(2 \Rightarrow 1)$: This is trivial by considering a natural arithmetical interpretation. 
\end{proof}

Then we obtain a variation of Corollary \ref{VDCor1} by a similar proof. 

\begin{cor}\label{Sigma1_5}\leavevmode
\begin{enumerate}
	\item If $\QPL_{\tau_0}(T_0) \subseteq \QPL_{\tau_1}(T_1)$, then $\Th_{\Sigma_1}(T_0) \subseteq \Th_{\Sigma_1}(T_1)$; 
	\item If $\QPL_{\tau_0}(T_0) = \QPL_{\tau_1}(T_1)$, then $\Th_{\Sigma_1}(T_0) = \Th_{\Sigma_1}(T_1)$. 
\end{enumerate}
\qed
\end{cor}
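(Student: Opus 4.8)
The plan is to follow the proof of Corollary \ref{VDCor1} line by line, replacing the appeal to Fact \ref{VD} by an appeal to its $\Sigma_1$ variant, Proposition \ref{Sigma1_4}. The crucial feature exploited in both arguments is that the modal sentence $\chi^\circ \to \varphi^\circ$ associated to a given $\mathcal{L}_A$-sentence $\varphi$ depends only on $\varphi$ itself, and not on the pair $(\tau, T)$. Hence it is a single fixed element of the language of quantified modal logic that may be tested for membership in either quantified provability logic, which is exactly what permits information to be transported across the inclusion hypothesis.

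For Clause 1, I would assume $\QPL_{\tau_0}(T_0) \subseteq \QPL_{\tau_1}(T_1)$ and take an arbitrary $\varphi \in \Th_{\Sigma_1}(T_0)$, that is, a $\Sigma_1$ sentence with $T_0 \vdash \varphi$. The direction $(1 \Rightarrow 2)$ of Proposition \ref{Sigma1_4}, applied to $\tau_0$ and $T_0$, yields $\chi^\circ \to \varphi^\circ \in \QPL_{\tau_0}(T_0)$. By the inclusion hypothesis this same modal sentence lies in $\QPL_{\tau_1}(T_1)$, and now the direction $(2 \Rightarrow 1)$ of Proposition \ref{Sigma1_4}, this time applied to $\tau_1$ and $T_1$, gives $T_1 \vdash \varphi$. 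Since $\varphi$ is $\Sigma_1$, this means $\varphi \in \Th_{\Sigma_1}(T_1)$, establishing $\Th_{\Sigma_1}(T_0) \subseteq \Th_{\Sigma_1}(T_1)$.

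Clause 2 then follows immediately: if $\QPL_{\tau_0}(T_0) = \QPL_{\tau_1}(T_1)$, then both inclusions between the logics hold, so two applications of Clause 1 with the roles of the two theories interchanged give $\Th_{\Sigma_1}(T_0) \subseteq \Th_{\Sigma_1}(T_1)$ and $\Th_{\Sigma_1}(T_1) \subseteq \Th_{\Sigma_1}(T_0)$, hence equality.

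I do not expect a genuine obstacle here, as the argument is a transcription of the proof of Corollary \ref{VDCor1} under the substitution of Proposition \ref{Sigma1_4} for Fact \ref{VD}; the only point deserving care is the bookkeeping of which of the two theories each direction of Proposition \ref{Sigma1_4} is applied to. All of the substantive work has already been carried out in establishing Proposition \ref{Sigma1_4}, which itself rests on Fact \ref{BooSigma_1}, so that no appeal to $\Sigma_1$-soundness is needed at this stage.
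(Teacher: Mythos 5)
Your proposal is correct and is exactly the proof the paper intends: the paper omits the argument, remarking only that Corollary \ref{Sigma1_5} follows ``by a similar proof'' to that of Corollary \ref{VDCor1} with Proposition \ref{Sigma1_4} in place of Fact \ref{VD}, which is precisely your transcription. Your observation that the modal sentence $\chi^\circ \to \varphi^\circ$ depends only on $\varphi$ and not on $(\tau, T)$ is indeed the point that makes the transfer across the inclusion work, and your bookkeeping of which theory each direction of Proposition \ref{Sigma1_4} is applied to is accurate.
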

%\begin{proof}
%1. Suppose $\QPL_{\tau_0}(T_0) \subseteq \QPL_{\tau_1}(T_1)$. 
%Let $\varphi$ be any $\Sigma_1$ sentence with $T_0 \vdash \varphi$. 
%Then by Proposition \ref{Sigma1_4}, $\chi^\circ \to \varphi^\circ \in \QPL_{\tau_0}(T_0)$. 
%By the supposition, $\chi^\circ \to \varphi^\circ \in \QPL_{\tau_1}(T_1)$. 
%By Proposition \ref{Sigma1_4} again, we obtain $T_1 \vdash \varphi$. 

%2 is immediate from clause 1. 
%\end{proof}

By applying Fact \ref{BooSigma_1}, Corollary \ref{VDCor3} is strengthened as follows. 

\begin{prop}\label{Basic1}
If $\QPL_{\tau_0}(T_0) \subseteq \QPL_{\tau_1}(T_1)$, then $T_1 \vdash \Con_{\tau_0} \leftrightarrow \Con_{\tau_1}$. 
\end{prop}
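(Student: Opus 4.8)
The plan is to split the biconditional into its two implications, $T_1 \vdash \Con_{\tau_1} \to \Con_{\tau_0}$ and $T_1 \vdash \Con_{\tau_0} \to \Con_{\tau_1}$, and to treat them by quite different means. The first is routine. First I would observe that $T_0 + \Con_{\tau_0} \vdash \Con_{\tau_0}$ trivially, since $\Con_{\tau_0}$ is literally an axiom of that theory. Corollary~\ref{VDCor1}.1 turns the hypothesis $\QPL_{\tau_0}(T_0) \subseteq \QPL_{\tau_1}(T_1)$ into $\Th(T_0 + \Con_{\tau_0}) \subseteq \Th(T_1 + \Con_{\tau_1})$, whence $\Con_{\tau_0} \in \Th(T_1 + \Con_{\tau_1})$, that is, $T_1 + \Con_{\tau_1} \vdash \Con_{\tau_0}$; discharging the added axiom gives $T_1 \vdash \Con_{\tau_1} \to \Con_{\tau_0}$. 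This reuses nothing beyond Fact~\ref{VD}, exactly as in Corollary~\ref{VDCor3}.

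For the converse $T_1 \vdash \Con_{\tau_0} \to \Con_{\tau_1}$ I would pass to the contrapositive and aim at $T_1 + \neg \Con_{\tau_1} \vdash \neg \Con_{\tau_0}$, noting that the target $\neg \Con_{\tau_0}$, namely $\PR_{\tau_0}(\gdl{0 = \overline{1}})$, is $\Sigma_1$ and hence of exactly the shape the $\Sigma_1$ machinery addresses. The decisive new input, unavailable when Corollary~\ref{VDCor3} was proved (where the reverse inclusion and $\Th(T_0) \subseteq \Th(T_1)$ had to be assumed instead), is Corollary~\ref{Sigma1_5}.1, which rests on Fact~\ref{BooSigma_1} and yields $\Th_{\Sigma_1}(T_0) \subseteq \Th_{\Sigma_1}(T_1)$. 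To combine this with the first half I would formalize the latter: from $T_1 + \Con_{\tau_1} \vdash \Con_{\tau_0}$, the formalized deduction theorem and Fact~\ref{DC}.1 give $\IS \vdash \PR_{\tau_1}(\gdl{\Con_{\tau_1} \to \Con_{\tau_0}})$, and then Fact~\ref{DC}.2 together with the formalized second incompleteness theorem for $\tau_1$, in the form $\IS \vdash \PR_{\tau_1}(\gdl{\Con_{\tau_1}}) \leftrightarrow \PR_{\tau_1}(\gdl{0 = \overline{1}})$, converts this into arithmetized provability information about $\PR_{\tau_1}$ that I would feed the $\Sigma_1$ transfer into, invoking $\Sigma_1$-completeness (Fact~\ref{DC}.3) on $\neg\Con_{\tau_0}$.

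The hard part will be bridging the gap between $\Sigma_1$-conservativity, which speaks about what $T_0$ and $T_1$ prove outright, and the sentence $\neg \Con_{\tau_0}$ that must be derived inside the extension $T_1 + \neg \Con_{\tau_1}$: the naive formalization of the first half pushes toward $\PR_{\tau_1}(\gdl{\Con_{\tau_0}})$ rather than toward $\neg \Con_{\tau_0}$, and closing this ``reflection gap'' is precisely what separates plain $\QPL_\tau$ from its $\Sigma_1$ variant. I expect the decisive step to be a careful use of Fact~\ref{BooSigma_1} through a suitably chosen arithmetical interpretation---one internalizing $\Con_{\tau_0}$ by means of the relational predicates $P_K$, so that the $\Sigma_1$-completeness built into Fact~\ref{BooSigma_1} can be brought to bear---rather than any manipulation of consistency statements alone. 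The latter cannot suffice: a Feferman-style definition $\tau_0$ with $T_0 \vdash \Con_{\tau_0}$ satisfies both $T_1 \vdash \Con_{\tau_1} \to \Con_{\tau_0}$ and $\Th_{\Sigma_1}(T_0) \subseteq \Th_{\Sigma_1}(T_1)$ while violating $T_1 \vdash \Con_{\tau_0} \to \Con_{\tau_1}$, so (consistently with the proposition, whose inclusion hypothesis such a $\tau_0$ fails) these two consequences do not force the conclusion, and genuinely more of the modal inclusion must be used.
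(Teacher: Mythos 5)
Your first half is exactly the paper's: $T_1 \vdash \Con_{\tau_1} \to \Con_{\tau_0}$ follows from Corollary \ref{VDCor1}.1 since $T_0 + \Con_{\tau_0} \vdash \Con_{\tau_0}$. The second half, however, is a plan rather than a proof, and the concrete steps you do write down are a dead end. The chain ``formalized deduction theorem $+$ Fact \ref{DC}.1 $+$ formalized second incompleteness $+$ Fact \ref{DC}.2'' only produces statements of the form $T_1 + \neg\Con_{\tau_1} \vdash \PR_{\tau_1}(\gdl{\cdots})$, which are vacuous: under $\neg\Con_{\tau_1}$ every sentence is $\tau_1$-provable. Likewise Fact \ref{DC}.3 applied to $\neg\Con_{\tau_0}$ lets you pass \emph{from} $\neg\Con_{\tau_0}$ to its provability, which is the wrong direction, and Corollary \ref{Sigma1_5}.1 speaks only about outright theorems of $T_0$ and $T_1$ (you concede this yourself). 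Indeed no argument from these ingredients alone can work: taking $T_0 = T_1$ $\Sigma_1$-sound, $\tau_0$ standard, and $\tau_1$ a $\Sigma_1$ definition with $T_0 \vdash \Con_{\tau_1} \leftrightarrow \Con_{\tau_0 + \Con_{\tau_0}}$, one has $\Th_{\Sigma_1}(T_0) \subseteq \Th_{\Sigma_1}(T_1)$ and $T_1 \vdash \Con_{\tau_1} \to \Con_{\tau_0}$, yet $T_1 \nvdash \Con_{\tau_0} \to \Con_{\tau_1}$. What is missing is the single step that constitutes the paper's proof: apply Fact \ref{BooSigma_1} to the $\Sigma_1$ sentence $\neg\Con_{\tau_0}$ to get, for \emph{every} arithmetical interpretation $f$, $\IS \vdash f_{\tau_0}(\chi^\circ) \to (\neg\Con_{\tau_0} \to f_{\tau_0}(\neg\Con_{\tau_0}^\circ))$; then observe that $f_{\tau_0}(\Box\bot)$ literally \emph{is} $\neg\Con_{\tau_0}$, so $T_0 \vdash f_{\tau_0}(\chi^\circ \land \Box\bot \to \neg\Con_{\tau_0}^\circ)$, i.e.\ the modal sentence $\chi^\circ \land \Box\bot \to \neg\Con_{\tau_0}^\circ$ lies in $\QPL_{\tau_0}(T_0)$ and hence, by hypothesis, in $\QPL_{\tau_1}(T_1)$. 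Evaluating it under a \emph{natural} interpretation (Proposition \ref{Natural}), $\Box\bot$ now denotes $\neg\Con_{\tau_1}$ while $\neg\Con_{\tau_0}^\circ$ still denotes $\neg\Con_{\tau_0}$, giving $T_1 \vdash \neg\Con_{\tau_1} \to \neg\Con_{\tau_0}$. The entire content of the proof is this shift in the reference of $\Box\bot$ between the two readings of one fixed modal sentence; you gesture at ``internalizing $\Con_{\tau_0}$ via the $P_K$,'' which is the right instinct, but you never identify the sentence to be transferred, and no ``suitably chosen'' nonstandard interpretation is needed --- membership in $\QPL_{\tau_0}(T_0)$ requires all $f$, and the extraction on the $\tau_1$ side uses only the natural one.

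A secondary error: your closing counterexample, ``a Feferman-style definition $\tau_0$ with $T_0 \vdash \Con_{\tau_0}$,'' does not exist in this framework. Here $\tau_0$ must be a $\Sigma_1$ definition, so $\PR_{\tau_0}$ satisfies the derivability conditions of Fact \ref{DC}, and L\"ob's theorem then shows that $T_0 \vdash \Con_{\tau_0}$ forces $T_0$ to be inconsistent; Feferman-style predicates escape the second incompleteness theorem precisely by not being of this form. The separation you wanted to illustrate is correct, but it is witnessed by the example given above (two $\Sigma_1$ definitions of one theory with non-equivalent consistency statements), not by a provably consistent definition.
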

\begin{proof}
Suppose $\QPL_{\tau_0}(T_0) \subseteq \QPL_{\tau_1}(T_1)$. 
Then, $T_1 \vdash \Con_{\tau_1} \to \Con_{\tau_0}$ by Corollary \ref{VDCor1}.1, and so it suffices to prove $T_1 \vdash \Con_{\tau_0} \to \Con_{\tau_1}$. 
Let $f$ be any arithmetical interpretation. 
Since $\neg \Con_{\tau_0}$ is a $\Sigma_1$ sentence, by Fact \ref{BooSigma_1}, 
\[
	\IS \vdash f_{\tau_0}(\chi^\circ) \to (\neg \Con_{\tau_0} \to f_{\tau_0}(\neg \Con_{\tau_0}^\circ)). 
\]
Hence $T_0 \vdash f_{\tau_0}(\chi^\circ \land \Box \bot \to \neg \Con_{\tau_0}^\circ)$, and thus $\chi^\circ \land \Box \bot \to \neg \Con_{\tau_0}^\circ$ is in $\QPL_{\tau_0}(T_0)$. 
From the supposition, $\chi^\circ \land \Box \bot \to \neg \Con_{\tau_0}^\circ \in \QPL_{\tau_1}(T_1)$. 
By considering a natural arithmetical interpretation, we obtain that $T_1$ proves $\neg \Con_{\tau_1} \to \neg \Con_{\tau_0}$. 
Therefore $T_1 \vdash \Con_{\tau_0} \to \Con_{\tau_1}$. 
\end{proof}

\begin{cor}\label{PrCon}
If $T_1$ is consistent and $T_1 \vdash \Con_{\tau_0}$, then $\QPL_{\tau_0}(T_0) \nsubseteq \QPL_{\tau_1}(T_1)$. 
\end{cor}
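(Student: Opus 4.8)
The plan is to argue by contradiction, turning the hypotheses into a violation of Gödel's second incompleteness theorem. So I would begin by supposing, toward a contradiction, that the inclusion $\QPL_{\tau_0}(T_0) \subseteq \QPL_{\tau_1}(T_1)$ does hold. The whole engine of the argument is Proposition \ref{Basic1}, which under exactly this hypothesis yields $T_1 \vdash \Con_{\tau_0} \leftrightarrow \Con_{\tau_1}$.

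Next I would combine this provable equivalence with the standing hypothesis $T_1 \vdash \Con_{\tau_0}$ to conclude $T_1 \vdash \Con_{\tau_1}$. Since $\tau_1(v)$ is a $\Sigma_1$ definition of $T_1$ and the associated $\PR_{\tau_1}(x)$ satisfies the derivability conditions of Fact \ref{DC}, the sentence $\Con_{\tau_1}$ is a genuine consistency statement for $T_1$ to which the second incompleteness theorem applies. Hence $T_1 \vdash \Con_{\tau_1}$ forces $T_1$ to be inconsistent, contradicting the assumed consistency of $T_1$. This contradiction establishes $\QPL_{\tau_0}(T_0) \nsubseteq \QPL_{\tau_1}(T_1)$, as desired.

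The argument is short precisely because all of the real work has been packaged into Proposition \ref{Basic1}; the only point deserving a word of care is the legitimacy of invoking the second incompleteness theorem, but this is immediate because $\PR_{\tau_1}$ is a standard $\Sigma_1$ provability predicate satisfying Fact \ref{DC}. I therefore do not anticipate any genuine obstacle. Conceptually, the corollary records the intuitive observation that an inclusion of quantified provability logics cannot hold when the target theory $T_1$ already proves the consistency of the source: by Proposition \ref{Basic1} such an inclusion would compel $T_1$ to prove its own consistency, which a consistent $T_1$ cannot do.
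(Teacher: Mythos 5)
Your proposal is correct and follows essentially the same route as the paper's proof: both argue by contradiction, invoke Proposition \ref{Basic1} to obtain $T_1 \vdash \Con_{\tau_0} \leftrightarrow \Con_{\tau_1}$, derive $T_1 \vdash \Con_{\tau_1}$ from the hypothesis $T_1 \vdash \Con_{\tau_0}$, and then contradict G\"odel's second incompleteness theorem using the consistency of $T_1$. Your added remark on why the second incompleteness theorem applies to $\PR_{\tau_1}$ is a reasonable point of care but does not change the argument.
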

\begin{proof}
Assume that $T_1$ is consistent and $T_1 \vdash \Con_{\tau_0}$. 
If $\QPL_{\tau_0}(T_0) \subseteq \QPL_{\tau_1}(T_1)$, then by Proposition \ref{Basic1}, $T_1 \vdash \Con_{\tau_0} \leftrightarrow \Con_{\tau_1}$. 
From the supposition, $T_1 \vdash \Con_{\tau_1}$ and this contradicts G\"odel's second incompleteness theorem. 
Therefore we get $\QPL_{\tau_0}(T_0) \nsubseteq \QPL_{\tau_1}(T_1)$. 
\end{proof}

The following corollary is a refinement of the result of Artemov \cite{Art86}. 

\begin{cor}
Suppose that $T$ is $\Sigma_1$-sound. 
Then, for any $\Sigma_1$ definition $\tau(v)$ of $T$, there exists a $\Sigma_1$ definition $\tau'(v)$ of $T$ such that $\QPL_{\tau}(T) \nsubseteq \QPL_{\tau'}(T)$ and $\QPL_{\tau'}(T) \nsubseteq \QPL_{\tau}(T)$. 
\end{cor}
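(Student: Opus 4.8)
The plan is to deduce both non-inclusions at once from Proposition \ref{Basic1}. Applying that proposition with $T_0 = T_1 = T$ to each of the two candidate inclusions, I observe that if either $\QPL_\tau(T) \subseteq \QPL_{\tau'}(T)$ or $\QPL_{\tau'}(T) \subseteq \QPL_\tau(T)$ holds, then in both cases $T \vdash \Con_\tau \leftrightarrow \Con_{\tau'}$. Consequently, it suffices to construct a single $\Sigma_1$ definition $\tau'(v)$ of $T$ for which $T \nvdash \Con_\tau \leftrightarrow \Con_{\tau'}$: for such a $\tau'$, neither inclusion can hold, which is exactly what we want.

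To produce such a $\tau'$, I would split into two cases according to the behaviour of the given definition $\tau$. Since $T$ is $\Sigma_1$-sound it is consistent, so G\"odel's second incompleteness theorem applies to any definition of $T$ whose provability predicate satisfies the derivability conditions of Fact \ref{DC}. In the case $T \nvdash \Con_\tau$, I would invoke Feferman's theorem (see Lindstr\"om \cite{Lin03}) to obtain a $\Sigma_1$ definition $\tau'(v)$ of $T$ such that $T \vdash \Con_{\tau'}$; then $T \nvdash \Con_\tau \leftrightarrow \Con_{\tau'}$, since otherwise $T$ would prove $\Con_\tau$, contrary to the case assumption. In the remaining case $T \vdash \Con_\tau$, I would instead take $\tau'(v)$ to be the canonical $\Sigma_1$ definition of $T$, for which G\"odel's second incompleteness theorem yields $T \nvdash \Con_{\tau'}$; again $T \nvdash \Con_\tau \leftrightarrow \Con_{\tau'}$, for otherwise $T$ would prove $\Con_{\tau'}$. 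In either case the required $\tau'$ is obtained, completing the argument.

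The technical heart of the proof is Feferman's construction of a $\Sigma_1$ definition of $T$ whose consistency is provable in $T$: the naive idea of intersecting $\tau$ with a bounded search for a $\tau$-inconsistency fails, because the code of a hypothetical inconsistency proof exceeds the G\"odel numbers of the axioms it uses, and one must instead use a self-referential cut-off defined via the fixed-point lemma. The only other subtlety I anticipate is the need for the case distinction itself: one cannot uniformly pair $\tau$ with a consistency-provable $\tau'$, since for a pathological $\tau$ already satisfying $T \vdash \Con_\tau$ the biconditional $\Con_\tau \leftrightarrow \Con_{\tau'}$ would become provable. Splitting on whether $T \vdash \Con_\tau$ resolves this, using Feferman's theorem in one case and G\"odel's theorem in the other.
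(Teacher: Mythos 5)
Your opening reduction is correct and is exactly the paper's: applying Proposition \ref{Basic1} with $T_0 = T_1 = T$ to either inclusion yields $T \vdash \Con_\tau \leftrightarrow \Con_{\tau'}$, so it suffices to find a $\Sigma_1$ definition $\tau'(v)$ of $T$ with $T \nvdash \Con_\tau \leftrightarrow \Con_{\tau'}$. The gap is in how you construct $\tau'$. In Case 1 you invoke Feferman's theorem to obtain a \emph{$\Sigma_1$} definition $\tau'(v)$ of $T$ with $T \vdash \Con_{\tau'}$. No such definition exists when $T$ is consistent: as you yourself observe, G\"odel's second incompleteness theorem (via L\"ob's theorem) applies to every provability predicate satisfying the derivability conditions of Fact \ref{DC}, and Fact \ref{DC} holds for \emph{every} $\Sigma_1$ definition of $T$ --- that is precisely the standing convention of the paper. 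Hence $T \nvdash \Con_{\tau'}$ for every $\Sigma_1$ definition $\tau'$ of a consistent $T$. Feferman's construction evades the second incompleteness theorem only because his numeration (the conjunction of $\tau(v)$ with a $\Pi_1$ consistency clause bounding the axioms used) is \emph{not} $\Sigma_1$, so provable $\Sigma_1$-completeness (Fact \ref{DC}.3) fails for the resulting predicate; it cannot be used here, where $\tau'$ is required to be $\Sigma_1$. For the same reason your Case 2 is vacuous: its hypothesis $T \vdash \Con_\tau$ can never hold for a $\Sigma_1$ definition $\tau$ of a consistent theory. So the case split collapses entirely, and the proof does not go through.

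The repair --- which is what the paper does --- is not to make one of the two consistency statements provable (impossible for $\Sigma_1$ definitions), but to make $\Con_{\tau'}$ provably equivalent to a true $\Pi_1$ sentence whose implication \emph{from} $\Con_\tau$ is refutable by L\"ob's theorem. Concretely: since $\neg\Con_\tau$ is $\Sigma_1$, Fact \ref{DC}.3 gives $T \vdash \Con_{\tau+\Con_\tau} \to \Con_\tau$; by $\Sigma_1$-soundness, $\Con_{\tau+\Con_\tau}$ is a true $\Pi_1$ sentence, and the theorem of Lindstr\"om cited in the paper then yields a $\Sigma_1$ definition $\tau'(v)$ of $T$ with $T \vdash \Con_{\tau'} \leftrightarrow \Con_{\tau+\Con_\tau}$. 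If $T \vdash \Con_\tau \to \Con_{\tau'}$, then $T \vdash \PR_\tau(\gdl{\neg\Con_\tau}) \to \neg\Con_\tau$, so L\"ob's theorem gives $T \vdash \neg\Con_\tau$, contradicting $\Sigma_1$-soundness; hence $T \nvdash \Con_\tau \leftrightarrow \Con_{\tau'}$, and your (correct) reduction finishes the argument. Note that in this argument neither $\Con_\tau$ nor $\Con_{\tau'}$ is provable or refutable in $T$; the non-equivalence is obtained by breaking a single implication, which is the idea your proposal is missing.
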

\begin{proof}
Let $\tau(v)$ be any $\Sigma_1$ definition of $T$. 
Since $\neg \Con_\tau$ is $\Sigma_1$, by Fact \ref{DC}.3, $T \vdash \neg \Con_\tau \to \PR_{\tau}(\gdl{\neg \Con_\tau})$. 
Equivalently, $T \vdash \Con_{\tau + \Con_\tau} \to \Con_\tau$. 
Since $T$ is $\Sigma_1$-sound, $\Con_{\tau + \Con_\tau}$ is a true $\Pi_1$ sentence. 
Then, it is known that there exists a $\Sigma_1$ definition $\tau'(v)$ of $T$ such that $T \vdash \Con_{\tau'} \leftrightarrow \Con_{\tau + \Con_\tau}$ (cf.~Lindstr\"om \cite[Theorem 2.8.(b)]{Lin03}). 

Suppose, towards a contradiction, $T \vdash \Con_\tau \to \Con_{\tau'}$. 
Then, $T$ proves $\Con_\tau \to \Con_{\tau + \Con_\tau}$ and $\PR_{\tau}(\gdl{\neg \Con_\tau}) \to \neg \Con_\tau$. 
By L\"ob's theorem, $T$ also proves $\neg \Con_\tau$. 
This contradicts the $\Sigma_1$-soundness of $T$. 
Thus $T \nvdash \Con_\tau \to \Con_{\tau'}$. 

Moreover, $T \nvdash \Con_{\tau} \leftrightarrow \Con_{\tau'}$. 
It follows from Proposition \ref{Basic1} that $\QPL_{\tau}(T) \nsubseteq \QPL_{\tau'}(T)$ and $\QPL_{\tau'}(T) \nsubseteq \QPL_{\tau}(T)$. 
\end{proof}

\subsection{On provable equivalences of provability predicates}

In this subsection, we investigate further consequences of inclusions between quantified provability logics via Artemov's Lemma. 
In particular, we show that some provable equivalences of provability predicates are derived from inclusion. 
First, we prepare the following lemma. 

\begin{lem}\label{SelfProver1}
Let $f$ be any arithmetical interpretation. 
\begin{enumerate}
	\item $\PA \vdash f_\tau(\D) \to (R_f(x, y) \to \PR_{\tau}(\gdl{R_f(\dot{x}, \dot{y})}))$; 
	\item If $f(P_Z(x))$ and $f(P_S(x, y))$ are $\Sigma_1$ formulas, then $\IS \vdash R_f(x, y) \to \PR_{\tau}(\gdl{R_f(\dot{x}, \dot{y})})$. 
\end{enumerate}
\end{lem}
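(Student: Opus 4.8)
The plan is to handle the two clauses by different mechanisms: Clause 1 by an external induction that uses $f_\tau(\D)$ to convert truth of the atomic formulas into $\tau$-provability, and Clause 2 by observing that the extra hypothesis makes $R_f$ itself $\Sigma_1$, so that provable $\Sigma_1$-completeness applies directly. For Clause 1, I would reason in $\PA$, assume $f_\tau(\D)$, and prove $\forall x \forall y(R_f(x,y) \to \PR_\tau(\gdl{R_f(\dot{x},\dot{y})}))$ by induction on $x$. The role of $f_\tau(\D)$ is exactly its $P_Z$- and $P_S$-conjuncts: whenever $f(P_Z(a))$ holds I obtain $\PR_\tau(\gdl{f(P_Z(\overline{a}))})$, and whenever $f(P_S(a,b))$ holds I obtain $\PR_\tau(\gdl{f(P_S(\overline{a},\overline{b}))})$. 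In the base case, $\IS$ proves $R_f(0,y) \leftrightarrow f(P_Z(y))$ (the length-one sequence coding $y$ being the only relevant witness), so from $f(P_Z(y))$ and $f_\tau(\D)$ I get $\PR_\tau(\gdl{f(P_Z(\dot{y}))})$, and composing with the $\tau$-provable equivalence $R_f(\overline{0},\dot{y}) \leftrightarrow f(P_Z(\dot{y}))$ (available by Fact \ref{DC}.1, since the equivalence is provable in $\IS \subseteq T$) and Fact \ref{DC}.2 yields $\PR_\tau(\gdl{R_f(\overline{0},\dot{y})})$.

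For the induction step, given a witness $s$ for $R_f(x+1,y)$ I set $w = (s)_x$; truncating $s$ shows $R_f(x,w)$, and reading off the last successor step gives $f(P_S(w,y))$. The induction hypothesis supplies $\PR_\tau(\gdl{R_f(\dot{x},\dot{w})})$, while $f_\tau(\D)$ supplies $\PR_\tau(\gdl{f(P_S(\dot{w},\dot{y}))})$. Since $\IS$ proves the sequence-extension fact $\forall x\forall u\forall v(R_f(x,u) \land f(P_S(u,v)) \to R_f(x+1,v))$, Fact \ref{DC}.1 internalizes it and Fact \ref{DC}.2 lets me discharge the two hypotheses, giving the required conclusion at $x+1$. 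I expect the main work here to be bookkeeping: keeping the dotted substitution terms straight and verifying that each auxiliary implication is genuinely provable in $\IS$ so that Fact \ref{DC}.1 is applicable. It is worth recording why full induction is used: the induction formula contains $R_f$ negatively and so is not $\Sigma_1$, whence $\Sigma_1$-induction does not obviously suffice — this is the reason the base theory is $\PA$ in this clause.

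For Clause 2, the hypothesis that $f(P_Z(x))$ and $f(P_S(x,y))$ are $\Sigma_1$ removes the need for both $f_\tau(\D)$ and the induction. Under it, $R_f(x,y)$ is provably $\Sigma_1$ over $\IS$: the matrix is a conjunction of the $\Delta_0$ formulas $\mathrm{Seq}(s)$, $lh(s)=x+1$, $(s)_x=y$ with the $\Sigma_1$ formula $f(P_Z((s)_0))$ and the bounded formula $\forall z<x\,f(P_S((s)_z,(s)_{z+1}))$, the latter being equivalent to a $\Sigma_1$ formula by the $\Sigma_1$-collection available in $\IS$, and $\exists s$ of a $\Sigma_1$ formula is again $\Sigma_1$. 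Once $R_f$ is seen to be (provably) $\Sigma_1$, Fact \ref{DC}.3 applies and yields $\IS \vdash R_f(x,y) \to \PR_\tau(\gdl{R_f(\dot{x},\dot{y})})$; the only delicate point is the routine transfer of provable $\Sigma_1$-completeness from the literal $\Sigma_1$ formula back to $R_f$ itself, which is handled by Facts \ref{DC}.1 and \ref{DC}.2.
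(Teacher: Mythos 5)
Your proof is correct, and both clauses end where the paper's do, but your Clause 1 runs the induction on a different formula. The paper never touches the witness sequence: it observes that formulas $\varphi$ with $\PA + f_\tau(\D) \vdash \varphi \to \PR_\tau(\gdl{\varphi})$ are closed under conjunction and existential quantification, disposes of the $\Delta_0$ conjuncts $\mathrm{Seq}(s)$, $lh(s)=x+1$, $(s)_x=y$ by Fact \ref{DC}.3 and of $f_\tau(P_Z((s)_0))$ by $f_\tau(\D)$, and then isolates the bounded universal conjunct $\forall z<x\, f_\tau(P_S((s)_z,(s)_{z+1}))$ as the only problematic piece, proving its self-provability by induction on $x$ with $s$ held fixed as a free variable. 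You instead induct on $x$ in the full statement $\forall y(R_f(x,y) \to \PR_\tau(\gdl{R_f(\dot{x},\dot{y})}))$, which forces you to manipulate witnesses: truncation of $s$ to invoke the induction hypothesis at $w=(s)_x$, and an internalized $\IS$-provable extension lemma $R_f(x,u) \land f(P_S(u,v)) \to R_f(x+1,v)$ to reassemble the conclusion under the provability predicate. Both routes use $f_\tau(\D)$ for exactly the same two conjuncts, and both need full non-$\Sigma_1$ induction, which is why $\PA$ appears in Clause 1 (a point you correctly flag). The paper's arrangement buys freedom from sequence combinatorics, since the same $s$ serves inside and outside the box; yours keeps the induction statement intrinsic to $R_f$ at the cost of the truncation/extension bookkeeping. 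Your Clause 2 is the paper's argument ($R_f$ becomes $\Sigma_1$, apply Fact \ref{DC}.3), stated with more care about why the bounded universal quantifier stays $\Sigma_1$ (collection in $\IS$) and about transferring provable $\Sigma_1$-completeness across a provable equivalence, points the paper leaves implicit.
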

\begin{proof}
1. By the definition of $\D$, $f_\tau(P_Z(x)) \to \PR_\tau(\gdl{f_\tau(P_Z(\dot{x}))})$ and $f_\tau(P_S(x, y)) \to \PR_\tau(\gdl{f_\tau(P_S(\dot{x}, \dot{y}))})$ are provable in $\PA + f_\tau(\D)$. 
Also if $\PA \vdash \varphi_0 \to \PR_\tau(\gdl{\varphi_0})$ and $\PA \vdash \varphi_1 \to \PR_\tau(\gdl{\varphi_1})$, then $\PA \vdash \varphi_0 \land \varphi_1 \to \PR_{\tau}(\gdl{\varphi_0 \land \varphi_1})$ and $T \vdash \exists s \varphi_0 \to \PR_\tau(\gdl{\exists s \varphi_0})$. 
Thus it suffices to show that $\PA + f_\tau(\D)$ proves
\[
	 \forall z < x \ f_\tau(P_S((s)_z, (s)_{z+1})) \to \PR_\tau(\gdl{\forall z < \dot{x} \ f_\tau(P_S((\dot{s})_z, (\dot{s})_{z+1}))}). 
\]
Let $\psi(x)$ denote this formula. 
Since $T \vdash \forall z < 0 \ f_\tau(P_S((s)_z, (s)_{z+1}))$, by Fact \ref{DC}.1, $\PA \vdash \PR_\tau(\gdl{\forall z < 0 \ f_\tau(P_S((\dot{s})_z, (\dot{s})_{z+1}))})$. 
Thus $\PA \vdash \psi(0)$. 
Also $\PA + f_\tau(\D)$ proves
\begin{align*}
	 \psi(x) & \land \forall z < S(x) \ f_\tau(P_S((s)_z, (s)_{z+1}))\\
	& \qquad \to \forall z < x \ f_\tau(P_S((s)_z, (s)_{z+1})) \land f_\tau(P_S((s)_x, (s)_{x+1})),\\
	& \qquad \to \PR_\tau(\gdl{\forall z < \dot{x}\  f_\tau(P_S((\dot{s})_z, (\dot{s})_{z+1})) \land f_\tau(P_S((\dot{s})_{\dot{x}}, (\dot{s})_{\dot{x}+1}))}),\\
	& \qquad \to \PR_\tau(\gdl{\forall z < S(\dot{x})\  f_\tau(P_S((\dot{s})_z, (\dot{s})_{z+1}))}).
\end{align*}
Hence $\PA + f_\tau(\D) \vdash \psi(x) \to \psi(S(x))$, and by the induction axiom, we conclude $\PA + f_\tau(\D) \vdash \forall x \psi(x)$. 

2. If $f(P_Z(x))$ and $f(P_S(x, y))$ are $\Sigma_1$ formulas, then $R_f(x, y)$ is also a $\Sigma_1$ formula. 
Then the statement follows from Fact \ref{DC}.3. 
\end{proof}

We are ready to prove one of our main theorem of this subsection. 

\begin{thm}\label{IPPL1}
Suppose $\Th(\PA) \subseteq \Th(T_0)$. 
If $\QPL_{\tau_0}(T_0) \subseteq \QPL_{\tau_1}(T_1)$, then for any $\mathcal{L}_A$-formula $\varphi(\vec{y})$, 
\[
	T_1 \vdash \forall \vec{y} \left(\PR_{\tau_0}(\gdl{\Con_{\tau_0} \to \varphi(\vec{\dot{y}})}) \leftrightarrow \PR_{\tau_1}(\gdl{\Con_{\tau_1} \to \varphi(\vec{\dot{y}})}) \right). 
\]
\end{thm}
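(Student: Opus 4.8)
The plan is to exhibit a single closed quantified modal sentence that lies in $\QPL_{\tau_0}(T_0)$ and whose natural $\tau_1$-interpretation delivers the desired provable equivalence over $T_1$. Write $\theta_i(\vec y)$ for the $\mathcal{L}_A$-formula $\PR_{\tau_i}(\gdl{\Con_{\tau_i} \to \varphi(\vec{\dot y})})$, and let $B(\vec y)$ be the modal formula $\Box(\Diamond \top \to \varphi^\circ(\vec y))$. The first observation is that under any natural interpretation $g$ for $\tau_i$ one has $\IS \vdash \forall \vec y(g_{\tau_i}(B(\vec y)) \leftrightarrow \theta_i(\vec y))$: indeed $g_{\tau_i}(\Diamond\top) = \Con_{\tau_i}$, and by Proposition \ref{Natural}.1 together with the derivability conditions (Fact \ref{DC}) the formula $g_{\tau_i}(\varphi^\circ)$ may be replaced by $\varphi$ inside the box. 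Thus the same modal formula $B$ produces $\theta_0$ and $\theta_1$ on the two sides, and the remaining task is to link $\theta_0$ and $B$ by a sentence provable under every $\tau_0$-interpretation.

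Define the closed modal sentence
\[
	S := \Diamond\top \land \boxdot\D \land \boxdot\chi^\circ \to \forall\vec y\bigl(\theta_0^\circ(\vec y) \leftrightarrow B(\vec y)\bigr),
\]
where $\theta_0^\circ$ is the relational translation of the fixed $\mathcal{L}_A$-formula $\theta_0$. I claim $S \in \QPL_{\tau_0}(T_0)$. To see this, fix an arbitrary arithmetical interpretation $f$, reason in $T_0$, and assume the antecedent $\Con_{\tau_0} \land f_{\tau_0}(\boxdot\D) \land f_{\tau_0}(\boxdot\chi^\circ)$. Given $\vec y$, Fact \ref{BooSurj} supplies $\vec x$ with $R_f(\vec x, \vec y)$, and Artemov's Lemma (Fact \ref{AL}) applied to $\theta_0$ gives $\theta_0(\vec x) \leftrightarrow f_{\tau_0}(\theta_0^\circ(\vec y))$. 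Since $f_{\tau_0}(B(\vec y))$ unwinds to $\PR_{\tau_0}(\gdl{\Con_{\tau_0} \to f_{\tau_0}(\varphi^\circ(\vec{\dot y}))})$ and $\theta_0(\vec x) = \PR_{\tau_0}(\gdl{\Con_{\tau_0} \to \varphi(\vec{\dot x})})$, it remains to prove the \emph{boxed} equivalence
\[
	\PR_{\tau_0}\bigl(\gdl{\Con_{\tau_0} \to (\varphi(\vec{\dot x}) \leftrightarrow f_{\tau_0}(\varphi^\circ(\vec{\dot y})))}\bigr),
\]
after which Fact \ref{DC}.2 transfers it through the two provability predicates.

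The heart of the argument, and the step I expect to be the main obstacle, is establishing this boxed equivalence, i.e.\ getting $\tau_0$ to verify Artemov's equivalence internally. Applying Fact \ref{DC}.1 to the $\IS$-instance of Artemov's Lemma yields $\PR_{\tau_0}$ of the implication $\Con_{\tau_0} \land f_{\tau_0}(\D) \land f_{\tau_0}(\chi^\circ) \land R_f(\vec{\dot x}, \vec{\dot y}) \to (\varphi(\vec{\dot x}) \leftrightarrow f_{\tau_0}(\varphi^\circ(\vec{\dot y})))$. I then discharge the three boxed hypotheses: $\PR_{\tau_0}(\gdl{f_{\tau_0}(\D)})$ and $\PR_{\tau_0}(\gdl{f_{\tau_0}(\chi^\circ)})$ come from the conjuncts $f_{\tau_0}(\boxdot\D)$ and $f_{\tau_0}(\boxdot\chi^\circ)$ in the antecedent, while $\PR_{\tau_0}(\gdl{R_f(\vec{\dot x}, \vec{\dot y})})$ is exactly what Lemma \ref{SelfProver1}.1 provides from $R_f(\vec x, \vec y)$ and $f_{\tau_0}(\D)$. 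This last use of Lemma \ref{SelfProver1}.1 is where full induction is needed, and it is precisely the reason the hypothesis $\Th(\PA) \subseteq \Th(T_0)$ is assumed. Distributing with Fact \ref{DC}.2 then yields the boxed equivalence and hence $f_{\tau_0}(\theta_0^\circ(\vec y)) \leftrightarrow f_{\tau_0}(B(\vec y))$; as $\vec y$ is arbitrary and the antecedent was assumed, $T_0 \vdash f_{\tau_0}(S)$, so $S \in \QPL_{\tau_0}(T_0)$.

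Finally I transfer across the inclusion. From $S \in \QPL_{\tau_0}(T_0) \subseteq \QPL_{\tau_1}(T_1)$ and a natural interpretation $g$ for $\tau_1$ we get $T_1 \vdash g_{\tau_1}(S)$; by Proposition \ref{Natural}.2 the conjuncts $g_{\tau_1}(\boxdot\D)$ and $g_{\tau_1}(\boxdot\chi^\circ)$ are provable, so the antecedent collapses to $g_{\tau_1}(\Diamond\top) = \Con_{\tau_1}$, and using $g_{\tau_1}(\theta_0^\circ) \leftrightarrow \theta_0$ (Proposition \ref{Natural}.1) together with $g_{\tau_1}(B) \leftrightarrow \theta_1$ noted above this gives $T_1 \vdash \Con_{\tau_1} \to \forall\vec y(\theta_0(\vec y) \leftrightarrow \theta_1(\vec y))$. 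To remove the antecedent I treat the inconsistent case: $\neg\Con_{\tau_1}$ implies $\theta_1(\vec y)$ outright since an inconsistent theory proves everything, and it implies $\theta_0(\vec y)$ as well because Proposition \ref{Basic1} gives $T_1 \vdash \Con_{\tau_0} \leftrightarrow \Con_{\tau_1}$. Hence $T_1 \vdash \neg\Con_{\tau_1} \to \forall\vec y(\theta_0(\vec y) \leftrightarrow \theta_1(\vec y))$ too, and combining the two cases yields $T_1 \vdash \forall\vec y(\theta_0(\vec y) \leftrightarrow \theta_1(\vec y))$, which is the assertion of the theorem.
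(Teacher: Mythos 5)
Your proposal is correct and is essentially the paper's own proof: your sentence $S$ is, up to harmlessly moving $\forall \vec{y}$ across the closed antecedent, exactly the modal sentence $\forall \vec{y}\left(\Diamond\top \land \boxdot\D \land \boxdot\chi^\circ \to \left(\PR_{\tau_0}(\gdl{\Con_{\tau_0} \to \varphi(\vec{\dot{y}})})^\circ \leftrightarrow \Box(\Diamond\top \to \varphi^\circ(\vec{y}))\right)\right)$ that the paper places in $\QPL_{\tau_0}(T_0)$, and you establish membership by the same ingredients (Artemov's Lemma applied both to $\varphi$ and to $\theta_0$, Lemma \ref{SelfProver1}.1 to box $R_f$ --- which is indeed where $\Th(\PA)\subseteq\Th(T_0)$ enters --- Fact \ref{BooSurj} to eliminate $\vec{x}$, and the derivability conditions). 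The transfer step is also identical: inclusion, a natural interpretation with Proposition \ref{Natural} giving the equivalence under $\Con_{\tau_1}$, and the case $\neg\Con_{\tau_1}$ discharged via Proposition \ref{Basic1}.
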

\begin{proof}
Suppose $\Th(\PA) \subseteq \Th(T_0)$ and $\QPL_{\tau_0}(T_0) \subseteq \QPL_{\tau_1}(T_1)$. 
Let $f$ be any arithmetical interpretation. 
By Artemov's Lemma, 
\[
	\IS \vdash \Con_{\tau_0} \land f_{\tau_0}(\D) \land f_{\tau_0}(\chi^\circ) \land R_f(\vec{x}, \vec{y}) \to \left(\varphi(\vec{x}) \leftrightarrow f_{\tau_0}(\varphi^\circ(\vec{y})) \right). 
\]
Then $T_0$ proves
\[
	f_{\tau_0}(\D) \land f_{\tau_0}(\chi^\circ) \land R_f(\vec{x}, \vec{y}) \to \left((\Con_{\tau_0} \to \varphi(\vec{x})) \leftrightarrow (\Con_{\tau_0} \to f_{\tau_0}(\varphi^\circ(\vec{y}))) \right). 
\]
By Fact \ref{DC}, we have 
\begin{align}\label{eq1}
	\IS & \vdash f_{\tau_0}(\Box \D) \land f_{\tau_0}(\Box \chi^\circ) \land \PR_{\tau_0}(\gdl{R_f(\vec{\dot{x}}, \vec{\dot{y}})}) \notag \\
	& \quad \to \left( \PR_{\tau_0}(\gdl{\Con_{\tau_0} \to \varphi(\vec{\dot{x}})}) \leftrightarrow f_{\tau_0}(\Box (\Diamond \top \to \varphi^\circ(\vec{y}))) \right). 
\end{align}
By Artemov's Lemma again, 
\begin{align}\label{eq2}
	\IS & \vdash \Con_{\tau_0} \land f_{\tau_0}(\D) \land f_{\tau_0}(\chi^\circ) \land R_f(\vec{x}, \vec{y}) \notag\\
	 & \quad \to \left(\PR_{\tau_0}(\gdl{\Con_{\tau_0} \to \varphi(\vec{\dot{x}})}) \leftrightarrow f_{\tau_0}(\PR_{\tau_0}(\gdl{\Con_{\tau_0} \to \varphi(\vec{\dot{y}})})^\circ) \right). 
\end{align}
From Lemma \ref{SelfProver1}.1, $\PA + f_{\tau_0}(\D) \vdash R_f(\vec{x}, \vec{y}) \to \PR_{\tau_0}(\gdl{R_f(\vec{\dot{x}}, \vec{\dot{y}})})$. 
By combining this with (\ref{eq1}) and (\ref{eq2}), we obtain
\begin{align*}
	\PA & \vdash \Con_{\tau_0} \land f_{\tau_0}(\boxdot \D) \land f_{\tau_0}(\boxdot \chi^\circ) \land R_f(\vec{x}, \vec{y})\\
	& \quad \to \left(f_{\tau_0}(\PR_{\tau_0}(\gdl{\Con_{\tau_0} \to \varphi(\vec{\dot{y}})})^\circ) \leftrightarrow f_{\tau_0}(\Box (\Diamond \top \to \varphi^\circ(\vec{y})))\right). 
\end{align*}
Since $\vec{x}$ does not appear in the consequent of the formula, 
\begin{align*}
	\PA & \vdash \Con_{\tau_0} \land f_{\tau_0}(\boxdot \D) \land f_{\tau_0}(\boxdot \chi^\circ) \land \exists \vec{x} R_f(\vec{x}, \vec{y})\\
	& \quad \to \left(f_{\tau_0}(\PR_{\tau_0}(\gdl{\Con_{\tau_0} \to \varphi(\vec{\dot{y}})})^\circ) \leftrightarrow f_{\tau_0}(\Box (\Diamond \top \to \varphi^\circ(\vec{y})))\right). 
\end{align*}
From Fact \ref{BooSurj}, $\IS \vdash \Con_{\tau_0} \land f_{\tau_0}(\D) \land f_{\tau_0}(\chi^\circ) \to \forall \vec{y} \exists \vec{x} R_f(\vec{x}, \vec{y})$. 
Hence
\begin{align*}
	\PA & \vdash \Con_{\tau_0} \land f_{\tau_0}(\boxdot \D) \land f_{\tau_0}(\boxdot \chi^\circ) \\
	& \quad \to \left(f_{\tau_0}(\PR_{\tau_0}(\gdl{\Con_{\tau_0} \to \varphi(\vec{\dot{y}})})^\circ) \leftrightarrow f_{\tau_0}(\Box (\Diamond \top \to \varphi^\circ(\vec{y})))\right). 
\end{align*}
Since $\Th(\PA) \subseteq \Th(T_0)$, we obtain that the sentence
\[
	\forall \vec{y} \left(\Diamond \top \land \boxdot \D \land \boxdot \chi^\circ \to \left(\PR_{\tau_0}(\gdl{\Con_{\tau_0} \to \varphi(\vec{\dot{y}})})^\circ \leftrightarrow \Box (\Diamond \top \to \varphi^\circ(\vec{y}))\right) \right)
\]
is contained in $\QPL_{\tau_0}(T_0)$. 
By the supposition, this sentence is also in $\QPL_{\tau_1}(T_1)$.
By considering a natural arithmetical interpretation and by Proposition \ref{Natural}, 
\[
	T_1 + \Con_{\tau_1} \vdash \forall \vec{y} \left( \PR_{\tau_0}(\gdl{\Con_{\tau_0} \to \varphi(\vec{\dot{y}})}) \leftrightarrow \PR_{\tau_1}(\gdl{\Con_{\tau_1} \to \varphi(\vec{\dot{y}})}) \right).
\]
By Proposition \ref{Basic1}, $T_1 \vdash \Con_{\tau_0} \to \Con_{\tau_1}$. 
Thus $T_1 + \neg \Con_{\tau_1} \vdash \neg \Con_{\tau_0}$, and hence
\[
	T_1 + \neg \Con_{\tau_1} \vdash \forall \vec{y} \left( \PR_{\tau_0}(\gdl{\Con_{\tau_0} \to \varphi(\vec{\dot{y}})}) \leftrightarrow \PR_{\tau_1}(\gdl{\Con_{\tau_1} \to \varphi(\vec{\dot{y}})}) \right). 
\]
Therefore we conclude
\[
	T_1 \vdash \forall \vec{y} \left(\PR_{\tau_0}(\gdl{\Con_{\tau_0} \to \varphi(\vec{\dot{y}})}) \leftrightarrow \PR_{\tau_1}(\gdl{\Con_{\tau_1} \to \varphi(\vec{\dot{y}})}) \right).
\]
\end{proof}

In our proof of Theorem \ref{IPPL1}, Lemma \ref{SelfProver1} is used to replace the formula $\PR_{\tau_0}(\ulcorner R_f(\vec{\dot{x}}, \vec{\dot{y}}) \urcorner)$ with $R_f(\vec{x}, \vec{y})$ in the antecedent of a formula. 
If $\varphi$ is a sentence, then this procedure is no longer needed, and so the proof proceeds without using Lemma \ref{SelfProver1}. 
Then other parts of our proof of Theorem \ref{IPPL1} work within $\IS$. 
Thus we also obtain the following theorem. 

\begin{thm}\label{IPPL2}
If $\QPL_{\tau_0}(T_0) \subseteq \QPL_{\tau_1}(T_1)$, then for any $\mathcal{L}_A$-sentence $\varphi$, 
\[
	T_1 \vdash \PR_{\tau_0}(\gdl{\Con_{\tau_0} \to \varphi}) \leftrightarrow \PR_{\tau_1}(\gdl{\Con_{\tau_1} \to \varphi}). 
\]
\qed
\end{thm}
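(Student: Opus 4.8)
The plan is to run the proof of Theorem \ref{IPPL1} essentially unchanged, but specialized to the case where $\varphi$ is a sentence. In that case the tuples of free variables $\vec{x}, \vec{y}$ and the auxiliary formula $R_f(\vec{x}, \vec{y})$ simply disappear from the argument. Since the only places in the proof of Theorem \ref{IPPL1} that appealed to $\PA$ rather than $\IS$ were the invocations of Lemma \ref{SelfProver1} and Fact \ref{BooSurj} — both of which served only to manage the $R_f$-bookkeeping associated with free variables — removing that bookkeeping should let the entire derivation run inside $\IS$, and this is precisely why the hypothesis $\Th(\PA) \subseteq \Th(T_0)$ of Theorem \ref{IPPL1} can be dropped here.

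Concretely, I would first fix an arbitrary arithmetical interpretation $f$ and apply Artemov's Lemma (Fact \ref{AL}) to the sentence $\varphi$, obtaining $\IS \vdash \Con_{\tau_0} \land f_{\tau_0}(\D) \land f_{\tau_0}(\chi^\circ) \to (\varphi \leftrightarrow f_{\tau_0}(\varphi^\circ))$. A routine propositional rearrangement inside $\IS$ converts this into
\[
	\IS \vdash f_{\tau_0}(\D) \land f_{\tau_0}(\chi^\circ) \to \left((\Con_{\tau_0} \to \varphi) \leftrightarrow (\Con_{\tau_0} \to f_{\tau_0}(\varphi^\circ))\right),
\]
and an application of the derivability conditions (Fact \ref{DC}) then yields, using $f_{\tau_0}(\Diamond \top) = \Con_{\tau_0}$,
\[
	\IS \vdash f_{\tau_0}(\Box \D) \land f_{\tau_0}(\Box \chi^\circ) \to \left(\PR_{\tau_0}(\gdl{\Con_{\tau_0} \to \varphi}) \leftrightarrow f_{\tau_0}(\Box(\Diamond \top \to \varphi^\circ))\right),
\]
the sentence-case analogue of (\ref{eq1}). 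Applying Artemov's Lemma a second time, now to the sentence $\PR_{\tau_0}(\gdl{\Con_{\tau_0} \to \varphi})$, gives the analogue of (\ref{eq2}). Combining the two — and here there is no conjunct $\PR_{\tau_0}(\gdl{R_f(\cdots)})$ to rewrite, so Lemma \ref{SelfProver1} is never invoked and Fact \ref{BooSurj} is not needed to discharge any $\exists \vec{x}$ — produces
\[
	\IS \vdash \Con_{\tau_0} \land f_{\tau_0}(\boxdot \D) \land f_{\tau_0}(\boxdot \chi^\circ) \to \left(f_{\tau_0}(\PR_{\tau_0}(\gdl{\Con_{\tau_0} \to \varphi})^\circ) \leftrightarrow f_{\tau_0}(\Box(\Diamond \top \to \varphi^\circ))\right).
\]

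Since this holds in $\IS \subseteq T_0$ for every $f$, the modal sentence $\Diamond \top \land \boxdot \D \land \boxdot \chi^\circ \to (\PR_{\tau_0}(\gdl{\Con_{\tau_0} \to \varphi})^\circ \leftrightarrow \Box(\Diamond \top \to \varphi^\circ))$ lies in $\QPL_{\tau_0}(T_0)$, hence by hypothesis in $\QPL_{\tau_1}(T_1)$. Next I would evaluate it under a natural arithmetical interpretation and simplify via Proposition \ref{Natural}, which collapses $f_{\tau_1}(\boxdot \D)$, $f_{\tau_1}(\boxdot \chi^\circ)$ and the relational translations back to their intended $\mathcal{L}_A$-meanings; this yields $T_1 + \Con_{\tau_1} \vdash \PR_{\tau_0}(\gdl{\Con_{\tau_0} \to \varphi}) \leftrightarrow \PR_{\tau_1}(\gdl{\Con_{\tau_1} \to \varphi})$. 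Finally, Proposition \ref{Basic1} gives $T_1 \vdash \Con_{\tau_0} \leftrightarrow \Con_{\tau_1}$, so under $\neg \Con_{\tau_1}$ both provability predicates trivialize and the biconditional holds vacuously; splitting on $\Con_{\tau_1}$ then delivers the $T_1$-provable equivalence claimed in the theorem.

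The one point that genuinely needs verification, and which I regard as the main (if modest) obstacle, is the claim that deleting the $R_f$-machinery really leaves every inference inside $\IS$: one must confirm that the two instances of Artemov's Lemma and the derivability-condition manipulations combine exactly as in Theorem \ref{IPPL1} with no hidden dependence on the $\Sigma_1$-induction used in Lemma \ref{SelfProver1}. Once this is checked, the remainder is simply the sentence-level shadow of the Theorem \ref{IPPL1} argument.
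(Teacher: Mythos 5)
Your proposal is correct and is essentially the paper's own proof: the paper justifies Theorem \ref{IPPL2} by exactly this observation, namely that when $\varphi$ is a sentence the $R_f$-bookkeeping disappears from the proof of Theorem \ref{IPPL1}, so Lemma \ref{SelfProver1} (the sole reason for invoking $\PA$ and hence for the hypothesis $\Th(\PA) \subseteq \Th(T_0)$) is never needed and every remaining step goes through in $\IS$. One tiny correction: Fact \ref{BooSurj} is stated over $\IS$, so it was never a source of $\PA$-dependence — but since it too becomes vacuous in the sentence case, this does not affect your argument.
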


Using Fact \ref{BooSigma_1}, we prove a variation of Theorem \ref{IPPL1} with respect to $\Pi_1$ formulas. 

\begin{thm}\label{Pi1_1}
Suppose $\Th(\PA) \subseteq \Th(T_0)$. 
If $\QPL_{\tau_0}(T_0) \subseteq \QPL_{\tau_1}(T_1)$, then for any $\Pi_1$ formula $\varphi(\vec{y})$, 
\[
	T_1 \vdash \forall \vec{y} (\PR_{\tau_1}(\gdl{\varphi(\vec{\dot{y}})}) \to \PR_{\tau_0}(\gdl{\varphi(\vec{\dot{y}})})). 
\]
\end{thm}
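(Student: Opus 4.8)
The plan is to follow the template of the proof of Theorem \ref{IPPL1}, replacing the two-directional Artemov's Lemma by the one-directional Fact \ref{BooSigma_1}, which becomes available precisely because $\neg\varphi$ is $\Sigma_1$ whenever $\varphi$ is $\Pi_1$. Fix an arbitrary arithmetical interpretation $f$ and abbreviate $g = f_{\tau_0}$. First I would apply Fact \ref{BooSigma_1} to the $\Sigma_1$ formula $\neg\varphi(\vec{x})$ and contrapose, using $(\neg\varphi)^\circ = \neg\varphi^\circ$ and that $g$ commutes with $\neg$, to obtain $\IS \vdash g(\chi^\circ) \land R_f(\vec{x},\vec{y}) \to (g(\varphi^\circ(\vec{y})) \to \varphi(\vec{x}))$.

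Second, I would box this implication: by Fact \ref{DC}.1 and \ref{DC}.2 it yields $\IS \vdash g(\Box\chi^\circ) \land \PR_{\tau_0}(\gdl{R_f(\vec{\dot{x}},\vec{\dot{y}})}) \land g(\Box\varphi^\circ(\vec{y})) \to \PR_{\tau_0}(\gdl{\varphi(\vec{\dot{x}})})$, and then Lemma \ref{SelfProver1}.1 lets me replace $\PR_{\tau_0}(\gdl{R_f(\vec{\dot{x}},\vec{\dot{y}})})$ by $R_f(\vec{x},\vec{y})$ at the cost of the hypothesis $g(\D)$, giving $\PA \vdash g(\boxdot\D) \land g(\boxdot\chi^\circ) \land R_f(\vec{x},\vec{y}) \land g(\Box\varphi^\circ(\vec{y})) \to \PR_{\tau_0}(\gdl{\varphi(\vec{\dot{x}})})$. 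The consequent still carries the free variables $\vec{x}$, so third I would apply Fact \ref{BooSigma_1} once more, this time to the $\Sigma_1$ formula $\PR_{\tau_0}(\gdl{\varphi(\vec{\dot{x}})})$ itself, to get $\IS \vdash g(\chi^\circ) \land R_f(\vec{x},\vec{y}) \to (\PR_{\tau_0}(\gdl{\varphi(\vec{\dot{x}})}) \to g(\PR_{\tau_0}(\gdl{\varphi(\vec{\dot{y}})})^\circ))$. Combining the two displays, existentially quantifying away $\vec{x}$ (which now occurs only in $R_f$), and discharging $\exists\vec{x}\,R_f(\vec{x},\vec{y})$ via Fact \ref{BooSurj} (this is where $\Con_{\tau_0}$ must enter), I arrive at $\PA \vdash \Con_{\tau_0} \land g(\boxdot\D) \land g(\boxdot\chi^\circ) \land g(\Box\varphi^\circ(\vec{y})) \to g(\PR_{\tau_0}(\gdl{\varphi(\vec{\dot{y}})})^\circ)$.

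Since $\Th(\PA) \subseteq \Th(T_0)$, this is provable in $T_0$ for every $f$, so the modal sentence $\forall\vec{y}(\Diamond\top \land \boxdot\D \land \boxdot\chi^\circ \land \Box\varphi^\circ(\vec{y}) \to \PR_{\tau_0}(\gdl{\varphi(\vec{\dot{y}})})^\circ)$ lies in $\QPL_{\tau_0}(T_0)$, hence by hypothesis in $\QPL_{\tau_1}(T_1)$. Interpreting it under a natural interpretation and invoking Proposition \ref{Natural} together with Fact \ref{DC} to rewrite $f_{\tau_1}(\Box\varphi^\circ(\vec{y}))$ as $\PR_{\tau_1}(\gdl{\varphi(\vec{\dot{y}})})$ and $f_{\tau_1}(\PR_{\tau_0}(\gdl{\varphi(\vec{\dot{y}})})^\circ)$ as $\PR_{\tau_0}(\gdl{\varphi(\vec{\dot{y}})})$, I would obtain $T_1 + \Con_{\tau_1} \vdash \forall\vec{y}(\PR_{\tau_1}(\gdl{\varphi(\vec{\dot{y}})}) \to \PR_{\tau_0}(\gdl{\varphi(\vec{\dot{y}})}))$. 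Finally, exactly as at the end of the proof of Theorem \ref{IPPL1}, I would discharge $\Con_{\tau_1}$: by Proposition \ref{Basic1} we have $T_1 \vdash \neg\Con_{\tau_1} \to \neg\Con_{\tau_0}$, and under $\neg\Con_{\tau_0}$ the formula $\PR_{\tau_0}(\gdl{\varphi(\vec{\dot{y}})})$ holds for all $\vec{y}$, so the implication is trivial in $T_1 + \neg\Con_{\tau_1}$; combining the two cases gives the claim in $T_1$.

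I expect the main obstacle to be the bookkeeping in the boxing step and the consequent-conversion step: one must ensure that after boxing the provable $\Sigma_1$-completeness implication the residual self-reference $\PR_{\tau_0}(\gdl{R_f(\vec{\dot{x}},\vec{\dot{y}})})$ is eliminated cleanly by Lemma \ref{SelfProver1}.1, and that the variable-matching between the genuine predicate $\PR_{\tau_0}(\gdl{\varphi(\vec{\dot{x}})})$ and its relational $\circ$-image $\PR_{\tau_0}(\gdl{\varphi(\vec{\dot{y}})})^\circ$ is obtained by applying Fact \ref{BooSigma_1} to the $\Sigma_1$ formula $\PR_{\tau_0}(\gdl{\varphi(\vec{\dot{x}})})$ rather than to $\varphi$ itself. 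Everything else is a routine adaptation of the proof of Theorem \ref{IPPL1}, the only structural change being that the transferred modal sentence is an implication instead of a biconditional.
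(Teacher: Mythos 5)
Your proof is correct and, in structure, it is the paper's own proof of Theorem \ref{Pi1_1}: contrapose Fact \ref{BooSigma_1} for the $\Sigma_1$ formula $\neg\varphi$, box the result via Fact \ref{DC}, eliminate $\PR_{\tau_0}(\gdl{R_f(\vec{\dot{x}},\vec{\dot{y}})})$ by Lemma \ref{SelfProver1}.1, remove $R_f(\vec{x},\vec{y})$ by Fact \ref{BooSurj}, transfer the resulting modal sentence through the inclusion, read it under a natural interpretation via Proposition \ref{Natural}, and discharge $\Con_{\tau_1}$ using Proposition \ref{Basic1}. The one genuine deviation is the consequent-conversion step: the paper obtains its formula (\ref{eq6}) from the full Artemov's Lemma (Fact \ref{AL}), so the hypotheses $\Con_{\tau_0} \land f_{\tau_0}(\D)$ enter at that point, whereas you apply Fact \ref{BooSigma_1} to the $\Sigma_1$ formula $\PR_{\tau_0}(\gdl{\varphi(\vec{\dot{x}})})$ itself, needing only $f_{\tau_0}(\chi^\circ) \land R_f(\vec{x},\vec{y})$. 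This is sound --- only the forward implication is ever used, and that direction is exactly what the $\Sigma_1$ fact provides --- and it is marginally more economical, isolating the need for $\Con_{\tau_0}$ and $f_{\tau_0}(\D)$ to Lemma \ref{SelfProver1}.1 and Fact \ref{BooSurj}; since those hypotheses are forced back in there anyway, both routes end with essentially the same modal sentence (yours has $\boxdot\D$ where the paper has $\D$, which is harmless because Proposition \ref{Natural}.2 supplies $f_{\tau_1}(\boxdot\D)$ under the natural interpretation), and the endgame is identical.
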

\begin{proof}
Suppose $\Th(\PA) \subseteq \Th(T_0)$ and $\QPL_{\tau_0}(T_0) \subseteq \QPL_{\tau_1}(T_1)$. 
Let $f$ be any arithmetical interpretation and let $\varphi(\vec{y})$ be any $\Pi_1$ formula. 
Since $\neg \varphi(\vec{y})$ is $\Sigma_1$, by Fact \ref{BooSigma_1}, $\IS \vdash f_{\tau_0}(\chi^\circ) \land R_f(\vec{x}, \vec{y}) \land \neg \varphi(\vec{x}) \to f_{\tau_0}(\neg \varphi^\circ(\vec{y}))$. 
Then, $T_0 \vdash f_{\tau_0}(\chi^\circ) \land R_f(\vec{x}, \vec{y}) \land f_{\tau_0}(\varphi^\circ(\vec{y})) \to \varphi(\vec{x})$. 
By Fact \ref{DC}, 
\begin{equation}\label{eq5}
	\IS \vdash f_{\tau_0}(\Box \chi^\circ) \land \PR_{\tau_0}(\gdl{R_f(\vec{\dot{x}}, \vec{\dot{y}})}) \land f_{\tau_0}(\Box \varphi^\circ(\vec{y})) \to \PR_{\tau_0}(\gdl{\varphi(\vec{\dot{x}})}).
\end{equation}
By Artemov's Lemma, $\IS$ proves 
\begin{equation}\label{eq6}
	\Con_{\tau_0} \land f_{\tau_0}(\D) \land f_{\tau_0}(\chi^\circ) \land R_f(\vec{x}, \vec{y}) \land \PR_{\tau_0}(\gdl{\varphi(\vec{\dot{x}})}) \to f_{\tau_0}(\PR_{\tau_0}(\gdl{\varphi(\vec{\dot{y}})})^\circ). 
\end{equation}
By combining Lemma \ref{SelfProver1} with (\ref{eq5}) and (\ref{eq6}), $\PA$ proves 
\[
	\Con_{\tau_0} \land f_{\tau_0}(\D) \land f_{\tau_0}(\boxdot \chi^\circ) \land R_f(\vec{x}, \vec{y}) \land f_{\tau_0}(\Box \varphi^\circ(\vec{y})) \to f_{\tau_0}(\PR_{\tau_0}(\gdl{\varphi(\vec{\dot{y}})})^\circ). 
\]
As in the proof of Theorem \ref{IPPL1}, $R_f(\vec{x}, \vec{y})$ is removed from the antecedent of the formula, that is, 
\[
	\PA \vdash \Con_{\tau_0} \land f_{\tau_0}(\D) \land f_{\tau_0}(\boxdot \chi^\circ) \land f_{\tau_0}(\Box \varphi^\circ(\vec{y})) \to f_{\tau_0}(\PR_{\tau_0}(\gdl{\varphi(\vec{\dot{y}})})^\circ). 
\]
Since $\Th(\PA) \subseteq \Th(T_0)$, 
\[
	\forall \vec{y} \left(\Diamond \top \land \D \land \boxdot \chi^\circ \land \Box \varphi^\circ(\vec{y}) \to \PR_{\tau_0}(\gdl{\varphi(\vec{\dot{y}})})^\circ \right) \in \QPL_{\tau_0}(T_0) \subseteq \QPL_{\tau_1}(T_1). 
\]
By considering a natural arithmetical interpretation, we obtain
\[
	T_1 + \Con_{\tau_1} \vdash \forall \vec{y} ( \PR_{\tau_1}(\gdl{\varphi(\vec{\dot{y}})}) \to \PR_{\tau_0}(\gdl{\varphi(\vec{\dot{y}})})). 
\]
By Proposition \ref{Basic1}, $T_1 + \neg \Con_{\tau_1} \vdash \neg \Con_{\tau_0}$, and in particular, $T_1 + \neg \Con_{\tau_1}$ proves $\forall \vec{y} \PR_{\tau_0}(\gdl{\varphi(\vec{\dot{y}})})$. 
Therefore we conclude
\[
	T_1 \vdash \forall \vec{y} (\PR_{\tau_1}(\gdl{\varphi(\vec{\dot{y}})}) \to \PR_{\tau_0}(\gdl{\varphi(\vec{\dot{y}})})). 
\]
\end{proof}

As above, we also obtain the following theorem. 

\begin{thm}\label{Pi1_2}
If $\QPL_{\tau_0}(T_0) \subseteq \QPL_{\tau_1}(T_1)$, then for any $\Pi_1$ sentence $\varphi$, 
\[
	T_1 \vdash \PR_{\tau_1}(\gdl{\varphi}) \to \PR_{\tau_0}(\gdl{\varphi}). 
\]
\qed
\end{thm}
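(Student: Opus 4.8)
The plan is to run the proof of Theorem \ref{IPPL1}/Theorem \ref{Pi1_1} in the special case where $\varphi$ is a \emph{sentence}, exactly as the remark preceding Theorem \ref{IPPL2} suggests. Since $\varphi$ has no free variables, the auxiliary predicate $R_f(\vec{x}, \vec{y})$ becomes vacuous, so Lemma \ref{SelfProver1} and the surjectivity statement Fact \ref{BooSurj} are never invoked; consequently the whole argument runs inside $\IS$ and the hypothesis $\Th(\PA) \subseteq \Th(T_0)$ of Theorem \ref{Pi1_1} can be dropped, just as it was for Theorem \ref{IPPL2}.

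Concretely, fix an arithmetical interpretation $f$. Since $\varphi$ is $\Pi_1$, the sentence $\neg \varphi$ is $\Sigma_1$, so Fact \ref{BooSigma_1} (with the now-trivial $R_f$) gives $\IS \vdash f_{\tau_0}(\chi^\circ) \to (\neg \varphi \to f_{\tau_0}(\neg \varphi^\circ))$, that is, $\IS \vdash f_{\tau_0}(\chi^\circ) \land f_{\tau_0}(\varphi^\circ) \to \varphi$. Provabilizing this via Fact \ref{DC}.1 and \ref{DC}.2 yields
\[
	\IS \vdash f_{\tau_0}(\Box \chi^\circ) \land f_{\tau_0}(\Box \varphi^\circ) \to \PR_{\tau_0}(\gdl{\varphi}).
\]
Now $\PR_{\tau_0}(\gdl{\varphi})$ is itself a $\Sigma_1$ sentence, so a second application of Fact \ref{BooSigma_1} gives $\IS \vdash f_{\tau_0}(\chi^\circ) \to (\PR_{\tau_0}(\gdl{\varphi}) \to f_{\tau_0}(\PR_{\tau_0}(\gdl{\varphi})^\circ))$. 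Chaining the two implications and folding $f_{\tau_0}(\chi^\circ) \land f_{\tau_0}(\Box \chi^\circ)$ into $f_{\tau_0}(\boxdot \chi^\circ)$, I obtain
\[
	\IS \vdash f_{\tau_0}\!\left(\boxdot \chi^\circ \land \Box \varphi^\circ \to \PR_{\tau_0}(\gdl{\varphi})^\circ\right).
\]
As $T_0$ extends $\IS$ and $f$ was arbitrary, the modal sentence $\boxdot \chi^\circ \land \Box \varphi^\circ \to \PR_{\tau_0}(\gdl{\varphi})^\circ$ belongs to $\QPL_{\tau_0}(T_0)$. Note that, unlike in Theorem \ref{Pi1_1}, neither $\Diamond \top$ nor $\D$ appears: the antecedent $\Con_{\tau_0} \land f_{\tau_0}(\D)$ was needed only for Fact \ref{BooSurj}, which is not used here.

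By the inclusion hypothesis this sentence also lies in $\QPL_{\tau_1}(T_1)$. Evaluating at a natural arithmetical interpretation and applying Proposition \ref{Natural}, clause~2 discharges $f_{\tau_1}(\boxdot \chi^\circ)$ while clause~1 rewrites $f_{\tau_1}(\Box \varphi^\circ)$ as $\PR_{\tau_1}(\gdl{\varphi})$ and $f_{\tau_1}(\PR_{\tau_0}(\gdl{\varphi})^\circ)$ as $\PR_{\tau_0}(\gdl{\varphi})$, up to $\IS$-provable equivalence. This gives $T_1 \vdash \PR_{\tau_1}(\gdl{\varphi}) \to \PR_{\tau_0}(\gdl{\varphi})$, as required. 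The only point demanding care — and the content hidden behind the phrase ``as above'' — is the verification that every step genuinely closes within $\IS$ without appeal to $\Con_{\tau_0}$, $f_{\tau_0}(\D)$, or $\Th(\PA) \subseteq \Th(T_0)$; this rests on the observation that the sole ingredient in Theorem \ref{Pi1_1} requiring those assumptions, namely the surjectivity of $R_f$, is eliminated once $\varphi$ carries no free variables. The remaining natural-interpretation bookkeeping is routine.
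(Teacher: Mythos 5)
Your proof is correct, and it deviates from the paper's own argument at the key internalization step --- in a way that genuinely simplifies it. The paper proves Theorem \ref{Pi1_2} as the sentence-instance of the proof of Theorem \ref{Pi1_1}: there the implication $\PR_{\tau_0}(\gdl{\varphi}) \to f_{\tau_0}(\PR_{\tau_0}(\gdl{\varphi})^\circ)$ comes from Artemov's Lemma (as in (\ref{eq6})), whose hypothesis keeps $\Con_{\tau_0} \land f_{\tau_0}(\D)$ in the antecedent even after $R_f$, Lemma \ref{SelfProver1} and Fact \ref{BooSurj} drop out in the sentence case; consequently the modal sentence placed in $\QPL_{\tau_0}(T_0)$ still begins with $\Diamond \top \land \D$, the natural interpretation yields only $T_1 + \Con_{\tau_1} \vdash \PR_{\tau_1}(\gdl{\varphi}) \to \PR_{\tau_0}(\gdl{\varphi})$, and Proposition \ref{Basic1} is needed to finish, since $T_1 + \neg \Con_{\tau_1}$ proves $\neg \Con_{\tau_0}$ and hence $\PR_{\tau_0}(\gdl{\varphi})$. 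You avoid all of this by noting that $\PR_{\tau_0}(\gdl{\varphi})$ is a $\Sigma_1$ \emph{sentence} and that only the forward implication is required, so a second application of Fact \ref{BooSigma_1} does the job without $\Con_{\tau_0} \land f_{\tau_0}(\D)$. What this buys: your modal sentence $\boxdot \chi^\circ \land \Box \varphi^\circ \to \PR_{\tau_0}(\gdl{\varphi})^\circ$ contains neither $\Diamond \top$ nor $\D$, so the natural interpretation gives $T_1 \vdash \PR_{\tau_1}(\gdl{\varphi}) \to \PR_{\tau_0}(\gdl{\varphi})$ outright, with no case split on $\Con_{\tau_1}$ and no appeal to Artemov's Lemma or Proposition \ref{Basic1}; structurally this anticipates the paper's own treatment of Theorem \ref{MT} via $\Sigma_1$-Artemov's Lemma. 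One side remark of yours is inaccurate: in the proof of Theorem \ref{Pi1_1} the assumption $\Con_{\tau_0} \land f_{\tau_0}(\D)$ is not needed ``only for Fact \ref{BooSurj}'' --- it also enters through Artemov's Lemma in (\ref{eq6}) and, as to $\D$, through Lemma \ref{SelfProver1}.1; your argument is untouched by this, precisely because you replaced exactly that use of Artemov's Lemma with Fact \ref{BooSigma_1}.
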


As consequences of theorems proved in this subsection, we obtain several corollaries. 

\begin{cor}\label{DeducEquiv}
If $\QPL_{\tau_0}(T_0) \subseteq \QPL_{\tau_1}(T_1)$ and $T_1$ is $\Sigma_1$-sound, then 
\begin{enumerate}
	\item $\Th(T_0 + \Con_{\tau_0}) = \Th(T_1 + \Con_{\tau_1})$; and
	\item $\Th_{\Pi_1}(T_1) \subseteq \Th_{\Pi_1}(T_0)$. 
\end{enumerate}
\end{cor}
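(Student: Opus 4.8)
The plan is to establish the two clauses by a single common mechanism: both reduce, via the transfer theorems already proved, to a $\Sigma_1$ sentence that $T_1$ proves, and the $\Sigma_1$-soundness of $T_1$ then upgrades this internal provability statement into genuine provability in $T_0$. Throughout I rely on the fact that $\tau_0(v)$ is a \emph{definition} of $T_0$, so that $\N \models \PR_{\tau_0}(\gdl{\psi})$ holds precisely when $T_0 \vdash \psi$. Note that, unlike Theorems \ref{IPPL1} and \ref{Pi1_1}, neither half will need the auxiliary hypothesis $\Th(\PA) \subseteq \Th(T_0)$, since I invoke only the sentence-versions \ref{IPPL2} and \ref{Pi1_2}.

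For Clause 2, I would start from an arbitrary $\Pi_1$ sentence $\varphi$ with $T_1 \vdash \varphi$. By Fact \ref{DC}.1 this gives $\IS \vdash \PR_{\tau_1}(\gdl{\varphi})$, hence $T_1 \vdash \PR_{\tau_1}(\gdl{\varphi})$. Theorem \ref{Pi1_2} then yields $T_1 \vdash \PR_{\tau_0}(\gdl{\varphi})$. Since $\PR_{\tau_0}(\gdl{\varphi})$ is a $\Sigma_1$ sentence provable in $T_1$, the $\Sigma_1$-soundness of $T_1$ forces $\N \models \PR_{\tau_0}(\gdl{\varphi})$, that is, $T_0 \vdash \varphi$. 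As $\varphi$ was an arbitrary $\Pi_1$ theorem of $T_1$, this establishes $\Th_{\Pi_1}(T_1) \subseteq \Th_{\Pi_1}(T_0)$.

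For Clause 1, one inclusion $\Th(T_0 + \Con_{\tau_0}) \subseteq \Th(T_1 + \Con_{\tau_1})$ is exactly Corollary \ref{VDCor1}.1 and needs no soundness assumption. For the converse I would take any sentence $\varphi$ with $T_1 + \Con_{\tau_1} \vdash \varphi$, equivalently $T_1 \vdash \Con_{\tau_1} \to \varphi$. By Fact \ref{DC}.1, $\IS \vdash \PR_{\tau_1}(\gdl{\Con_{\tau_1} \to \varphi})$, so $T_1 \vdash \PR_{\tau_1}(\gdl{\Con_{\tau_1} \to \varphi})$. Applying Theorem \ref{IPPL2} to the sentence $\varphi$ transfers this to $T_1 \vdash \PR_{\tau_0}(\gdl{\Con_{\tau_0} \to \varphi})$. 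This is again a $\Sigma_1$ theorem of $T_1$, so $\Sigma_1$-soundness gives $\N \models \PR_{\tau_0}(\gdl{\Con_{\tau_0} \to \varphi})$, i.e.\ $T_0 \vdash \Con_{\tau_0} \to \varphi$, and therefore $T_0 + \Con_{\tau_0} \vdash \varphi$. Combining the two inclusions yields the desired equality.

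The argument is short because the heavy lifting has already been done in Theorems \ref{IPPL2} and \ref{Pi1_2}; the only genuinely load-bearing step is the passage from ``$T_1$ proves the $\Sigma_1$ sentence $\PR_{\tau_0}(\gdl{\cdots})$'' to ``this sentence is true in $\N$'', which is where the $\Sigma_1$-soundness of $T_1$ is indispensable and where the implication would fail without it. I expect no real obstacle beyond keeping straight that it is truth in $\N$, rather than provability in $T_1$, that corresponds to actual derivability in $T_0$ via the assumption that $\tau_0(v)$ defines $T_0$.
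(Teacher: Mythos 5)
Your proposal is correct and follows essentially the same route as the paper's own proof: Corollary \ref{VDCor1}.1 for the inclusion $\Th(T_0 + \Con_{\tau_0}) \subseteq \Th(T_1 + \Con_{\tau_1})$, Theorem \ref{IPPL2} plus $\Sigma_1$-soundness for the converse, and Theorem \ref{Pi1_2} plus $\Sigma_1$-soundness for Clause 2. Your observation that only the sentence-versions are needed (so no hypothesis $\Th(\PA) \subseteq \Th(T_0)$ is required) matches the paper exactly.
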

\begin{proof}
Suppose $\QPL_{\tau_0}(T_0) \subseteq \QPL_{\tau_1}(T_1)$ and $T_1$ is $\Sigma_1$-sound. 

1. By Corollary \ref{VDCor1}.1, $\Th(T_0 + \Con_{\tau_0}) \subseteq \Th(T_1 + \Con_{\tau_1})$. 
On the other hand, let $\varphi$ be any $\mathcal{L}_A$-sentence $\varphi$ with $T_1 + \Con_{\tau_1} \vdash \varphi$. 
Then, $T_1 \vdash \PR_{\tau_1}(\gdl{\Con_{\tau_1} \to \varphi})$ by Fact \ref{DC}.1. 
By Theorem \ref{IPPL2}, 
\[
	T_1 \vdash \PR_{\tau_0}(\gdl{\Con_{\tau_0} \to \varphi}) \leftrightarrow \PR_{\tau_1}(\gdl{\Con_{\tau_1} \to \varphi}), 
\]
and hence $T_1 \vdash \PR_{\tau_0}(\gdl{\Con_{\tau_0} \to \varphi})$. 
Then, $\PR_{\tau_0}(\gdl{\Con_{\tau_0} \to \varphi})$ is true in $\N$ because $T_1$ is $\Sigma_1$-sound. 
This means $T_0 \vdash \Con_{\tau_0} \to \varphi$. 
Therefore we conclude $\Th(T_1 + \Con_{\tau_1}) \subseteq \Th(T_0 + \Con_{\tau_0})$. 

2. Let $\varphi$ be any $\Pi_1$ sentence such that $T_1 \vdash \varphi$. 
Then $T_1 \vdash \PR_{\tau_1}(\gdl{\varphi})$ by Fact \ref{DC}.1. 
By Theorem \ref{Pi1_2}, $T_1 \vdash \PR_{\tau_1}(\gdl{\varphi}) \to \PR_{\tau_0}(\gdl{\varphi})$, and hence $T_1 \vdash \PR_{\tau_0}(\gdl{\varphi})$. 
Since $T_1$ is $\Sigma_1$-sound, $T_0 \vdash \varphi$. 
Thus $\Th_{\Pi_1}(T_1) \subseteq \Th_{\Pi_1}(T_0)$. 

\end{proof}

In the next subsection, we will prove that the assumption of the $\Sigma_1$-soundness of $T_1$ in the statement of Corollary \ref{DeducEquiv} cannot be removed (see Propositions \ref{Sigma1_3} and \ref{notPicons}). 

\begin{rem}
We say that a theory $T_1$ is \textit{faithfully interpretable} in a theory $T_0$ if there exists an interpretation $I$ of $T_1$ in $T_0$ such that for any $\mathcal{L}_A$-sentence $\varphi$, $T_1 \vdash \varphi$ if and only if $T_0 \vdash I(\varphi)$. 
Lindstr\"om \cite{Lin84} proved that if $T_0$ and $T_1$ are consistent recursively enumerable extensions of $\PA$, then $T_1$ is faithfully interpretable in $T_0$ if and only if $\Th_{\Pi_1}(T_1) \subseteq \Th_{\Pi_1}(T_0)$ and $\Th_{\Sigma_1}(T_0) \subseteq \Th_{\Sigma_1}(T_1)$. 
Therefore from Corollaries \ref{DeducEquiv} and \ref{Sigma1_5}.1, we obtain that if $\Th(\PA) \subseteq \Th(T_0) \cap \Th(T_1)$, $\QPL_{\tau_0}(T_0) \subseteq \QPL_{\tau_1}(T_1)$ and $T_1$ is $\Sigma_1$-sound, then $T_1$ is faithfully interpretable in $T_0$.
\end{rem}

We show that if $T_1$ is $\Sigma_1$-sound and proves the $\Sigma_1$-soundness of $T_0$, then $\QPL_{\tau_0}(T_0)$ and $\QPL_{\tau_1}(T_1)$ are incomparable in the following strong sense. 

\begin{cor}\label{Sigma_1Rfn}
Suppose that $T_0$ is consistent, $T_1$ is $\Sigma_1$-sound and for some $\Sigma_1$ definition $\sigma_0(v)$ of $T_0$, for all $\Sigma_1$ sentences $\varphi$, $T_1 \vdash \PR_{\sigma_0}(\gdl{\varphi}) \to \varphi$. 
Then, for any respective $\Sigma_1$ definitions $\tau_0(v)$ and $\tau_1(v)$ of $T_0$ and $T_1$, $\QPL_{\tau_0}(T_0) \nsubseteq \QPL_{\tau_1}(T_1)$ and $\QPL_{\tau_1}(T_1) \nsubseteq \QPL_{\tau_0}(T_0)$. 
\end{cor}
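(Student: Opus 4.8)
The plan is to first extract from the hypotheses the stronger fact that $T_0$ itself is $\Sigma_1$-sound, and then treat the two non-inclusions separately: the first by a $\Pi_1$-theory argument, and the second by a G\"odelian argument carried out inside the theory $T_0 + \Con_{\tau_0}$.

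First I would record two consequences of the reflection hypothesis. Taking $\varphi$ to be $0 = \overline{1}$ gives $T_1 \vdash \Con_{\sigma_0}$. Moreover, $T_0$ must be $\Sigma_1$-sound: if $T_0 \vdash \psi$ for some false $\Sigma_1$ sentence $\psi$, then $\PR_{\sigma_0}(\gdl{\psi})$ is a true $\Sigma_1$ sentence, hence provable already in $\IS$ and so in $T_1$; the reflection hypothesis then yields $T_1 \vdash \psi$, contradicting the $\Sigma_1$-soundness of $T_1$. In particular, for any $\Sigma_1$ definition $\tau_0(v)$ of $T_0$, the sentence $\neg \Con_{\tau_0}$ is a false $\Sigma_1$ sentence, so $T_0 \nvdash \neg \Con_{\tau_0}$ and the theory $T_0 + \Con_{\tau_0}$ is consistent.

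For $\QPL_{\tau_0}(T_0) \nsubseteq \QPL_{\tau_1}(T_1)$ I would argue by contradiction. Assuming the inclusion and using that $T_1$ is $\Sigma_1$-sound, Corollary \ref{DeducEquiv}.2 gives $\Th_{\Pi_1}(T_1) \subseteq \Th_{\Pi_1}(T_0)$. Since $\Con_{\sigma_0}$ is a $\Pi_1$ sentence provable in $T_1$, this forces $T_0 \vdash \Con_{\sigma_0}$, contradicting G\"odel's second incompleteness theorem for the consistent theory $T_0$. This works uniformly in $\tau_0$ and $\tau_1$, since the conclusion of Corollary \ref{DeducEquiv}.2 does not mention the definitions. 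For $\QPL_{\tau_1}(T_1) \nsubseteq \QPL_{\tau_0}(T_0)$, assume the inclusion and set $U := T_0 + \Con_{\tau_0}$, which is consistent by the preliminary step. Applying Corollary \ref{VDCor1}.1 with the two provability logics interchanged yields $\Th(T_1 + \Con_{\tau_1}) \subseteq \Th(U)$; in particular every theorem of $T_1$ is a theorem of $U$, so the reflection hypothesis transfers: $U \vdash \PR_{\sigma_0}(\gdl{\psi}) \to \psi$ for every $\Sigma_1$ sentence $\psi$. Taking $\psi$ to be $\neg \Con_{\tau_0}$ and using $U \vdash \Con_{\tau_0}$, I obtain $U \vdash \neg \PR_{\sigma_0}(\gdl{\neg \Con_{\tau_0}})$. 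By the formalized deduction theorem (and the fact that $\PR_{\sigma_0}$ respects provable equivalence), $\IS \vdash \Con_{\sigma_0 + \Con_{\tau_0}} \leftrightarrow \neg \PR_{\sigma_0}(\gdl{\neg \Con_{\tau_0}})$, whence $U \vdash \Con_{\sigma_0 + \Con_{\tau_0}}$. But $\sigma_0 + \Con_{\tau_0}$ is a $\Sigma_1$ definition of the consistent theory $U$ itself, so $U \nvdash \Con_{\sigma_0 + \Con_{\tau_0}}$ by G\"odel's second incompleteness theorem, a contradiction.

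The main obstacle, and the point I would be most careful about, is the choice of definition of $U$ in the second part. The reflection hypothesis controls only the predicate $\PR_{\sigma_0}$, and for an adversarial $\tau_0$ the predicates $\PR_{\tau_0}$ and $\PR_{\sigma_0}$ need not be provably related, so a consistency statement phrased with $\PR_{\tau_0}$ would be useless here. The trick is to present $U$ through the definition $\sigma_0 + \Con_{\tau_0}$ built from the same $\sigma_0$, so that the consistency statement it yields is exactly the sentence forced by reflection; this is what lets G\"odel's theorem bite uniformly in $\tau_0$. I would also double-check that $\Sigma_1$-soundness of $T_0$ is genuinely needed only to guarantee the consistency of $U$, and that the transfer of reflection uses nothing beyond $\Th(T_1) \subseteq \Th(U)$.
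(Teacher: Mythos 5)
Your proposal is correct and follows essentially the same route as the paper's own proof: the first non-inclusion via the $\Pi_1$ sentence $\Con_{\sigma_0}$ combined with Corollary \ref{DeducEquiv}.2 and the second incompleteness theorem, and the second non-inclusion via the consistency statement $\Con_{\sigma_0 + \Con_{\tau_0}}$ combined with Corollary \ref{VDCor1}.1 and the second incompleteness theorem applied to $T_0 + \Con_{\tau_0}$ presented by the definition $\sigma_0 + \Con_{\tau_0}$. The only difference is organizational: you establish the $\Sigma_1$-soundness of $T_0$ (hence the consistency of $T_0 + \Con_{\tau_0}$) up front and derive a direct contradiction with G\"odel's theorem, whereas the paper exhibits the sentence $\Con_{\tau_0} \to \Con_{\sigma_0 + \Con_{\tau_0}}$ as provable in $T_1$ but not in $T_0 + \Con_{\tau_0}$, obtaining the needed consistency facts inside a sub-contradiction using the same ingredients (Fact \ref{DC}.1, the reflection hypothesis, and the $\Sigma_1$-soundness of $T_1$).
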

\begin{proof}
First, we show $\QPL_{\tau_0}(T_0) \nsubseteq \QPL_{\tau_1}(T_1)$. 
By the supposition, $T_1$ proves $\PR_{\sigma_0}(\gdl{0=\overline{1}}) \to 0 = \overline{1}$ which is equivalent to $\Con_{\sigma_0}$. 
On the other hand, $T_0 \nvdash \Con_{\sigma_0}$ by the second incompleteness theorem. 
Since $\Con_{\sigma_0}$ is a $\Pi_1$ sentence, $\Th_{\Pi_1}(T_1) \nsubseteq \Th_{\Pi_1}(T_0)$. 
Therefore $\QPL_{\tau_0}(T_0) \nsubseteq \QPL_{\tau_1}(T_1)$ by Corollary \ref{DeducEquiv} because $T_1$ is $\Sigma_1$-sound. 

Secondly, we show $\QPL_{\tau_1}(T_1) \nsubseteq \QPL_{\tau_0}(T_0)$. 
Since $\neg \Con_{\tau_0}$ is $\Sigma_1$, $T_1$ proves $\PR_{\sigma_0}(\gdl{\neg \Con_{\tau_0}}) \to \neg \Con_{\tau_0}$, and also proves $\Con_{\tau_0} \to \Con_{\sigma_0 + \Con_{\tau_0}}$. 
On the other hand, assume, towards a contradiction, that $T_0 + \Con_{\tau_0}$ proves the sentence $\Con_{\tau_0} \to \Con_{\sigma_0 + \Con_{\tau_0}}$. 
Then, $T_0 + \Con_{\tau_0}$ proves its own consistency, and hence it is inconsistent by the second incompleteness theorem. 
We have $T_0 \vdash \neg \Con_{\tau_0}$. 
By Fact \ref{DC}.1, $T_1 \vdash \PR_{\sigma_0}(\gdl{\neg \Con_{\tau_0}})$. 
Hence $T_1 \vdash \neg \Con_{\tau_0}$, and this contradicts the $\Sigma_1$-soundness of $T_1$. 
We obtain $T_0 + \Con_{\tau_0} \nvdash \Con_{\tau_0} \to \Con_{\sigma_0 + \Con_{\tau_0}}$. 
Therefore $\Th(T_1) \nsubseteq \Th(T_0 + \Con_{\tau_0})$. 
By Corollary \ref{VDCor1}.1, we conclude $\QPL_{\tau_1}(T_1) \nsubseteq \QPL_{\tau_0}(T_0)$.
\end{proof}

\begin{rem}
Let $i$ and $j$ be any natural numbers with $0 < i < j$. 
Then, the theory $\mathbf{I\Sigma_j}$ is $\Sigma_1$-sound and proves $\PR_{\mathbf{I\Sigma_i}}(\gdl{\varphi}) \to \varphi$ for all $\Sigma_1$ sentences $\varphi$ (cf.~H\'ajek and Pudl\'ak \cite[Corollary I.4.34]{HP}). 
From Corollary \ref{Sigma_1Rfn}, for any respective $\Sigma_1$ definitions $\sigma_i(v)$ and $\sigma_j(v)$ of $\mathbf{I\Sigma_i}$ and $\mathbf{I\Sigma_j}$, $\QPL_{\sigma_i}(\mathbf{I\Sigma_i}) \nsubseteq \QPL_{\sigma_j}(\mathbf{I\Sigma_j})$ and $\QPL_{\sigma_j}(\mathbf{I\Sigma_j}) \nsubseteq \QPL_{\sigma_i}(\mathbf{I\Sigma_i})$. 
This is a refinement of a result of Kurahashi \cite{Kur13B}. 
\end{rem}

\begin{lem}\label{eqQPL}
Let $\sigma(v)$ be any $\Sigma_1$ definition of some theory. 
Suppose that for all $\mathcal{L}_A$-formulas $\varphi(\vec{x})$, $T \vdash \forall \vec{x} (\PR_{\sigma}(\ulcorner \varphi(\vec{\dot{x}}) \urcorner) \leftrightarrow \PR_{\tau}(\ulcorner \varphi(\vec{\dot{x}}) \urcorner))$. 
Then, for any quantified modal formula $A$ and any arithmetical interpretation $f$, $T \vdash f_{\sigma}(A) \leftrightarrow f_{\tau}(A)$.
\end{lem}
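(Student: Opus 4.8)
The plan is to fix an arbitrary arithmetical interpretation $f$ and argue by induction on the construction of the quantified modal formula $A$, establishing $T \vdash f_\sigma(A) \leftrightarrow f_\tau(A)$. The base cases are immediate: for an atomic formula $A$ the value $f_\sigma(A)$ equals $f(A)$ and involves no provability predicate at all, so $f_\sigma(A)$ and $f_\tau(A)$ are literally the same $\mathcal{L}_A$-formula, and $f_\sigma(\bot)$, $f_\tau(\bot)$ are both $0 = \overline{1}$. For the propositional connectives and the quantifiers, $f_\sigma$ and $f_\tau$ both commute with the symbol in question, so the equivalence for $A$ follows from the induction hypotheses applied to the immediate subformulas; for a quantifier step $A = \forall x\, B$ one reads the induction hypothesis for $B$ as the $T$-provable universal closure of $f_\sigma(B) \leftrightarrow f_\tau(B)$ over all free variables of $B$ and then reinstates $\forall x$.

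The only substantive case is the modal one, $A = \Box B(\vec{x})$, where by definition $f_\sigma(A)$ is $\PR_\sigma(\gdl{f_\sigma(B(\vec{\dot{x}}))})$ and $f_\tau(A)$ is $\PR_\tau(\gdl{f_\tau(B(\vec{\dot{x}}))})$. I would obtain the equivalence of these two formulas by passing through the intermediate formula $\PR_\tau(\gdl{f_\sigma(B(\vec{\dot{x}}))})$ and combining two ingredients. First, the induction hypothesis for $B$ gives $T \vdash f_\sigma(B(\vec{x})) \leftrightarrow f_\tau(B(\vec{x}))$; since $\tau$ is a definition of $T$, Fact \ref{DC}.1 applied to $\tau$ yields $\IS \vdash \PR_\tau(\gdl{f_\sigma(B(\vec{\dot{x}})) \leftrightarrow f_\tau(B(\vec{\dot{x}}))})$, and Fact \ref{DC}.2 then turns this into $\IS \vdash \PR_\tau(\gdl{f_\sigma(B(\vec{\dot{x}}))}) \leftrightarrow \PR_\tau(\gdl{f_\tau(B(\vec{\dot{x}}))})$. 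Second, the hypothesis of the lemma, applied to the $\mathcal{L}_A$-formula $\varphi(\vec{x}) := f_\sigma(B(\vec{x}))$, gives $T \vdash \forall \vec{x}\,(\PR_\sigma(\gdl{f_\sigma(B(\vec{\dot{x}}))}) \leftrightarrow \PR_\tau(\gdl{f_\sigma(B(\vec{\dot{x}}))}))$. Chaining these two equivalences yields $T \vdash \PR_\sigma(\gdl{f_\sigma(B(\vec{\dot{x}}))}) \leftrightarrow \PR_\tau(\gdl{f_\tau(B(\vec{\dot{x}}))})$, which is exactly $T \vdash f_\sigma(A) \leftrightarrow f_\tau(A)$.

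The point that needs the most care, and that dictates the shape of the argument, is that the formalized necessitation rule (Fact \ref{DC}.1) is only available for the provability predicate of the very theory that proves the premise. The induction hypothesis supplies only $T$-provability of $f_\sigma(B) \leftrightarrow f_\tau(B)$, so I can necessitate it under $\PR_\tau$ (because $\tau$ defines $T$) but not under $\PR_\sigma$, whose theory need not prove this equivalence. This is precisely why the proof routes everything through $\PR_\tau$ and uses the lemma's hypothesis, rather than the induction hypothesis, to switch between $\PR_\sigma$ and $\PR_\tau$ on the common formula $f_\sigma(B)$. A secondary bookkeeping issue is keeping track of free variables: at each inductive step the $T$-derivable equivalences should be read as universal closures over the free variables of the subformulas, so that the dotted substitution notation in the modal case lines up correctly.
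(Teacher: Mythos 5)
Your proof is correct and follows essentially the same route as the paper's: induction on $A$, with the modal case handled by chaining the lemma's hypothesis (applied to the $\mathcal{L}_A$-formula $f_\sigma(B)$) with formalized necessitation and distribution under $\PR_\tau$, the predicate of the theory $T$ that proves the induction hypothesis. Your observation that necessitation is only available for $\PR_\tau$ and not $\PR_\sigma$ is exactly the point that makes the paper's argument work, though the paper leaves it implicit.
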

\begin{proof}
We prove the lemma by induction on the construction of $A$. 
We only give a proof of the case that $A$ is of the form $\Box B$. 
Assume that $T$ proves $f_{\sigma}(B) \leftrightarrow f_{\tau}(B)$. 
Then, by Fact \ref{DC}, $\IS \vdash \PR_{\tau}(\gdl{f_{\sigma}(B)}) \leftrightarrow f_{\tau}(\Box B)$. 
Since $T \vdash f_{\sigma}(\Box B) \leftrightarrow \PR_{\tau}(\gdl{f_{\sigma}(B)})$ by the supposition, we obtain that $f_{\sigma}(\Box B) \leftrightarrow f_{\tau}(\Box B)$ is provable in $T$. 
\end{proof}

\begin{cor}\label{IPPL3}
If $\Th(\PA) \subseteq \Th(T_0)$ and $\QPL_{\tau_0}(T_0) \subseteq \QPL_{\tau_1}(T_1)$, then $\QPL_{\tau_0 + \Con_{\tau_0}}(T_0 + \Con_{\tau_0}) \subseteq \QPL_{\tau_1 + \Con_{\tau_1}}(T_1 + \Con_{\tau_1})$.
\end{cor}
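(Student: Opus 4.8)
The plan is to derive the inclusion from Lemma \ref{eqQPL}, whose hypothesis I will verify by combining the formalized deduction theorem with Theorem \ref{IPPL1}, and then to transfer provability from $T_0 + \Con_{\tau_0}$ to $T_1 + \Con_{\tau_1}$ using Corollary \ref{VDCor1}.1. Throughout I write $\sigma_0 := \tau_0 + \Con_{\tau_0}$ and $\sigma_1 := \tau_1 + \Con_{\tau_1}$, which are $\Sigma_1$ definitions of $T_0 + \Con_{\tau_0}$ and $T_1 + \Con_{\tau_1}$ by the construction $(\tau + \psi)(v) := \tau(v) \lor v = \gdl{\psi}$.

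First I would use the formalized deduction theorem to rewrite the augmented provability predicates into the relativized form that appears in Theorem \ref{IPPL1}. Instantiating $\IS \vdash \forall x (\PR_{\tau + \varphi}(x) \leftrightarrow \PR_\tau(\gdl{\varphi} \dot{\to} x))$ at $\varphi := \Con_{\tau_0}$ and $x := \gdl{\varphi(\vec{\dot{x}})}$, and matching the term $\gdl{\Con_{\tau_0}} \dot{\to} \gdl{\varphi(\vec{\dot{x}})}$ with $\gdl{\Con_{\tau_0} \to \varphi(\vec{\dot{x}})}$, gives for every $\mathcal{L}_A$-formula $\varphi(\vec{x})$ that
\[
	\IS \vdash \forall \vec{x} \left( \PR_{\sigma_0}(\gdl{\varphi(\vec{\dot{x}})}) \leftrightarrow \PR_{\tau_0}(\gdl{\Con_{\tau_0} \to \varphi(\vec{\dot{x}})}) \right),
\]
and likewise with every subscript $0$ replaced by $1$. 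Since $\Th(\PA) \subseteq \Th(T_0)$ and $\QPL_{\tau_0}(T_0) \subseteq \QPL_{\tau_1}(T_1)$ are exactly our hypotheses, Theorem \ref{IPPL1} supplies $T_1 \vdash \forall \vec{x} (\PR_{\tau_0}(\gdl{\Con_{\tau_0} \to \varphi(\vec{\dot{x}})}) \leftrightarrow \PR_{\tau_1}(\gdl{\Con_{\tau_1} \to \varphi(\vec{\dot{x}})}))$. Chaining these three equivalences inside $T_1$ and weakening to $T_1 + \Con_{\tau_1}$ (harmless, as $T_1 \subseteq T_1 + \Con_{\tau_1}$) yields
\[
	T_1 + \Con_{\tau_1} \vdash \forall \vec{x} \left( \PR_{\sigma_0}(\gdl{\varphi(\vec{\dot{x}})}) \leftrightarrow \PR_{\sigma_1}(\gdl{\varphi(\vec{\dot{x}})}) \right).
\]

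Next I would apply Lemma \ref{eqQPL} with the ambient theory taken to be $T_1 + \Con_{\tau_1}$, its definition $\tau$ taken to be $\sigma_1$, and the auxiliary definition $\sigma$ taken to be $\sigma_0$. The displayed equivalence is precisely the hypothesis of Lemma \ref{eqQPL}, so I obtain that for every quantified modal formula $A$ and every arithmetical interpretation $f$,
\[
	T_1 + \Con_{\tau_1} \vdash f_{\sigma_0}(A) \leftrightarrow f_{\sigma_1}(A).
\]
Finally, let $A \in \QPL_{\sigma_0}(T_0 + \Con_{\tau_0})$, so $A$ is a sentence and $(T_0 + \Con_{\tau_0}) \vdash f_{\sigma_0}(A)$ for every $f$. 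By Corollary \ref{VDCor1}.1 (applicable since $\QPL_{\tau_0}(T_0) \subseteq \QPL_{\tau_1}(T_1)$) we have $\Th(T_0 + \Con_{\tau_0}) \subseteq \Th(T_1 + \Con_{\tau_1})$; as $f_{\sigma_0}(A)$ is an $\mathcal{L}_A$-sentence, this gives $(T_1 + \Con_{\tau_1}) \vdash f_{\sigma_0}(A)$, whence the equivalence above yields $(T_1 + \Con_{\tau_1}) \vdash f_{\sigma_1}(A)$. Since $f$ is arbitrary, $A \in \QPL_{\sigma_1}(T_1 + \Con_{\tau_1})$, which is the desired inclusion.

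This is essentially an assembly argument, so the routine bookkeeping is confirming that $\sigma_0, \sigma_1$ are the correct $\Sigma_1$ definitions and that the sentencehood of $A$ makes Corollary \ref{VDCor1}.1 applicable. I expect the only genuinely delicate point to be the first step: recognizing that the formalized deduction theorem is exactly the bridge converting Theorem \ref{IPPL1}'s conclusion about $\PR_{\tau_i}(\gdl{\Con_{\tau_i} \to \varphi})$ into a statement about the augmented predicates $\PR_{\sigma_i}$, and carefully matching the primitive recursive term $\gdl{\Con_{\tau_0}} \dot{\to} \gdl{\varphi(\vec{\dot{x}})}$ with $\gdl{\Con_{\tau_0} \to \varphi(\vec{\dot{x}})}$, so that Lemma \ref{eqQPL} can be invoked over the enlarged base $T_1 + \Con_{\tau_1}$.
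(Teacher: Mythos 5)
Your proposal is correct and follows essentially the same route as the paper's proof: transfer $f_{\tau_0+\Con_{\tau_0}}(A)$ to $T_1+\Con_{\tau_1}$ via Corollary \ref{VDCor1}.1, obtain the provable equivalence of the augmented provability predicates from Theorem \ref{IPPL1} (the paper leaves the formalized-deduction-theorem bridge implicit, which you spell out), and conclude with Lemma \ref{eqQPL} applied over $T_1+\Con_{\tau_1}$.
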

\begin{proof}
Suppose $\Th(\PA) \subseteq \Th(T_0)$ and $\QPL_{\tau_0}(T_0) \subseteq \QPL_{\tau_1}(T_1)$. 
Let $A$ be any element of $\QPL_{\tau_0 + \Con_{\tau_0}}(T_0 + \Con_{\tau_0})$ and $f$ be an arbitrary arithmetical interpretation. 
Then, $T_0 + \Con_{\tau_0} \vdash f_{\tau_0 + \Con_{\tau_0}}(A)$. 
Since $\Th(T_0 + \Con_{\tau_0}) \subseteq \Th(T_1 + \Con_{\tau_1})$ by Corollary \ref{VDCor1}.1, $T_1 + \Con_{\tau_1} \vdash f_{\tau_0 + \Con_{\tau_0}}(A)$. 
By Theorem \ref{IPPL1}, for any $\mathcal{L}_A$-formula $\varphi(\vec{x})$, 
\[
	T_1 \vdash \forall \vec{x} \left(\PR_{\tau_0 + \Con_{\tau_0}}(\gdl{\varphi(\vec{\dot{x}})}) \leftrightarrow \PR_{\tau_1 + \Con_{\tau_1}}(\gdl{\varphi(\vec{\dot{x}})}) \right). 
\]
Thus by Lemma \ref{eqQPL}, $T_1 + \Con_{\tau_1} \vdash f_{\tau_0 + \Con_{\tau_0}}(A) \leftrightarrow f_{\tau_1 + \Con_{\tau_1}}(A)$, and hence $T_1 + \Con_{\tau_1} \vdash f_{\tau_1 + \Con_{\tau_1}}(A)$. 
Since $f$ is arbitrary, $A$ is contained in $\QPL_{\tau_1 + \Con_{\tau_1}}(T_1 + \Con_{\tau_1})$. 
\end{proof}

Moreover, we strengthen Proposition \ref{Basic1} and Corollary \ref{IPPL3}. 

\begin{defn}
We define a sequence $(\Con_\tau^n)_{n \in \N}$ of $\Pi_1$ consistency statements of $T$ inductively as follows: 
\begin{enumerate}
	\item $\Con_\tau^0 :\equiv 0 = 0$; and
	\item $\Con_\tau^{n+1} :\equiv \Con_{\tau + \Con_{\tau}^n}$. 
\end{enumerate}
\end{defn}

Since $\neg \Con_{\tau}^n$ is a $\Sigma_1$ sentence, $\IS \vdash \neg \Con_{\tau}^n \to \PR_{\tau}(\gdl{\neg \Con_\tau^n})$ by Fact \ref{DC}.3. 
Equivalently, $\IS \vdash \Con_\tau^{n+1} \to \Con_\tau^n$. 
Thus $\Con_\tau^n \land \Con_{\tau + \Con_\tau^n}$ is provably equivalent to $\Con_\tau^{n+1}$ over $\IS$.  

\begin{cor}\label{IPPL4}
If $\Th(\PA) \subseteq \Th(T_0)$ and $\QPL_{\tau_0}(T_0) \subseteq \QPL_{\tau_1}(T_1)$, then for any natural number $n \geq 1$, 
\begin{enumerate}
	\item $\QPL_{\tau_0 + \Con_{\tau_0}^n}(T_0 + \Con_{\tau_0}^n) \subseteq \QPL_{\tau_1 + \Con_{\tau_1}^n}(T_1 + \Con_{\tau_1}^n)$; and
	\item $T_1 \vdash \Con_{\tau_0}^n \leftrightarrow \Con_{\tau_1}^n$. 
\end{enumerate}
\end{cor}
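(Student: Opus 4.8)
The plan is to prove Clauses~1 and~2 simultaneously by induction on $n$, starting the induction at $n = 0$. For $n = 0$ the statement is essentially the hypothesis: $\Con_{\tau_i}^0$ is $0 = 0$, so Clause~2 reads $T_1 \vdash (0 = 0) \leftrightarrow (0 = 0)$ and is trivial, while Clause~1 reduces to the assumed inclusion $\QPL_{\tau_0}(T_0) \subseteq \QPL_{\tau_1}(T_1)$ once one observes, via the formalized deduction theorem and Lemma~\ref{eqQPL}, that passing from $\tau_i$ to $\tau_i + (0=0)$ changes neither the set of theorems of the theory nor the associated quantified provability logic. The genuine work is in the inductive step, and the cases $n \geq 1$ fall out of it.

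For the inductive step, suppose both clauses hold for $n$. Put $\sigma_i := \tau_i + \Con_{\tau_i}^n$ and $S_i := T_i + \Con_{\tau_i}^n$ for $i \in \{0,1\}$; these are $\Sigma_1$ definitions of recursively enumerable extensions of $\IS$, Clause~1 for $n$ is exactly $\QPL_{\sigma_0}(S_0) \subseteq \QPL_{\sigma_1}(S_1)$, and $\Con_{\sigma_i}$ is literally the sentence $\Con_{\tau_i}^{n+1}$. To get Clause~2 for $n+1$, I would apply Proposition~\ref{Basic1} to $\QPL_{\sigma_0}(S_0) \subseteq \QPL_{\sigma_1}(S_1)$, obtaining $S_1 \vdash \Con_{\tau_0}^{n+1} \leftrightarrow \Con_{\tau_1}^{n+1}$, that is, $T_1 + \Con_{\tau_1}^n \vdash \Con_{\tau_0}^{n+1} \leftrightarrow \Con_{\tau_1}^{n+1}$. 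The extra hypothesis $\Con_{\tau_1}^n$ is then discharged by a case split inside $T_1$: in the case $\neg \Con_{\tau_1}^n$, Clause~2 for $n$ gives $\neg \Con_{\tau_0}^n$, and since $\IS \vdash \Con_{\tau_i}^{n+1} \to \Con_{\tau_i}^n$ both $\Con_{\tau_0}^{n+1}$ and $\Con_{\tau_1}^{n+1}$ fail, so the biconditional holds vacuously. Hence $T_1 \vdash \Con_{\tau_0}^{n+1} \leftrightarrow \Con_{\tau_1}^{n+1}$.

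For Clause~1 for $n+1$, I would note $\Th(\PA) \subseteq \Th(T_0) \subseteq \Th(S_0)$ and apply Corollary~\ref{IPPL3} to $\QPL_{\sigma_0}(S_0) \subseteq \QPL_{\sigma_1}(S_1)$, which yields
\[
	\QPL_{\sigma_0 + \Con_{\sigma_0}}(S_0 + \Con_{\sigma_0}) \subseteq \QPL_{\sigma_1 + \Con_{\sigma_1}}(S_1 + \Con_{\sigma_1}).
\]
Because $\Con_{\sigma_i} = \Con_{\tau_i}^{n+1}$, the definition $\sigma_i + \Con_{\sigma_i}$ is the nested definition $(\tau_i + \Con_{\tau_i}^n) + \Con_{\tau_i}^{n+1}$, while Clause~1 is phrased with the direct definition $\tau_i + \Con_{\tau_i}^{n+1}$; the remaining task, and the main obstacle, is to identify these. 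Applying the formalized deduction theorem once to $\tau_i + \Con_{\tau_i}^{n+1}$ and twice to the nested definition reduces $\PR_{\tau_i + \Con_{\tau_i}^{n+1}}(\gdl{\varphi})$ and $\PR_{(\tau_i + \Con_{\tau_i}^n) + \Con_{\tau_i}^{n+1}}(\gdl{\varphi})$, over $\IS$, to $\PR_{\tau_i}(\gdl{\Con_{\tau_i}^{n+1} \to \varphi})$ and $\PR_{\tau_i}(\gdl{\Con_{\tau_i}^n \to (\Con_{\tau_i}^{n+1} \to \varphi)})$; since $\IS \vdash \Con_{\tau_i}^{n+1} \to \Con_{\tau_i}^n$ makes the two inner formulas provably equivalent uniformly in $\varphi$, the derivability conditions (Fact~\ref{DC}) lift this to a provable equivalence of the two provability predicates on all formula codes. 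Feeding this into Lemma~\ref{eqQPL} with the theory $T_i + \Con_{\tau_i}^{n+1}$ gives $\QPL_{(\tau_i + \Con_{\tau_i}^n) + \Con_{\tau_i}^{n+1}}(T_i + \Con_{\tau_i}^{n+1}) = \QPL_{\tau_i + \Con_{\tau_i}^{n+1}}(T_i + \Con_{\tau_i}^{n+1})$, and using that $S_i + \Con_{\sigma_i}$ and $T_i + \Con_{\tau_i}^{n+1}$ have the same theorems, the displayed inclusion becomes Clause~1 for $n+1$. Everything except this last identification is routine bookkeeping with the derivability conditions and the monotonicity $\IS \vdash \Con_{\tau_i}^{n+1} \to \Con_{\tau_i}^n$; the delicate point is that Corollary~\ref{IPPL3} produces a syntactically different (nested) definition, so the inclusion cannot be read off verbatim and must be transported across Lemma~\ref{eqQPL}.
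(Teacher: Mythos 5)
Your proof is correct and follows essentially the same route as the paper: Clause~1's inductive step via Corollary~\ref{IPPL3} applied to the shifted definitions $\tau_i + \Con_{\tau_i}^n$, and Clause~2's step via Proposition~\ref{Basic1} applied to that shifted inclusion, with the extra hypothesis $\Con_{\tau_1}^n$ discharged using the induction hypothesis and $\IS \vdash \Con_{\tau_i}^{n+1} \to \Con_{\tau_i}^n$ (your case split is just a reorganization of the paper's direction-by-direction argument). The only real difference is presentational: you start the induction at $n=0$ and spell out, via the formalized deduction theorem and Lemma~\ref{eqQPL}, the identification of the nested definition $(\tau_i + \Con_{\tau_i}^n) + \Con_{\tau_i}^{n+1}$ with $\tau_i + \Con_{\tau_i}^{n+1}$ --- bookkeeping the paper compresses into the remark that $\Con_{\tau_i}^n \land \Con_{\tau_i + \Con_{\tau_i}^n}$ is equivalent to $\Con_{\tau_i}^{n+1}$.
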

\begin{proof}
Suppose $\Th(\PA) \subseteq \Th(T_0)$ and $\QPL_{\tau_0}(T_0) \subseteq \QPL_{\tau_1}(T_1)$. 

1. By induction on $n \geq 1$. 
For $n=1$, the statement is exactly Corollary \ref{IPPL3}. 
Suppose $\QPL_{\tau_0 + \Con_{\tau_0}^n}(T_0 + \Con_{\tau_0}^n) \subseteq \QPL_{\tau_1 + \Con_{\tau_1}^n}(T_1 + \Con_{\tau_1}^n)$. 
As commented above, $\Con_{\tau_i}^n \land \Con_{\tau_i + \Con_{\tau_i}^n}$ is equivalent to $\Con_{\tau_i}^{n+1}$ for $i \in \{0, 1\}$, and hence by Corollary \ref{IPPL3}, 
\[
	\QPL_{\tau_0 + \Con_{\tau_0}^{n+1}}(T_0 + \Con_{\tau_0}^{n+1}) \subseteq \QPL_{\tau_1 + \Con_{\tau_1}^{n+1}}(T_1 + \Con_{\tau_1}^{n+1}). 
\]

2. By induction on $n \geq 1$. 
For $n=1$, the statement is exactly Proposition \ref{Basic1}. 
Suppose $T_1 \vdash \Con_{\tau_0}^n \leftrightarrow \Con_{\tau_1}^n$. 
By Clause 1, 
\[
	\QPL_{\tau_0 + \Con_{\tau_0}^n}(T_0 + \Con_{\tau_0}^n) \subseteq \QPL_{\tau_1 + \Con_{\tau_1}^n}(T_1 + \Con_{\tau_1}^n).
\] 
Then by Proposition \ref{Basic1}, $T_1 + \Con_{\tau_1}^n$ proves $\Con_{\tau_0 + \Con_{\tau_0}^n} \leftrightarrow \Con_{\tau_1 + \Con_{\tau_1}^n}$. 
This means 
\begin{equation}\label{eqX}
	T_1 + \Con_{\tau_1}^n \vdash \Con_{\tau_0}^{n+1} \leftrightarrow \Con_{\tau_1}^{n+1}.
\end{equation}
We prove $T_1 \vdash \Con_{\tau_0}^{n+1} \leftrightarrow \Con_{\tau_1}^{n+1}$. 
Since $T_1 + \Con_{\tau_1}^{n+1} \vdash \Con_{\tau_1}^n$, it follows from (\ref{eqX}) that $T_1 + \Con_{\tau_1}^{n+1} \vdash \Con_{\tau_0}^{n+1}$. 
Conversely, since $T_1 + \Con_{\tau_0}^{n+1} \vdash \Con_{\tau_0}^n$, $T_1 + \Con_{\tau_0}^{n+1} \vdash \Con_{\tau_1}^n$ by induction hypothesis. 
Then, $T_1 + \Con_{\tau_0}^{n+1} \vdash \Con_{\tau_1}^{n+1}$ from (\ref{eqX}). 
\end{proof}

Under certain suppositions, we give the following necessary and sufficient condition for $\QPL_{\tau_0}(T_0) \subseteq \QPL_{\tau_1}(T_1)$. 

\begin{cor}\label{QPL_characterization}
Suppose that $\Th(T_0) \subseteq \Th(T_1)$ and there exists a $\Pi_1$ sentence $\pi$ satisfying the following two conditions: 
\begin{itemize}
	\item $T_0 \vdash \Con_{\tau_0} \to \neg \PR_{\tau_0}(\gdl{\pi})$; 
	\item $T_1 \vdash \PR_{\tau_1}(\gdl{\pi})$. 
\end{itemize}
Then, $\QPL_{\tau_0}(T_0) \subseteq \QPL_{\tau_1}(T_1)$ if and only if $T_1 \vdash \neg \Con_{\tau_0} \land \neg \Con_{\tau_1}$. 
\end{cor}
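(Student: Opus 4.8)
The plan is to prove the two directions separately, leaning on the $\Pi_1$-transfer result Theorem \ref{Pi1_2} together with Proposition \ref{Basic1} for the forward implication, and on Lemma \ref{eqQPL} for the backward one; the sentence $\pi$ will only be needed for the forward direction.

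For the forward direction I would argue as follows. Assuming $\QPL_{\tau_0}(T_0) \subseteq \QPL_{\tau_1}(T_1)$ and using that $\pi$ is $\Pi_1$, Theorem \ref{Pi1_2} gives $T_1 \vdash \PR_{\tau_1}(\gdl{\pi}) \to \PR_{\tau_0}(\gdl{\pi})$, which combined with the hypothesis $T_1 \vdash \PR_{\tau_1}(\gdl{\pi})$ yields $T_1 \vdash \PR_{\tau_0}(\gdl{\pi})$. Meanwhile the contrapositive of $T_0 \vdash \Con_{\tau_0} \to \neg \PR_{\tau_0}(\gdl{\pi})$, transported to $T_1$ via $\Th(T_0) \subseteq \Th(T_1)$, reads $T_1 \vdash \PR_{\tau_0}(\gdl{\pi}) \to \neg \Con_{\tau_0}$; hence $T_1 \vdash \neg \Con_{\tau_0}$. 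Proposition \ref{Basic1} then supplies $T_1 \vdash \Con_{\tau_0} \leftrightarrow \Con_{\tau_1}$, so $T_1 \vdash \neg \Con_{\tau_1}$ too, giving $T_1 \vdash \neg \Con_{\tau_0} \land \neg \Con_{\tau_1}$.

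For the backward direction, assume $T_1 \vdash \neg \Con_{\tau_0} \land \neg \Con_{\tau_1}$. The key point I would establish is that each inconsistent provability predicate provably proves everything, uniformly in its parameters: for $i \in \{0,1\}$ and any $\varphi(\vec{x})$, from $\IS \vdash 0 = \overline{1} \to \varphi(\vec{x})$ together with Fact \ref{DC}.1 and Fact \ref{DC}.2 one gets $\IS \vdash \neg \Con_{\tau_i} \to \PR_{\tau_i}(\gdl{\varphi(\vec{\dot{x}})})$, and since $T_1 \vdash \neg \Con_{\tau_i}$ this gives $T_1 \vdash \forall \vec{x}\, \PR_{\tau_i}(\gdl{\varphi(\vec{\dot{x}})})$. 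Both sides of $\PR_{\tau_0}(\gdl{\varphi(\vec{\dot{x}})}) \leftrightarrow \PR_{\tau_1}(\gdl{\varphi(\vec{\dot{x}})})$ are thus provable in $T_1$, so Lemma \ref{eqQPL} applies and delivers $T_1 \vdash f_{\tau_0}(A) \leftrightarrow f_{\tau_1}(A)$ for every quantified modal formula $A$ and arithmetical interpretation $f$. Then for $A \in \QPL_{\tau_0}(T_0)$ I have $T_0 \vdash f_{\tau_0}(A)$, hence $T_1 \vdash f_{\tau_0}(A)$ by $\Th(T_0) \subseteq \Th(T_1)$, hence $T_1 \vdash f_{\tau_1}(A)$; since $f$ is arbitrary, $A \in \QPL_{\tau_1}(T_1)$.

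I expect no genuine obstacle here: the forward direction is a short deduction chaining the stated facts about $\pi$ against Theorem \ref{Pi1_2} and Proposition \ref{Basic1}, while the only step in the backward direction requiring care is the uniform triviality of the provability predicates, where Fact \ref{DC} must be applied with free variables in order to reach the $\forall \vec{x}$ form that Lemma \ref{eqQPL} consumes.
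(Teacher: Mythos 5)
Your proposal is correct and takes essentially the same route as the paper: the forward direction chains Theorem \ref{Pi1_2} and Proposition \ref{Basic1} against the two conditions on $\pi$, and the backward direction combines the formalized fact that an inconsistent provability predicate proves everything (uniformly in parameters) with Lemma \ref{eqQPL}, exactly as in the paper. The only (harmless) divergence is in the forward direction, where you transport $T_0 \vdash \Con_{\tau_0} \to \neg\PR_{\tau_0}(\gdl{\pi})$ to $T_1$ via the hypothesis $\Th(T_0) \subseteq \Th(T_1)$ and obtain $T_1 \vdash \neg\Con_{\tau_0}$ first, whereas the paper instead invokes Corollary \ref{VDCor1}.1 to get $T_1 + \Con_{\tau_1} \vdash \neg\PR_{\tau_0}(\gdl{\pi})$ and obtains $T_1 \vdash \neg\Con_{\tau_1}$ first; both orderings close out the argument via Proposition \ref{Basic1}.
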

\begin{proof}
$(\Rightarrow)$: Suppose $\QPL_{\tau_0}(T_0) \subseteq \QPL_{\tau_1}(T_1)$. 
Let $\pi$ be a $\Pi_1$ sentence satisfying the two conditions stated above. 
By Theorem \ref{Pi1_2}, $\PR_{\tau_1}(\gdl{\pi}) \to \PR_{\tau_0}(\gdl{\pi})$ is provable in $T_1$, and hence $T_1 \vdash \PR_{\tau_0}(\gdl{\pi})$ by the choice of $\pi$. 
On the other hand, by Corollary \ref{VDCor1}.1, $\Th(T_0 + \Con_{\tau_0}) \subseteq \Th(T_1 + \Con_{\tau_1})$, and thus $T_1 + \Con_{\tau_1} \vdash \neg \PR_{\tau_0}(\gdl{\pi})$. 
Therefore $T_1 + \Con_{\tau_1}$ is inconsistent, and we obtain $T_1 \vdash \neg \Con_{\tau_1}$. 
By Proposition \ref{Basic1}, $T_1 \vdash \Con_{\tau_0} \to \Con_{\tau_1}$. 
Hence $T_1 \vdash \neg \Con_{\tau_0}$. 

$(\Leftarrow)$: Assume that $T_1$ proves $\neg \Con_{\tau_0}$ and $\neg \Con_{\tau_1}$. 
Then, for any $\mathcal{L}_A$-formula $\varphi(\vec{x})$, $T_1 \vdash \forall \vec{x}(\PR_{\tau_0}(\gdl{\varphi(\vec{\dot{x}})}) \leftrightarrow \PR_{\tau_1}(\gdl{\varphi(\vec{\dot{x}})}))$. 
Let $A$ be any element of $\QPL_{\tau_0}(T_0)$ and $f$ be any arithmetical interpretation. 
Then, $T_0 \vdash f_{\tau_0}(A)$. 
Since $\Th(T_0) \subseteq \Th(T_1)$, $T_1 \vdash f_{\tau_0}(A)$. 
By Lemma \ref{eqQPL}, $f_{\tau_0}(A) \leftrightarrow f_{\tau_1}(A)$ is provable in $T_1$, and hence $T_1 \vdash f_{\tau_1}(A)$. 
Therefore $A \in \QPL_{\tau_1}(T_1)$. 
We have proved $\QPL_{\tau_0}(T_0) \subseteq \QPL_{\tau_1}(T_1)$. 
\end{proof}

For example, for any $\Pi_1$ sentence $\pi$ satisfying $T_0 \vdash \Con_{\tau_0} \to \neg \PR_{\tau_0}(\gdl{\pi})$, the theories $T_0$ and $T_1 : = T_0 + \pi$ satisfy the assumption of Corollary \ref{QPL_characterization}. 
Corollary \ref{QPL_characterization} is used in the proof of Proposition \ref{Ex3} below.

\subsection{Some counterexamples}

In this subsection, we give some counterexamples to several statements. 
Before giving them, we prepare a lemma.

\begin{lem}\label{Sigma_1}
For any $\mathcal{L}_A$-sentence $\varphi$ with $T \vdash \varphi \to \PR_{\tau}(\gdl{\varphi})$, 
\[
	\QPL_{\tau}(T) \subseteq \QPL_{\tau + \varphi}(T + \varphi).
\]
\end{lem}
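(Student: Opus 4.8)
The plan is to reduce the inclusion to a single application of Lemma~\ref{eqQPL}. Fix an arbitrary sentence $A \in \QPL_\tau(T)$ and an arbitrary arithmetical interpretation $f$; by definition $T \vdash f_\tau(A)$, and hence $T + \varphi \vdash f_\tau(A)$. Thus it suffices to show that the two interpretations $f_\tau$ and $f_{\tau + \varphi}$ agree provably over $T + \varphi$, that is, $T + \varphi \vdash f_\tau(A) \leftrightarrow f_{\tau + \varphi}(A)$; combining this with $T + \varphi \vdash f_\tau(A)$ yields $T + \varphi \vdash f_{\tau + \varphi}(A)$, and since $f$ was arbitrary we conclude $A \in \QPL_{\tau + \varphi}(T + \varphi)$. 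To obtain the provable equivalence I would invoke Lemma~\ref{eqQPL} with the base theory taken to be $T + \varphi$, the definition $\tau + \varphi$ playing the role of the ambient definition, and $\tau$ playing the role of $\sigma$.

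The key step, and the only place where the hypothesis is used, is verifying the premise of Lemma~\ref{eqQPL}: namely that
\[
	T + \varphi \vdash \forall \vec{x}\left(\PR_\tau(\gdl{\psi(\vec{\dot{x}})}) \leftrightarrow \PR_{\tau + \varphi}(\gdl{\psi(\vec{\dot{x}})})\right)
\]
for every $\mathcal{L}_A$-formula $\psi(\vec{x})$. By the formalized deduction theorem, $\IS \vdash \PR_{\tau + \varphi}(\gdl{\psi(\vec{\dot{x}})}) \leftrightarrow \PR_\tau(\gdl{\varphi \to \psi(\vec{\dot{x}})})$, so it remains to compare $\PR_\tau(\gdl{\psi(\vec{\dot{x}})})$ with $\PR_\tau(\gdl{\varphi \to \psi(\vec{\dot{x}})})$. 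One direction holds already in $\IS$: from the tautology $\psi(\vec{x}) \to (\varphi \to \psi(\vec{x}))$ together with Fact~\ref{DC}.1 and Fact~\ref{DC}.2 we get $\IS \vdash \PR_\tau(\gdl{\psi(\vec{\dot{x}})}) \to \PR_\tau(\gdl{\varphi \to \psi(\vec{\dot{x}})})$, reflecting that $T + \varphi$ proves at least what $T$ does. For the converse I would first note that the hypothesis $T \vdash \varphi \to \PR_\tau(\gdl{\varphi})$ gives $T + \varphi \vdash \PR_\tau(\gdl{\varphi})$; then Fact~\ref{DC}.2 (formalized modus ponens) turns $\PR_\tau(\gdl{\varphi \to \psi(\vec{\dot{x}})})$ together with $\PR_\tau(\gdl{\varphi})$ into $\PR_\tau(\gdl{\psi(\vec{\dot{x}})})$, yielding the missing implication over $T + \varphi$.

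With the premise of Lemma~\ref{eqQPL} established, the lemma delivers $T + \varphi \vdash f_\tau(A) \leftrightarrow f_{\tau + \varphi}(A)$ for every $A$ and $f$, completing the argument as described. I expect no serious obstacle here; the only point requiring care is the asymmetry between the two directions of the provability equivalence: one is free, while the other is exactly where the assumption $T \vdash \varphi \to \PR_\tau(\gdl{\varphi})$ is needed in order to supply $\PR_\tau(\gdl{\varphi})$ inside $T + \varphi$. Note also that consistency of $T + \varphi$ is never used, since all of the reasoning is purely syntactic; the inclusion therefore holds trivially even when $T + \varphi$ is inconsistent.
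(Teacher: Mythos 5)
Your proof is correct and follows essentially the same route as the paper's own: both arguments reduce the inclusion to Lemma~\ref{eqQPL} applied over $T+\varphi$ with $\sigma = \tau$, verifying its premise $T + \varphi \vdash \forall \vec{x}\,(\PR_{\tau}(\gdl{\psi(\vec{\dot{x}})}) \leftrightarrow \PR_{\tau+\varphi}(\gdl{\psi(\vec{\dot{x}})}))$ by combining the formalized deduction theorem with the fact that the hypothesis yields $T+\varphi \vdash \PR_\tau(\gdl{\varphi})$, and closing the loop with Fact~\ref{DC}.2. There is no gap; your closing observations (where the hypothesis is used, and that consistency of $T+\varphi$ is irrelevant) are accurate.
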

\begin{proof}
Suppose $T \vdash \varphi \to \PR_{\tau}(\gdl{\varphi})$. 
Let $A$ be any element of $\QPL_\tau(T)$ and $f$ be any arithmetical interpretation. 
Then, $T \vdash f_{\tau}(A)$. 
Since $T + \varphi$ proves $\PR_\tau(\gdl{\varphi})$, for any $\mathcal{L}_A$-formula $\psi(\vec{x})$, it follows from Fact \ref{DC}.2 that
\begin{align*}
	T + \varphi \vdash \PR_{\tau + \varphi}(\gdl{\psi(\vec{\dot{x}})}) & \leftrightarrow \PR_{\tau}(\gdl{\varphi \to \psi(\vec{\dot{x}})}), \\
	& \leftrightarrow \PR_{\tau}(\gdl{\psi(\vec{\dot{x}})}). 
\end{align*}
Then by Lemma \ref{eqQPL}, $T + \varphi \vdash f_{\tau}(A) \leftrightarrow f_{\tau + \varphi}(A)$. 
Hence $T + \varphi \vdash f_{\tau + \varphi}(A)$. 
We conclude $\QPL_{\tau}(T) \subseteq \QPL_{\tau + \varphi}(T + \varphi)$.
\end{proof}

The following two propositions show that in the statement of Corollary \ref{DeducEquiv}, the assumption of the $\Sigma_1$-soundness of $T_1$ cannot be omitted. 

\begin{prop}\label{Sigma1_3}
There exist consistent recursively enumerable extensions $T_0$ and $T_1$ of $\IS$ and respective $\Sigma_1$ definitions $\tau_0(v)$ and $\tau_1(v)$ of $T_0$ and $T_1$ satisfying the following conditions: 
\begin{enumerate}
	\item $\QPL_{\tau_0}(T_0) \subseteq \QPL_{\tau_1}(T_1)$; 
	\item $T_0 + \Con_{\tau_0}$ and $T_1 + \Con_{\tau_1}$ are consistent; and
	\item $\Th(T_1 + \Con_{\tau_1}) \nsubseteq \Th(T_0 + \Con_{\tau_0})$.
\end{enumerate}
\end{prop}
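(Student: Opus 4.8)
The plan is to realize $T_1$ as $T_0$ plus a single \emph{false} $\Sigma_1$ sentence, so that Lemma~\ref{Sigma_1} delivers the inclusion of quantified provability logics for free, and then to choose that sentence so that $\Con_{\tau_1}$ collapses to $\Con_{\tau_0}$ over $T_1$ while $T_1$ itself is no longer $\Sigma_1$-sound. Concretely, I would fix a standard $\Sigma_1$ definition $\tau_0(v)$ of $T_0 := \IS$, put $\varphi :\equiv \neg \Con_{\tau_0}^2$, and set $T_1 := T_0 + \varphi$ together with $\tau_1 :\equiv \tau_0 + \varphi$. Since $\varphi$ is $\Sigma_1$, Fact~\ref{DC}.3 gives $\IS \vdash \varphi \to \PR_{\tau_0}(\gdl{\varphi})$, so Lemma~\ref{Sigma_1} immediately yields $\QPL_{\tau_0}(T_0) \subseteq \QPL_{\tau_1}(T_1)$, settling condition~1. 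That $T_1$ is a consistent r.e.\ extension of $\IS$ follows from $\IS + \Con_{\tau_0} \nvdash \Con_{\tau_0}^2$ below, which gives $\IS \nvdash \Con_{\tau_0}^2$, i.e.\ the consistency of $\IS + \neg \Con_{\tau_0}^2$.

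Next I would compute $\Con_{\tau_1}$. By the formalized deduction theorem, $\Con_{\tau_1}$ is provably equivalent over $\IS$ to $\neg \PR_{\tau_0}(\gdl{\neg \varphi})$, that is, to $\neg \PR_{\tau_0}(\gdl{\Con_{\tau_0}^2})$. Working inside $T_1 = T_0 + \varphi$, which proves $\PR_{\tau_0}(\gdl{\varphi})$ because $\varphi \in \Sigma_1$, one checks $\PR_{\tau_0}(\gdl{\Con_{\tau_0}^2}) \leftrightarrow \neg \Con_{\tau_0}$: the forward direction uses that $\PR_{\tau_0}(\gdl{\varphi})$ and $\PR_{\tau_0}(\gdl{\Con_{\tau_0}^2})$ together force $\PR_{\tau_0}(\gdl{0 = \overline{1}})$, and the converse is the usual fact that an inconsistency proves everything. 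Hence $T_1 + \Con_{\tau_1}$ is the same theory as $\IS + \Con_{\tau_0} + \neg \Con_{\tau_0}^2$.

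With this identification the remaining conditions fall out. Since $\IS$ is true in $\N$ and $\IS$ is consistent, $T_0 + \Con_{\tau_0} = \IS + \Con_{\tau_0}$ is true in $\N$, hence consistent and $\Sigma_1$-sound; this gives half of condition~2. The other half, consistency of $T_1 + \Con_{\tau_1}$, holds precisely because $\IS + \Con_{\tau_0}$ does not prove $\Con_{\tau_0}^2 = \Con_{\tau_0 + \Con_{\tau_0}}$, which is G\"odel's second incompleteness theorem applied to the consistent r.e.\ theory $\IS + \Con_{\tau_0}$. For condition~3 I would take $\varphi$ itself as the separating sentence: trivially $T_1 + \Con_{\tau_1} \vdash \varphi$, whereas $\varphi$ is a \emph{false} $\Sigma_1$ sentence and $T_0 + \Con_{\tau_0}$ is true in $\N$, so $T_0 + \Con_{\tau_0} \nvdash \varphi$; thus $\Th(T_1 + \Con_{\tau_1}) \nsubseteq \Th(T_0 + \Con_{\tau_0})$. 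Note that $T_1$ proves the false $\Sigma_1$ sentence $\varphi$, so $T_1$ is not $\Sigma_1$-sound, which is exactly why Corollary~\ref{DeducEquiv} does not apply.

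The one genuinely delicate point is the consistency of $T_1 + \Con_{\tau_1}$, and it is what dictates the choice of $\varphi$. One must resist the naive $\varphi :\equiv \neg \Con_{\tau_0}$: then $\Con_{\tau_1}$ would again reduce to $\Con_{\tau_0}$ over $T_1$ and $T_1 + \Con_{\tau_1} = \IS + \neg \Con_{\tau_0} + \Con_{\tau_0}$ would be outright inconsistent. Passing to the second consistency statement $\Con_{\tau_0}^2$ creates the needed gap: $\IS + \Con_{\tau_0}$ is consistent yet, by G\"odel's theorem, fails to prove $\Con_{\tau_0}^2$, so adjoining $\neg \Con_{\tau_0}^2$ keeps the theory consistent while rendering it $\Sigma_1$-unsound. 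Everything else is a routine application of the derivability conditions of Fact~\ref{DC} and the formalized deduction theorem.
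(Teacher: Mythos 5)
Your proposal is correct and is essentially the paper's own proof: the same witnesses $T_1 := T_0 + \neg\Con_{\tau_0}^2$ and $\tau_1 := \tau_0 + \neg\Con_{\tau_0}^2$ (the paper takes $T_0$ to be an arbitrary $\Sigma_1$-sound r.e.\ extension of $\IS$ where you fix $T_0 = \IS$), with condition~1 obtained from Lemma~\ref{Sigma_1} and condition~3 from the falsity of the $\Sigma_1$ sentence $\neg\Con_{\tau_0}^2$. Your handling of condition~2 --- first proving $T_1 \vdash \Con_{\tau_1} \leftrightarrow \Con_{\tau_0}$ and then applying G\"odel's second incompleteness theorem to $\IS + \Con_{\tau_0}$ --- is only a repackaging of the paper's reductio, since that instance of the second incompleteness theorem is exactly L\"ob's theorem for $T_0$ applied to $\PR_{\tau_0}(\gdl{\neg\Con_{\tau_0}}) \to \neg\Con_{\tau_0}$, which is the implication the paper derives.
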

\begin{proof}
Let $T_0$ be any $\Sigma_1$-sound recursively enumerable extension of $\IS$ and $\tau_0(v)$ be any $\Sigma_1$ definition of $T_0$. 
Also let $\varphi$ be the $\Sigma_1$ sentence $\neg \Con_{\tau_0}^2$. 
Then $\N \models \neg \varphi$. 
Let $T_1 : = T_0 + \varphi$ and $\tau_1(v)$ be $(\tau_0 + \varphi)(v)$. 

1. Since $\varphi$ is a $\Sigma_1$ sentence, $T_0 \vdash \varphi \to \PR_{\tau_0}(\ulcorner \varphi \urcorner)$ by Fact \ref{DC}.3. 
Then by Lemma \ref{Sigma_1}, $\QPL_{\tau_0}(T_0) \subseteq \QPL_{\tau_1}(T_1)$. 

2. Since $T_0$ is $\Sigma_1$-sound, $T_0 + \Con_{\tau_0}$ is consistent. 
Suppose, towards a contradiction, that $T_1 + \Con_{\tau_1}$ is inconsistent. 
Then $T_0 + \varphi \vdash \neg \Con_{\tau_0 + \varphi}$, and hence $T_0 \vdash \varphi \to \PR_{\tau_0}(\gdl{\neg \varphi})$. 
Since $T_0 \vdash \varphi \to \PR_{\tau_0}(\gdl{\varphi})$, we have $T_0 \vdash \varphi \to \neg \Con_{\tau_0}$. 
It follows $T_0 \vdash \PR_{\tau_0}(\gdl{\neg \Con_{\tau_0}}) \to \neg \Con_{\tau_0}$. 
By L\"ob's theorem, $T_0 \vdash \neg \Con_{\tau_0}$. 
This contradicts the $\Sigma_1$-soundness of $T_0$. 
Therefore $T_1 + \Con_{\tau_1}$ is consistent. 

3. Since $T_0 + \Con_{\tau_0}$ is also $\Sigma_1$-sound, $T_0 + \Con_{\tau_0} \nvdash \varphi$. 
On the other hand, $T_1 + \Con_{\tau_1} \vdash \varphi$, and hence $\Th(T_1 + \Con_{\tau_1}) \nsubseteq \Th(T_0 + \Con_{\tau_0})$. 
\end{proof}

\begin{prop}\label{notPicons}
There exist consistent recursively enumerable extensions $T_0$ and $T_1$ of $\IS$ and respective $\Sigma_1$ definitions $\tau_0(v)$ and $\tau_1(v)$ of $T_0$ and $T_1$ satisfying the following conditions: 
\begin{enumerate}
	\item $\QPL_{\tau_0}(T_0) \subseteq \QPL_{\tau_1}(T_1)$; and
	\item $\Th_{\Pi_1}(T_1) \nsubseteq \Th_{\Pi_1}(T_0)$.
\end{enumerate}
\end{prop}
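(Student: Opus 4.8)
The plan is to reuse the mechanism behind the $(\Leftarrow)$ direction of Corollary~\ref{QPL_characterization}: if $\Th(T_0) \subseteq \Th(T_1)$ and $T_1 \vdash \neg\Con_{\tau_0} \land \neg\Con_{\tau_1}$, then $\QPL_{\tau_0}(T_0) \subseteq \QPL_{\tau_1}(T_1)$, because both provability predicates become provably trivial in $T_1$ and Lemma~\ref{eqQPL} then identifies $f_{\tau_0}$ with $f_{\tau_1}$. The whole difficulty is to realize this together with a genuine $\Pi_1$ gap, and the guiding observation is that $T_1 \vdash \neg\Con_{\tau_1}$ forces $T_1$ to be $\Sigma_1$-unsound (a consistent theory always has $\Con_{\tau_1}$ true in $\N$); so I must build everything over a consistent but $\Sigma_1$-unsound base.

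Concretely, I would take $T_0 := \IS + \neg\Con_\IS$, which is consistent by G\"odel's second incompleteness theorem, and let $\tau_0$ be the $\Sigma_1$ definition $\iota + \neg\Con_\iota$ of $T_0$, where $\iota$ is a fixed $\Sigma_1$ definition of $\IS$. Since every $\iota$-proof is a $\tau_0$-proof, $\IS \vdash \PR_\iota(x) \to \PR_{\tau_0}(x)$, hence $\IS \vdash \neg\Con_\iota \to \neg\Con_{\tau_0}$, and as $T_0 \vdash \neg\Con_\iota$ I get $T_0 \vdash \neg\Con_{\tau_0}$. Next I would let $\pi$ be the Rosser sentence of $T_0$ with respect to $\tau_0$: it is a $\Pi_1$ sentence with $T_0 \nvdash \pi$ and $T_0 \nvdash \neg\pi$, and it is true in $\N$, because the absence of a $T_0$-proof of $\pi$ makes its defining universal matrix hold vacuously. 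I then set $T_1 := T_0 + \pi$ and $\tau_1 := \tau_0 + \pi$. Thus $T_1$ is a consistent (from $T_0 \nvdash \neg\pi$) recursively enumerable extension of $\IS$ with $\Th(T_0) \subseteq \Th(T_1)$, and since $\tau_1 \supseteq \tau_0$ gives $\IS \vdash \neg\Con_{\tau_0} \to \neg\Con_{\tau_1}$, I obtain $T_1 \vdash \neg\Con_{\tau_0} \land \neg\Con_{\tau_1}$.

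Condition~2 is then immediate: $T_1 \vdash \pi$ while $T_0 \nvdash \pi$, so the true $\Pi_1$ sentence $\pi$ witnesses $\Th_{\Pi_1}(T_1) \nsubseteq \Th_{\Pi_1}(T_0)$. For Condition~1 I would apply Corollary~\ref{QPL_characterization}, whose hypotheses hold with the trivial $\Pi_1$ sentence $0 = 0$: the first requirement $T_0 \vdash \Con_{\tau_0} \to \neg\PR_{\tau_0}(\gdl{0=0})$ is a consequence of $T_0 \vdash \neg\Con_{\tau_0}$, and the second $T_1 \vdash \PR_{\tau_1}(\gdl{0=0})$ follows from Fact~\ref{DC}.1; since we have verified $T_1 \vdash \neg\Con_{\tau_0} \land \neg\Con_{\tau_1}$, the corollary yields $\QPL_{\tau_0}(T_0) \subseteq \QPL_{\tau_1}(T_1)$.

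The main obstacle is the existence of $\pi$: I need a $\Pi_1$ sentence that is simultaneously true, unprovable in $T_0$, and consistent with $T_0$. Consistency-type candidates all fail, since $T_0$ refutes $\Con_S$ for every $S \supseteq \IS$; this is precisely why I invoke the Rosser sentence, whose independence — unlike that of the G\"odel sentence — survives the $\Sigma_1$-unsoundness of $T_0$ and whose truth is automatic. I would still verify the routine monotonicity facts $\IS \vdash \PR_\iota(x) \to \PR_{\tau_0}(x)$ and $\IS \vdash \PR_{\tau_0}(x) \to \PR_{\tau_1}(x)$ used above, and note that $T_1$ is indeed $\Sigma_1$-unsound (it proves the false $\Sigma_1$ sentence $\neg\Con_{\tau_1}$), so that this example genuinely shows the $\Sigma_1$-soundness hypothesis of Corollary~\ref{DeducEquiv} cannot be dropped.
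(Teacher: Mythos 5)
Your proof is correct, but it takes a genuinely different route from the paper's. The paper keeps $T_0$ completely arbitrary (any consistent r.e.\ extension of $\IS$, possibly $\Sigma_1$-sound) and sets $T_1 := T_0 + \neg\rho$, where $\rho$ is a $\Pi_1$ Rosser sentence of $T_0$: the inclusion $\QPL_{\tau_0}(T_0) \subseteq \QPL_{\tau_1}(T_1)$ then follows from Lemma \ref{Sigma_1}, because the added axiom $\neg\rho$ is $\Sigma_1$ and hence provably self-provable, and the $\Pi_1$ gap is witnessed not by $\rho$ itself but by an auxiliary $\Pi_1$ sentence $\pi$ (a dual Rosser sentence) with $\IS \vdash \rho \lor \pi$ and $\IS \vdash \rho \land \pi \to \Con_{\tau_0}$, together with $T_0 \nvdash \rho \to \Con_{\tau_0}$. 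You instead force both theories to prove their own inconsistency by starting from the $\Sigma_1$-unsound base $T_0 = \IS + \neg\Con_{\iota}$, add the $\Pi_1$ Rosser sentence of $T_0$ itself as the new axiom, and obtain the inclusion from the $(\Leftarrow)$ direction of Corollary \ref{QPL_characterization} (equivalently, Lemma \ref{eqQPL} plus the observation that $T_1 \vdash \neg\Con_{\tau_0} \land \neg\Con_{\tau_1}$ makes both provability predicates provably trivial); the $\Pi_1$ gap is then immediate since the witness is an axiom of $T_1$. Your instantiation of the blanket hypothesis of Corollary \ref{QPL_characterization} by $0 = 0$ is legitimate: only the $(\Leftarrow)$ direction is needed, $T_0 \vdash \neg\Con_{\tau_0}$ makes the first condition vacuous, and $T_1 \vdash \PR_{\tau_1}(\gdl{0=0})$ follows from Fact \ref{DC}.1 (if one insists that a $\Pi_1$ sentence begin with a universal quantifier, use $\forall x\, (x = x)$ instead). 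What the paper's route buys is generality: the phenomenon occurs over every consistent base theory, including $\Sigma_1$-sound ones such as $\PA$, and only $T_1$ is driven $\Sigma_1$-unsound. What your route buys is that both verifications become essentially trivial (provable inconsistency collapses the two provability predicates, and the gap witness is an axiom), at the cost of a degenerate, $\Sigma_1$-unsound $T_0$. Two minor remarks: the truth of the Rosser sentence in $\N$, which you argue for, is never actually used (only $T_0 \nvdash \pi$ and $T_0 \nvdash \neg\pi$ matter); and your monotonicity facts $\IS \vdash \PR_{\iota}(x) \to \PR_{\tau_0}(x)$ and $\IS \vdash \PR_{\tau_0}(x) \to \PR_{\tau_1}(x)$ do hold, since with the paper's definition of $(\tau+\varphi)(v)$ the implications $\iota(v) \to \tau_0(v)$ and $\tau_0(v) \to \tau_1(v)$ are logically valid and the proof predicate is provably monotone in the axiom-defining formula.
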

\begin{proof}
Let $T_0$ be an arbitrary consistent recursively enumerable extension of $\IS$ and $\tau_0(v)$ be any $\Sigma_1$ definition of $T_0$. 
Let $\rho$ be a $\Pi_1$ Rosser sentence of $T_0$ defined by using $\tau_0(v)$, and let $T_1 : = T_0 + \neg \rho$ and $\tau_1(v)$ be $(\tau_0 + \neg \rho)(v)$. 
By Rosser's theorem, $T_1$ is consistent. 
Since $\neg \rho$ is $\Sigma_1$, by Lemma \ref{Sigma_1}, $\QPL_{\tau_0}(T_0) \subseteq \QPL_{\tau_1}(T_1)$. 
It is easily shown that there exists a $\Pi_1$ sentence $\pi$ such that $\IS \vdash \rho \lor \pi$ and $\IS \vdash \rho \land \pi \to \Con_{\tau_0}$. 
Then $T_1 \vdash \pi$ and $T_0 \nvdash \pi$ because $T_0 \nvdash \rho \to \Con_{\tau_0}$. 
Therefore $\Th_{\Pi_1}(T_1) \nsubseteq \Th_{\Pi_1}(T_0)$ (see also Lindstr\"om \cite[Chapter 5 Exercise 1]{Lin03}). 
\end{proof}

The following proposition shows that the converse implications of Proposition \ref{Basic1}, Theorem \ref{IPPL1} and Corollary \ref{DeducEquiv} do not hold. 

\begin{prop}\label{Ex3}
There exist consistent recursively enumerable extensions $T_0$ and $T_1$ of $\IS$ and respective $\Sigma_1$ definitions $\tau_0(v)$ and $\tau_1(v)$ of $T_0$ and $T_1$ satisfying the following conditions:
\begin{enumerate}
	\item $\IS \vdash \Con_{\tau_0} \leftrightarrow \Con_{\tau_1}$; 
	\item $T_1$ is $\Sigma_1$-sound and $\Th(T_0 + \Con_{\tau_0}) = \Th(T_1 + \Con_{\tau_1})$; 
	\item For any $\mathcal{L}_A$-formula $\varphi(\vec{x})$,
\[
	\IS \vdash \forall \vec{x} \left(\PR_{\tau_0}(\gdl{\Con_{\tau_0} \to \varphi(\vec{\dot{x}})}) \leftrightarrow \PR_{\tau_1}(\gdl{\Con_{\tau_1} \to \varphi(\vec{\dot{x}})})\right); 
\]
	\item $\QPL_{\tau_0}(T_0) \nsubseteq \QPL_{\tau_1}(T_1)$. 
\end{enumerate}
\end{prop}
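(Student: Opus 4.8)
The plan is to build $T_0$, $T_1$ so that they share identical provability behaviour relative to their own consistency (yielding conditions 1--3 cheaply), yet differ on some theorem that is \emph{not} of the guarded form $\Con_{\tau_i} \to \varphi$, so that condition 4 can be extracted from Corollary \ref{VDCor1}.1 or, more efficiently, from Corollary \ref{QPL_characterization}. The natural construction is to start from a $\Sigma_1$-sound base $T_0$ extending $\IS$ with a $\Sigma_1$ definition $\tau_0(v)$, and to define $T_1$ by adding a single $\Pi_1$ sentence $\pi$ to $T_0$, where $\pi$ is chosen so that $T_0 \vdash \Con_{\tau_0} \to \neg \PR_{\tau_0}(\gdl{\pi})$ but $\pi$ is nonetheless genuinely stronger than what $T_0 + \Con_{\tau_0}$ can see. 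The crucial point is that $\pi$ should be provable in $T_1$ (trivially, as an axiom) while $T_0 + \Con_{\tau_0} \nvdash \pi$, so that $\QPL_{\tau_0}(T_0) \nsubseteq \QPL_{\tau_1}(T_1)$ follows by the contrapositive of Corollary \ref{VDCor1}.1 applied in the reverse direction.

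First I would invoke the remark immediately following Corollary \ref{QPL_characterization}: for any $\Pi_1$ sentence $\pi$ with $T_0 \vdash \Con_{\tau_0} \to \neg \PR_{\tau_0}(\gdl{\pi})$, the pair $T_0$, $T_1 := T_0 + \pi$ satisfies the hypotheses of that corollary, with $\tau_1 := (\tau_0 + \pi)(v)$. So I would select $\pi$ to be a suitable $\Pi_1$ sentence of this kind; a Rosser-style or fixed-point construction produces a $\pi$ that is independent of $T_0 + \Con_{\tau_0}$ while keeping $T_1$ $\Sigma_1$-sound and consistent. Then conditions 1 and 3 would follow from the observation that, since $\Con_{\tau_0}$ and $\Con_{\tau_1}$ turn out to be $\IS$-provably equivalent (because adding a $\Pi_1$ axiom via the deduction theorem gives $\IS \vdash \PR_{\tau_1}(x) \leftrightarrow \PR_{\tau_0}(\gdl{\pi} \dot{\to} x)$, and the two consistency statements coincide once the relevant implications are available), the guarded provability predicates $\PR_{\tau_i}(\gdl{\Con_{\tau_i} \to \varphi})$ collapse to the same formula over $\IS$. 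Condition 2 is handled by verifying $\Sigma_1$-soundness of $T_1$ directly from the choice of $\pi$, and then noting that the deduction-theorem equivalence of the consistency statements forces $\Th(T_0 + \Con_{\tau_0}) = \Th(T_1 + \Con_{\tau_1})$, since the two deductively closed theories have provably equivalent additional axioms over the common base.

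For condition 4, I would argue by contradiction through Corollary \ref{QPL_characterization}: if $\QPL_{\tau_0}(T_0) \subseteq \QPL_{\tau_1}(T_1)$ held, the corollary would force $T_1 \vdash \neg \Con_{\tau_0} \land \neg \Con_{\tau_1}$, contradicting that $T_1 + \Con_{\tau_1}$ (equivalently $T_1$ together with the consistency of its own defining formula) is consistent, which follows from the $\Sigma_1$-soundness secured in condition 2. The main obstacle, and the step deserving the most care, is the \emph{simultaneous} satisfaction of all four conditions by a single choice of $\pi$: I need $\pi$ weak enough that $\Con_{\tau_0} \leftrightarrow \Con_{\tau_1}$ and the guarded-predicate equivalence of condition 3 both hold over $\IS$ (which pushes $\pi$ toward being ``invisible'' to the consistency statements), yet strong enough that $T_0 + \Con_{\tau_0} \nvdash \pi$ so that the inclusion fails. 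Balancing these requires the condition $T_0 \vdash \Con_{\tau_0} \to \neg \PR_{\tau_0}(\gdl{\pi})$ to do double duty—it guarantees $\pi$ is not a theorem of $T_0 + \Con_{\tau_0}$ while remaining compatible with the deduction-theorem collapse of the consistency statements—and verifying that such a $\pi$ genuinely exists (via a self-referential $\Pi_1$ fixed point modeled on a Rosser or Gödel–Rosser sentence) is where the real work lies.
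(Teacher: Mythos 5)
Your overall scaffolding --- take $T_1 := T_0 + \pi$ for a Rosser-style $\Pi_1$ sentence $\pi$, set $\tau_1 := (\tau_0+\pi)(v)$, and extract condition 4 from Corollary \ref{QPL_characterization} --- is indeed the paper's construction (the paper takes $\pi$ to be a $\Pi_1$ Rosser sentence $\rho$ of $T_0$). But your self-declared ``crucial point'', the requirement $T_0 + \Con_{\tau_0} \nvdash \pi$, is a genuine error: it is flatly incompatible with condition 2, which you also claim to establish. Since $T_1 + \Con_{\tau_1} \vdash \pi$ trivially, $\Th(T_0+\Con_{\tau_0}) = \Th(T_1+\Con_{\tau_1})$ forces $T_0+\Con_{\tau_0}\vdash \pi$; so no $\pi$ satisfying your full list of requirements can exist, and the ``balancing'' you describe as the real work is an attempt to satisfy contradictory constraints. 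In the paper's construction the opposite happens: $\IS \vdash \Con_{\tau_0}\to\rho$ (formalized Rosser's theorem), together with $\IS \vdash \Con_{\tau_0} \leftrightarrow \neg\PR_{\tau_0}(\gdl{\neg\rho})$, is exactly what makes conditions 1--3 go through, since then $T_0+\Con_{\tau_0}$ and $T_1+\Con_{\tau_1}$ are deductively the same theory. Relatedly, your claim that $T_0\vdash\Con_{\tau_0}\to\neg\PR_{\tau_0}(\gdl{\pi})$ ``guarantees $\pi$ is not a theorem of $T_0+\Con_{\tau_0}$'' is false: it only rules out $T_0 \vdash \pi$ (given consistency of $T_0+\Con_{\tau_0}$), and the Rosser sentence itself is the counterexample, being provable in $T_0+\Con_{\tau_0}$ while unprovable in $T_0$.

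The second error is your first route to condition 4: there is no ``contrapositive of Corollary \ref{VDCor1}.1 applied in the reverse direction''. From $T_1+\Con_{\tau_1}\vdash\pi$ and $T_0+\Con_{\tau_0}\nvdash\pi$ you would get $\Th(T_1+\Con_{\tau_1})\nsubseteq\Th(T_0+\Con_{\tau_0})$, which by Corollary \ref{VDCor1}.1 refutes $\QPL_{\tau_1}(T_1)\subseteq\QPL_{\tau_0}(T_0)$ --- the wrong inclusion. The whole point of Proposition \ref{Ex3} is that $\QPL_{\tau_0}(T_0)\subseteq\QPL_{\tau_1}(T_1)$ fails even though the guarded data of conditions 1--3 coincide; the obstruction lives in the \emph{unguarded} provability predicates. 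Your third paragraph does contain the correct argument: the hypotheses of Corollary \ref{QPL_characterization} hold with $\pi=\rho$ (namely $T_0\vdash\Con_{\tau_0}\to\neg\PR_{\tau_0}(\gdl{\rho})$ by formalized Rosser, and $T_1\vdash\PR_{\tau_1}(\gdl{\rho})$ since $\rho$ is an axiom of $T_1$), so the inclusion would yield $T_1\vdash\neg\Con_{\tau_0}\land\neg\Con_{\tau_1}$, contradicting the $\Sigma_1$-soundness of $T_1$ (as $\neg\Con_{\tau_0}$ is a false $\Sigma_1$ sentence). Had you run only that argument and dropped the demand $T_0+\Con_{\tau_0}\nvdash\pi$, your proposal would essentially be the paper's proof; as written, it asks for an impossible witness.
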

\begin{proof}
Let $T_0$ be any $\Sigma_1$-sound recursively enumerable extension of $\IS$ and $\tau_0(v)$ be any $\Sigma_1$ definition of $T_0$. 
Let $\rho$ be a $\Pi_1$ Rosser sentence of $T_0$ defined by using $\tau_0(v)$. 
Also let $T_1 : = T_0 + \rho$ and $\tau_1(v)$ be $(\tau_0 + \rho)(v)$. 

1. Since $\IS \vdash \Con_{\tau_0} \leftrightarrow \neg \PR_{\tau_0}(\gdl{\neg \rho})$, $\IS \vdash \Con_{\tau_0} \leftrightarrow \Con_{\tau_1}$. 

2. Let $\psi$ be any $\Sigma_1$ sentence with $T_1 \vdash \psi$. 
Then $T_0 \vdash \neg \rho \lor \psi$. 
Since $T_0$ is $\Sigma_1$-sound, $\N \models \neg \rho \lor \psi$. 
Since $\N \models \rho$, $\N \models \psi$. 
Hence $T_1$ is $\Sigma_1$-sound. 

Moreover, since $\IS \vdash \Con_{\tau_0} \to \rho$, $T_0 + \Con_{\tau_0}$ is deductively equivalent to $T_0 + \rho + \Con_{\tau_0}$, and to $T_1 + \Con_{\tau_1}$. 

3. For any $\mathcal{L}_A$-formula $\varphi(\vec{x})$, 
\begin{align*}
	\IS \vdash \PR_{\tau_0}(\gdl{\Con_{\tau_0} \to \varphi(\vec{\dot{x}})}) & \leftrightarrow \PR_{\tau_0 + \Con_{\tau_0}}(\gdl{\varphi(\vec{\dot{x}})}), \\
& \leftrightarrow \PR_{\tau_0 + \rho + \Con_{\tau_0 + \rho}}(\gdl{\varphi(\vec{\dot{x}})}), \\
& \leftrightarrow \PR_{\tau_1}(\gdl{\Con_{\tau_1} \to \varphi(\vec{\dot{x}})}). 
\end{align*}

4. Since $T_1$ is $\Sigma_1$-sound and $T_0$ is consistent, $T_1 \nvdash \neg \Con_{\tau_0}$. 
%Since $T_1 \vdash \rho$, $T_1 \vdash \PR_{\tau_1}(\gdl{\rho})$. 
%Also $T_0 \vdash \Con_{\tau_0} \to \neg \PR_{\tau_0}(\gdl{\rho})$. 
%Suppose, towards a contradiction, $T_1 \vdash \neg \Con_{\tau_0}$. 
%Then $T_0 + \rho \vdash \neg \Con_{\tau_0}$. 
%Since $T_0 + \neg \rho \vdash \neg \Con_{\tau_0}$, we get $T_0 \vdash \neg \Con_{\tau_0}$. 
%This contradicts the $\Sigma_1$-soundness of $T_0$. 
%Thus $T_1 \nvdash \neg \Con_{\tau_0}$. 
It follows from Corollary \ref{QPL_characterization} that $\QPL_{\tau_0}(T_0) \nsubseteq \QPL_{\tau_1}(T_1)$. 
\end{proof}

\section{$\Sigma_1$ arithmetical interpretations}

In this section, we investigate inclusions between quantified provability logics with respect to $\Sigma_1$ arithmetical interpretations. 
The main goal of this section is to give a necessary and sufficient condition for the inclusion relation between quantified provability logics with respect to $\Sigma_1$ arithmetical interpretations.

\begin{defn}
An arithmetical interpretation $f$ is $\Sigma_n$ if for any atomic formula $P(\vec{x})$ of quantified modal logic, $f(P(\vec{x}))$ is a $\Sigma_n$ formula. 
\end{defn}

Notice that there are natural $\Sigma_1$ arithmetical interpretations. 
We introduce the quantified provability logics with respect to $\Sigma_n$ arithmetical interpretations. 

\begin{defn}
$\QPL_\tau^{\Sigma_n}(T) : = \{\varphi \mid \varphi$\ is a sentence and for all $\Sigma_n$ arithmetical interpretations $f$, $T \vdash f_\tau(\varphi)\}$. 
\end{defn}

Berarducci \cite{Ber89} proved that restricting arithmetical interpretations to $\Sigma_n$ does not change the complexity of quantified provability logics, that is, for each $n \geq 1$, the complexity of the quantified provability logic of $\PA$ with respect to $\Sigma_n$ arithmetical interpretations is also $\Pi^0_2$-complete. 

On the other hand, it is beneficial to deal with $\Sigma_1$ arithmetical interpretations in our study. 
In the proof of Artemov's Lemma, the assumption $\Con_\tau \land f_\tau(\D)$ is prepared to make the formulas $f(P_K(x))$ and $\neg f(P_K(x, y))$ equivalent to $\Sigma_1$ formulas for each $K \in \{Z, S, A, M, L, E\}$. 
In the case that $f$ is a $\Sigma_1$ arithmetical interpretation, the same result holds without the assumption $\Con_\tau \land f_\tau(\D)$ by adding sufficiently many theorems of $\IS$ to the sentence $\chi$ as conjuncts. 
This is guaranteed by the following equivalences: 
\begin{itemize}
	\item $\neg P_Z(x) \leftrightarrow \exists y P_S(y, x)$; 
	\item $\neg P_S(x, y) \leftrightarrow \exists z (P_S(x, z) \land (P_L(z, y) \lor P_L(y, z)))$; 
	\item $\neg P_A(x, y, z) \leftrightarrow \exists w (P_A(x, y, w) \land (P_L(w, z) \lor P_L(z, w)))$; 
	\item $\neg P_M(x, y, z) \leftrightarrow \exists w (P_M(x, y, w) \land (P_L(w, z) \lor P_L(z, w)))$; 
	\item $\neg P_L(x, y) \leftrightarrow P_E(x, y) \lor P_L(y, x)$; 
	\item $\neg P_E(x, y) \leftrightarrow P_L(x, y) \lor P_L(y, x)$.  
\end{itemize}
Thus we obtain the following variation of Artemov's Lemma with respect to $\Sigma_1$ arithmetical interpretations. 

\begin{thm}[$\Sigma_1$-Artemov's Lemma]
There exists an $\mathcal{L}_A$-sentence $\chi$ such that $\IS \vdash \chi$ and for any $\Sigma_1$ arithmetical interpretation $f$ and any $\mathcal{L}_A$-formula $\varphi(\vec{x})$, 
\[
	\IS \vdash f_\tau(\chi^\circ) \land R_f(\vec{x}, \vec{y}) \to (\varphi(\vec{x}) \leftrightarrow f_\tau(\varphi^\circ(\vec{y}))). 
\]
\qed
\end{thm}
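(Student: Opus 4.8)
The plan is to mirror the model-theoretic proof of the original Artemov's Lemma (Fact \ref{AL}), replacing the role played by the assumption $\Con_\tau \land f_\tau(\D)$ with the observation that, for a $\Sigma_1$ arithmetical interpretation, the complementary relations $\neg f_\tau(P_K(\vec{x}))$ are \emph{already} provably equivalent to $\Sigma_1$ formulas, provided we absorb the listed equivalences into $\chi$. Concretely, I would define $\chi$ to be the conjunction of the basic arithmetical sentences used in Fact \ref{AL} together with the six $\mathcal{L}_A$-sentences obtained by translating the displayed equivalences back through the $(\cdot)^\circ$-correspondence (e.g.\ the sentence $\forall x(\neg(x=0) \leftrightarrow \exists y\, S(y)=x)$, and similarly for the other five cases). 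Since each such sentence is a theorem of $\IS$, we retain $\IS \vdash \chi$. The point is that under $f_\tau(\chi^\circ)$, the formula $f_\tau(\neg P_K(\vec{x}))^{\phantom{\circ}}$ — that is, $f_\tau((\neg P_K)(\vec{x}))$ after moving the negation inside via the equivalence — becomes provably equivalent in $\IS$ to an existentially quantified combination of the formulas $f_\tau(P_{K'}(\cdots))$, each of which is $\Sigma_1$ by the $\Sigma_1$-ness of $f$. Hence both $f_\tau(P_K(\vec{x}))$ and its negation are equivalent over $\IS + f_\tau(\chi^\circ)$ to $\Sigma_1$ formulas, which is exactly the property that the hypothesis $\Con_\tau \land f_\tau(\D)$ was securing in the original argument.

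With that property in hand, the structural core of the proof carries over verbatim. Given a model $M \models \IS + f_\tau(\chi^\circ)$, the relation $f_\tau(P_E(x,y))$ defines an equivalence $\sim$ and the relations $P_K$ induce an $\mathcal{L}_A$-structure $M_f$ on the quotient, with $\chi$ guaranteeing $M_f$ is well-defined and satisfies a sufficiently strong fragment of $\IS$, and with $M_f \models \varphi(\vec{[a]}) \iff M \models f_\tau(\varphi^\circ(\vec{a}))$. The embedding of $M$ onto an initial segment of $M_f$ defined by $R_f(x,y)$ is exactly Fact \ref{BooSigma_1} (which, note, already holds without any $\Con_\tau \land f_\tau(\D)$ hypothesis). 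The remaining step is to upgrade this initial embedding to an isomorphism via the Tennenbaum-style argument: this is precisely where one needs that the defining relations are $\Delta_1$, and here that is supplied by our new $\chi$ rather than by $\Con_\tau \land f_\tau(\D)$. Once $M \cong M_f$, they are elementarily equivalent, and so from $M \models R_f(\vec{a},\vec{b})$ we deduce $M \models \varphi(\vec{a}) \leftrightarrow f_\tau(\varphi^\circ(\vec{b}))$, which is the desired conclusion internalized into $\IS$.

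The main obstacle I anticipate is verifying that the Tennenbaum surjectivity/injectivity argument genuinely goes through using only the $\Sigma_1$-definability of the $P_K$ relations secured by $\chi$, \emph{in the absence of} $\Con_\tau$. In the original proof, $\Con_\tau \land f_\tau(\D)$ was used to prove the equivalences $f_\tau(P_K(\vec{x})) \leftrightarrow \PR_\tau(\gdl{f_\tau(P_K(\vec{\dot{x}}))})$, i.e.\ it made the interpreted relations into \emph{provability} statements, which are $\Sigma_1$. For a $\Sigma_1$ interpretation this detour is unnecessary because $f_\tau(P_K(\vec{x}))$ is $\Sigma_1$ outright and $f_\tau(\neg P_K(\vec{x}))$ is made $\Sigma_1$ by $\chi$; so I would check carefully that the $\Sigma_1$ representability of \emph{both} a relation and its complement — which is what the Tennenbaum argument actually consumes — is all that was ever needed, and that nothing in the isomorphism construction secretly relied on consistency of $\tau$. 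I expect this verification to be routine once the six equivalences are correctly formalized as conjuncts of $\chi$, but it is the step where an error would most plausibly hide.
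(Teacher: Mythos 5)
Your proposal is correct and takes essentially the same approach as the paper: the paper proves this lemma precisely by noting that $\Con_\tau \land f_\tau(\D)$ was only ever used to make $f_\tau(P_K(\vec{x}))$ and $\neg f_\tau(P_K(\vec{x}))$ equivalent to $\Sigma_1$ formulas, and that for a $\Sigma_1$ interpretation this is instead secured by adding the $\IS$-theorems corresponding to the same six equivalences you list as conjuncts of $\chi$, after which the Tennenbaum-style isomorphism argument runs unchanged. The verification you flag as the main obstacle is exactly what the paper asserts (and leaves as routine), including the companion observation that the surjectivity fact holds without $\Con_\tau \land f_\tau(\D)$, stated there as Proposition \ref{BooSurj2}.
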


We also obtain a variation of Fact \ref{BooSurj} with respect to $\Sigma_1$ arithmetical interpretations. 

\begin{prop}\label{BooSurj2}
For any $\Sigma_1$ arithmetical interpretation $f$,
\[
	\IS \vdash f_\tau(\chi^\circ) \to \forall y \exists x R_f(x, y). 
\]
\qed
\end{prop}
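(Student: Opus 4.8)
The plan is to prove Proposition~\ref{BooSurj2} by imitating the original proof of Fact~\ref{BooSurj}, but observing that the hypotheses $\Con_\tau \land f_\tau(\D)$ are no longer needed once $f$ is a $\Sigma_1$ arithmetical interpretation. Recall from the outline following Fact~\ref{AL} that $\forall y \exists x\, R_f(x,y)$ expresses that the embedding $R_f$ of $M$ into the interpreted structure $M_f$ is in fact surjective, and that in the general case the surjectivity (and hence the isomorphism) rests on a Tennenbaum-style argument which requires $f_\tau(P_K(\vec x))$ and its negation to be equivalent to $\Sigma_1$ formulas. The point of $\Sigma_1$-Artemov's Lemma is precisely that these $\Sigma_1$ equivalences are now built into $\chi$ via the six displayed equivalences listed above, so that the assumption $\Con_\tau \land f_\tau(\D)$ becomes superfluous.

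Concretely, I would first note that since $f$ is $\Sigma_1$, each $f_\tau(P_K(\vec x))$ is already a $\Sigma_1$ formula, and that the conjunct $f_\tau(\chi^\circ)$ forces $\neg f_\tau(P_K(\vec x))$ to be provably equivalent in $\IS$ to the $f_\tau$-image of the right-hand side of the corresponding displayed equivalence, which is again $\Sigma_1$ because those right-hand sides are positive existential combinations of the $P_K$. Thus, working inside $\IS + f_\tau(\chi^\circ)$, both $f_\tau(P_K(\vec x))$ and $\neg f_\tau(P_K(\vec x))$ are $\Sigma_1$, exactly the situation that drives the Tennenbaum argument in the proof of Fact~\ref{BooSurj}. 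I would then reproduce that argument verbatim, reading off from it that $R_f$ defines a surjection; formally, one argues that any element $y$ lies in the range of $R_f$, i.e.\ $\exists x\, R_f(x,y)$, by the same inductive construction of the chain witnessing $R_f$ that appears in Boolos's Lemma~17.8, now carried out without appeal to $\Con_\tau$ or $f_\tau(\D)$.

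The main obstacle, and the only place deserving care, is to verify that the displayed $\Sigma_1$ equivalences are genuinely provable in $\IS$ from $f_\tau(\chi^\circ)$ and that they suffice to replace the role formerly played by $\Con_\tau \land f_\tau(\D)$. In the original proof, $\Con_\tau \land f_\tau(\D)$ yielded the equivalences $f_\tau(P_K(\vec x)) \leftrightarrow \PR_\tau(\gdl{f_\tau(P_K(\vec{\dot x}))})$ and its negated counterpart, which were used solely to certify that $\neg f_\tau(P_K(\vec x))$ is $\Sigma_1$; here the definability equivalences achieve the same certification directly and unconditionally, so no soundness or consistency hypothesis is required. I would therefore emphasise that the choice of $\chi$ is the one furnished by $\Sigma_1$-Artemov's Lemma (augmented so that $\IS$ proves the six equivalences interpreted under $f$), and that with this $\chi$ the surjectivity of $R_f$ follows exactly as in Fact~\ref{BooSurj}. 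Since the remainder is a routine transcription of the known argument, I would keep the proof brief and refer the reader to the proof of Fact~\ref{BooSurj} together with the preceding discussion of $\Sigma_1$-Artemov's Lemma.
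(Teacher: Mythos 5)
Your proposal is correct and is essentially the paper's own (implicit) argument: the paper states Proposition \ref{BooSurj2} without proof, relying on exactly your observation that when $f$ is $\Sigma_1$, the six definability equivalences incorporated into $\chi$ make both $f_\tau(P_K(\vec{x}))$ and $\neg f_\tau(P_K(\vec{x}))$ provably equivalent to $\Sigma_1$ formulas under the hypothesis $f_\tau(\chi^\circ)$, so that the surjectivity argument of Fact \ref{BooSurj} (Boolos's Lemma 17.8) goes through with $\Con_\tau \land f_\tau(\D)$ deleted. Only your final parenthetical is loosely worded: $\IS$ does not prove the interpreted equivalences outright; they hold only under the hypothesis $f_\tau(\chi^\circ)$, as your second paragraph correctly states.
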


The following proposition is a variation of Fact \ref{VD} with respect to $\Sigma_1$ arithmetical interpretations. 

\begin{prop}\label{VD2}
For any $\mathcal{L}_A$-sentence $\varphi$, the following are equivalent: 
\begin{enumerate}
	\item $T \vdash \varphi$. 
	\item $\chi^\circ \to \varphi^\circ \in \QPL_\tau^{\Sigma_1}(T)$. 
\end{enumerate}
\end{prop}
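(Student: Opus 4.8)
The plan is to mirror the proof of Fact \ref{VD}, but using the $\Sigma_1$-version of Artemov's Lemma in place of the full Artemov's Lemma, so that the hypothesis $\Con_\tau \land f_\tau(\D)$ is no longer needed. This is precisely why the modal antecedent simplifies from $\Diamond \top \land \D \land \chi^\circ$ in Fact \ref{VD} to just $\chi^\circ$ here, and why the conclusion strengthens from $T + \Con_\tau \vdash \varphi$ to $T \vdash \varphi$.

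For the direction $(1 \Rightarrow 2)$, I would suppose $T \vdash \varphi$ and let $f$ be an arbitrary $\Sigma_1$ arithmetical interpretation. By $\Sigma_1$-Artemov's Lemma, $\IS \vdash f_\tau(\chi^\circ) \land R_f(\vec{x}, \vec{y}) \to (\varphi(\vec{x}) \leftrightarrow f_\tau(\varphi^\circ(\vec{y})))$; since $\varphi$ is a sentence there are no free variables to track, so this reads $\IS \vdash f_\tau(\chi^\circ) \to (\varphi \leftrightarrow f_\tau(\varphi^\circ))$ once one observes (via Proposition \ref{BooSurj2}, which supplies a witness $x$ with $R_f(x,y)$ for the relevant constant term) that the $R_f$ antecedent can be discharged. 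From $T \vdash \varphi$ together with $T \vdash f_\tau(\chi^\circ) \to (\varphi \leftrightarrow f_\tau(\varphi^\circ))$, one concludes $T \vdash f_\tau(\chi^\circ) \to f_\tau(\varphi^\circ)$, i.e. $T \vdash f_\tau(\chi^\circ \to \varphi^\circ)$. Since $f$ ranges over all $\Sigma_1$ interpretations, this gives $\chi^\circ \to \varphi^\circ \in \QPL_\tau^{\Sigma_1}(T)$.

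For $(2 \Rightarrow 1)$, I would take a natural arithmetical interpretation $f$, which is $\Sigma_1$ (as noted after the definition of $\Sigma_n$ interpretations). From $\chi^\circ \to \varphi^\circ \in \QPL_\tau^{\Sigma_1}(T)$ we get $T \vdash f_\tau(\chi^\circ \to \varphi^\circ)$, and by Proposition \ref{Natural} we have $\IS \vdash f_\tau(\chi^\circ) \leftrightarrow \chi$ and $\IS \vdash f_\tau(\varphi^\circ) \leftrightarrow \varphi$. Since $\IS \vdash \chi$, it follows that $T \vdash \varphi$.

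The main point requiring care is whether the natural interpretation is genuinely admissible here: in the $(2 \Rightarrow 1)$ step I rely on the fact that a natural arithmetical interpretation is $\Sigma_1$, which is legitimate only because each $f(P_K(\vec{x}))$ is the intended $\Delta_0$ (hence $\Sigma_1$) atomic formula. Since $\QPL_\tau^{\Sigma_1}(T)$ quantifies only over $\Sigma_1$ interpretations, it is essential that the natural interpretation lie in this restricted class, and this is exactly what makes the argument work. The only subtlety in $(1 \Rightarrow 2)$ is the clean discharge of the $R_f(\vec{x},\vec{y})$ antecedent for a sentence, which Proposition \ref{BooSurj2} handles; I do not expect any genuine obstacle beyond bookkeeping.
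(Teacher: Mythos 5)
Your proof is correct and follows essentially the same route as the paper: $\Sigma_1$-Artemov's Lemma for $(1 \Rightarrow 2)$ and a natural $\Sigma_1$ arithmetical interpretation together with Proposition \ref{Natural} for $(2 \Rightarrow 1)$. The only cosmetic difference is your appeal to Proposition \ref{BooSurj2} to discharge the $R_f$ antecedent, which is unnecessary here since $\varphi$ is a sentence and the vector of free variables is empty, so the $R_f$ conjunction is vacuous and the lemma already yields $\IS \vdash f_\tau(\chi^\circ) \to (\varphi \leftrightarrow f_\tau(\varphi^\circ))$ directly.
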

\begin{proof}
$(1 \Rightarrow 2)$: Suppose $T \vdash \varphi$. 
By $\Sigma_1$-Artemov's Lemma, for any $\Sigma_1$ arithmetical interpretation $f$, $\IS \vdash f_\tau(\chi^\circ) \to (\varphi \leftrightarrow f_\tau(\varphi^\circ))$. 
Then $T$ proves $f_\tau(\chi^\circ \to \varphi^\circ)$.
Thus $\chi^\circ \to \varphi^\circ \in \QPL_\tau^{\Sigma_1}(T)$. 

$(2 \Rightarrow 1)$: Suppose $\chi^\circ \to \varphi^\circ \in \QPL_\tau^{\Sigma_1}(T)$. 
By considering a natural $\Sigma_1$ arithmetical interpretation, we obtain $T \vdash \varphi$. 
\end{proof}

We prove the following main theorem of this section. 

\begin{thm}\label{MT}
The following are equivalent: 
\begin{enumerate}
	\item $\QPL_{\tau_0}^{\Sigma_1}(T_0) \subseteq \QPL_{\tau_1}^{\Sigma_1}(T_1)$. 
	\item $\Th(T_0) \subseteq \Th(T_1)$ and for any $\mathcal{L}_A$-formula $\varphi(\vec{x})$, 
\[
	T_1 \vdash \forall \vec{x} (\PR_{\tau_0}(\gdl{\varphi(\vec{\dot{x}})}) \leftrightarrow 		\PR_{\tau_1}(\gdl{\varphi(\vec{\dot{x}})})).
\] 
\end{enumerate}
\end{thm}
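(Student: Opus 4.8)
The plan is to prove both implications separately, with $(2 \Rightarrow 1)$ being routine and $(1 \Rightarrow 2)$ carrying the weight of the argument.

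For $(2 \Rightarrow 1)$, I would argue exactly as in the $(\Leftarrow)$ part of Corollary \ref{QPL_characterization}. Given $A \in \QPL_{\tau_0}^{\Sigma_1}(T_0)$ and any $\Sigma_1$ arithmetical interpretation $f$, we have $T_0 \vdash f_{\tau_0}(A)$, whence $T_1 \vdash f_{\tau_0}(A)$ by $\Th(T_0) \subseteq \Th(T_1)$. The second assumption in $(2)$ is precisely the hypothesis of Lemma \ref{eqQPL} (with $\sigma = \tau_0$, over $T_1$), so $T_1 \vdash f_{\tau_0}(A) \leftrightarrow f_{\tau_1}(A)$ and hence $T_1 \vdash f_{\tau_1}(A)$. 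As $f$ was an arbitrary $\Sigma_1$ interpretation, $A \in \QPL_{\tau_1}^{\Sigma_1}(T_1)$.

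For $(1 \Rightarrow 2)$, the inclusion $\Th(T_0) \subseteq \Th(T_1)$ follows immediately from Proposition \ref{VD2}: if $T_0 \vdash \varphi$ then $\chi^\circ \to \varphi^\circ \in \QPL_{\tau_0}^{\Sigma_1}(T_0) \subseteq \QPL_{\tau_1}^{\Sigma_1}(T_1)$, and Proposition \ref{VD2} applied to $\tau_1$ gives $T_1 \vdash \varphi$. The substantial part is the provable equivalence of the two provability predicates, and here I would reproduce the argument of Theorem \ref{IPPL1} but driven by $\Sigma_1$-Artemov's Lemma rather than the full Artemov's Lemma. Fix an $\mathcal{L}_A$-formula $\varphi(\vec{x})$ and an arbitrary $\Sigma_1$ arithmetical interpretation $f$. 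Applying $\Sigma_1$-Artemov's Lemma to $\varphi(\vec{x})$ and boxing via the derivability conditions (Fact \ref{DC}) yields, inside $\IS$, that $f_{\tau_0}(\Box\chi^\circ) \land \PR_{\tau_0}(\gdl{R_f(\vec{\dot{x}}, \vec{\dot{y}})})$ implies $\PR_{\tau_0}(\gdl{\varphi(\vec{\dot{x}})}) \leftrightarrow f_{\tau_0}(\Box \varphi^\circ(\vec{y}))$; applying $\Sigma_1$-Artemov's Lemma a second time to the $\mathcal{L}_A$-formula $\PR_{\tau_0}(\gdl{\varphi(\vec{\dot{x}})})$ converts the $\vec{x}$-occurrence into $f_{\tau_0}(\PR_{\tau_0}(\gdl{\varphi(\vec{\dot{y}})})^\circ)$. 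Crucially, since $f$ is $\Sigma_1$, Lemma \ref{SelfProver1}.2 lets me replace $\PR_{\tau_0}(\gdl{R_f(\vec{\dot{x}}, \vec{\dot{y}})})$ by $R_f(\vec{x}, \vec{y})$ already over $\IS$ --- this is exactly the point where restricting to $\Sigma_1$ interpretations removes both the need for $\Con_{\tau_0}$ and the hypothesis $\Th(\PA) \subseteq \Th(T_0)$ that Theorem \ref{IPPL1} required.

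Combining these, over $\IS$ the hypothesis $f_{\tau_0}(\boxdot \chi^\circ) \land R_f(\vec{x}, \vec{y})$ forces $f_{\tau_0}(\PR_{\tau_0}(\gdl{\varphi(\vec{\dot{y}})})^\circ) \leftrightarrow f_{\tau_0}(\Box \varphi^\circ(\vec{y}))$. Since $\vec{x}$ is absent from the conclusion, I existentially quantify it away and discharge $\exists \vec{x}\, R_f(\vec{x}, \vec{y})$ by Proposition \ref{BooSurj2}, obtaining $\IS \vdash f_{\tau_0}(B_\varphi)$ for the modal sentence $B_\varphi := \boxdot\chi^\circ \to \forall \vec{y}(\PR_{\tau_0}(\gdl{\varphi(\vec{\dot{y}})})^\circ \leftrightarrow \Box \varphi^\circ(\vec{y}))$. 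As $f$ ranged over all $\Sigma_1$ interpretations, $B_\varphi \in \QPL_{\tau_0}^{\Sigma_1}(T_0) \subseteq \QPL_{\tau_1}^{\Sigma_1}(T_1)$. Finally I evaluate $B_\varphi$ under a natural $\Sigma_1$ arithmetical interpretation $g$: by Proposition \ref{Natural} the antecedent $g_{\tau_1}(\boxdot \chi^\circ)$ is $\IS$-provable, $g_{\tau_1}(\PR_{\tau_0}(\gdl{\varphi(\vec{\dot{y}})})^\circ)$ collapses to $\PR_{\tau_0}(\gdl{\varphi(\vec{\dot{y}})})$, and $g_{\tau_1}(\Box \varphi^\circ(\vec{y}))$ collapses to $\PR_{\tau_1}(\gdl{\varphi(\vec{\dot{y}})})$, yielding the desired $T_1 \vdash \forall \vec{y}(\PR_{\tau_0}(\gdl{\varphi(\vec{\dot{y}})}) \leftrightarrow \PR_{\tau_1}(\gdl{\varphi(\vec{\dot{y}})}))$. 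The main obstacle is the bookkeeping in the double application of $\Sigma_1$-Artemov's Lemma together with the self-proving property of $R_f$; once these are aligned, the collapse under a natural interpretation is routine.
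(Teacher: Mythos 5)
Your proposal is correct and follows essentially the same route as the paper's own proof: Proposition \ref{VD2} for $\Th(T_0) \subseteq \Th(T_1)$, a double application of $\Sigma_1$-Artemov's Lemma combined with Lemma \ref{SelfProver1}.2 and Proposition \ref{BooSurj2} for the provable equivalence of the provability predicates, and Lemma \ref{eqQPL} for the converse direction. The only (immaterial) differences are cosmetic, such as placing $\boxdot\chi^\circ$ outside the universal quantifier in $B_\varphi$, which is harmless since $\chi^\circ$ is a sentence.
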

\begin{proof}
$(1 \Rightarrow 2)$: Suppose $\QPL_{\tau_0}^{\Sigma_1}(T_0) \subseteq \QPL_{\tau_1}^{\Sigma_1}(T_1)$. 

First, we prove $\Th(T_0) \subseteq \Th(T_1)$. 
Let $\varphi$ be any sentence with $T_0 \vdash \varphi$. 
Then by Proposition \ref{VD2}, $\chi^\circ \to \varphi^\circ \in \QPL_{\tau_0}^{\Sigma_1}(T_0)$. 
By the supposition, this sentence is also in $\QPL_{\tau_1}^{\Sigma_1}(T_1)$. 
Then by Proposition \ref{VD2} again, we obtain $T_1 \vdash \varphi$. 
Therefore $\Th(T_0) \subseteq \Th(T_1)$. 

Secondly, we prove the $T_1$-provable equivalence of the two provability predicates. 
Let $\varphi(\vec{y})$ be any $\mathcal{L}_A$-formula. 
By $\Sigma_1$-Artemov's Lemma, for any $\Sigma_1$ arithmetical interpretation $f$, 
\[
	\IS \vdash f_{\tau_0}(\chi^\circ) \land R_f(\vec{x}, \vec{y}) \to (\varphi(\vec{x}) \leftrightarrow f_{\tau_0}(\varphi^\circ(\vec{y}))). 
\]
By Fact \ref{DC}, 
\begin{equation}\label{eq7}
	\IS \vdash f_{\tau_0}(\Box \chi^\circ) \land \PR_{\tau_0}(\gdl{R_f(\vec{\dot{x}}, \vec{\dot{y}})}) \to \left(\PR_{\tau_0}(\gdl{\varphi(\vec{\dot{x}})}) \leftrightarrow f_{\tau_0}(\Box \varphi^\circ(\vec{y}))\right). 
\end{equation}
By $\Sigma_1$-Artemov's Lemma again, 
\begin{equation}\label{eq8}
	\IS \vdash f_{\tau_0}(\chi^\circ) \land R_f(\vec{x}, \vec{y}) \to \left(f_{\tau_0}(\PR_{\tau_0}(\gdl{\varphi(\vec{\dot{y}})})^\circ) \leftrightarrow \PR_{\tau_0}(\gdl{\varphi(\vec{\dot{x}})})\right). 
\end{equation}

By Lemma \ref{SelfProver1}.2, $\IS \vdash R_f(\vec{x}, \vec{y}) \to \PR_{\tau_0}(\gdl{R_f(\vec{\dot{x}}, \vec{\dot{y}})})$. 
By combining this with (\ref{eq7}) and (\ref{eq8}), 
\[
	\IS \vdash f_{\tau_0}(\boxdot \chi^\circ) \land R_f(\vec{x}, \vec{y}) \to \left(f_{\tau_0}(\PR_{\tau_0}(\gdl{\varphi(\vec{\dot{y}})})^\circ) \leftrightarrow f_{\tau_0}(\Box \varphi^\circ(\vec{y}))\right). 
\]
Since $\vec{x}$ does not appear in the consequent of the formula, 
\[
	\IS \vdash f_{\tau_0}(\boxdot \chi^\circ) \land \exists \vec{x} R_f(\vec{x}, \vec{y}) \to \left(f_{\tau_0}(\PR_{\tau_0}(\gdl{\varphi(\vec{\dot{y}})})^\circ) \leftrightarrow f_{\tau_0}(\Box \varphi^\circ(\vec{y}))\right). 
\]
By Proposition \ref{BooSurj2}, $\IS \vdash f_{\tau_0}(\chi^\circ) \to \forall \vec{y} \exists \vec{x} R_f(\vec{x}, \vec{y})$. 
Then, 
\[
	\IS \vdash f_{\tau_0}(\boxdot \chi^\circ) \to \left(f_{\tau_0}(\PR_{\tau_0}(\gdl{\varphi(\vec{\dot{y}})})^\circ) \leftrightarrow f_{\tau_0}(\Box \varphi^\circ(\vec{y}))\right). 
\]
We obtain
\[
	\forall \vec{y} \left(\boxdot \chi^\circ \to \left(\PR_{\tau_0}(\gdl{\varphi(\vec{\dot{y}})})^\circ \leftrightarrow \Box \varphi^\circ(\vec{y})\right) \right) \in \QPL_{\tau_0}^{\Sigma_1}(T_0) \subseteq \QPL_{\tau_1}^{\Sigma_1}(T_1). 
\]
By considering a natural $\Sigma_1$ arithmetical interpretation, we conclude
\[
	T_1 \vdash \forall \vec{y} \left(\PR_{\tau_0}(\gdl{\varphi(\vec{\dot{y}})}) \leftrightarrow \PR_{\tau_1}(\gdl{\varphi(\vec{\dot{y}})}) \right). 
\]

$(2 \Rightarrow 1)$: Assume Clause 2 of the statement. 
Let $A$ be any element of $\QPL_{\tau_0}^{\Sigma_1}(T_0)$ and $f$ be any $\Sigma_1$ arithmetical interpretation. 
Then, $T_0 \vdash f_{\tau_0}(A)$. 
Since $\Th(T_0) \subseteq \Th(T_1)$, $T_1 \vdash f_{\tau_0}(A)$. 
By the assumption and Lemma \ref{eqQPL}, we have $T_1 \vdash f_{\tau_0}(A) \leftrightarrow f_{\tau_1}(A)$, and thus $T_1 \vdash f_{\tau_1}(A)$. 
Therefore $A$ is in $\QPL_{\tau_1}^{\Sigma_1}(T_1)$.  
We have proved $\QPL_{\tau_0}^{\Sigma_1}(T_0) \subseteq \QPL_{\tau_1}^{\Sigma_1}(T_1)$. 
\end{proof}

Similar to the proof of $(2 \Rightarrow 1)$ of Theorem \ref{MT}, it can be proved that Clause 2 in the statement of Theorem \ref{MT} implies $\QPL_{\tau_0}(T_0) \subseteq \QPL_{\tau_1}(T_1)$.

\begin{cor}\label{CorS1}
If $\QPL_{\tau_0}^{\Sigma_1}(T_0) \subseteq \QPL_{\tau_1}^{\Sigma_1}(T_1)$, then $\QPL_{\tau_0}(T_0) \subseteq \QPL_{\tau_1}(T_1)$. 

\qed
\end{cor}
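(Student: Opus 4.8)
The plan is to chain Theorem \ref{MT} with the observation recorded immediately after it, so that no new machinery is needed. First I would apply the direction $(1 \Rightarrow 2)$ of Theorem \ref{MT} to the hypothesis $\QPL_{\tau_0}^{\Sigma_1}(T_0) \subseteq \QPL_{\tau_1}^{\Sigma_1}(T_1)$. This yields both $\Th(T_0) \subseteq \Th(T_1)$ and the $T_1$-provable equivalence $T_1 \vdash \forall \vec{x}(\PR_{\tau_0}(\gdl{\varphi(\vec{\dot{x}})}) \leftrightarrow \PR_{\tau_1}(\gdl{\varphi(\vec{\dot{x}})}))$ for every $\mathcal{L}_A$-formula $\varphi(\vec{x})$, that is, Clause 2 of Theorem \ref{MT}.

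Next I would argue that Clause 2 already forces the \emph{unrestricted} inclusion $\QPL_{\tau_0}(T_0) \subseteq \QPL_{\tau_1}(T_1)$, mimicking the proof of $(2 \Rightarrow 1)$ in Theorem \ref{MT} but dropping the $\Sigma_1$ restriction on interpretations. Concretely, take any $A \in \QPL_{\tau_0}(T_0)$ and any arithmetical interpretation $f$. Then $T_0 \vdash f_{\tau_0}(A)$, and since $\Th(T_0) \subseteq \Th(T_1)$ we obtain $T_1 \vdash f_{\tau_0}(A)$. Applying Lemma \ref{eqQPL} with the provable equivalence supplied by Clause 2 gives $T_1 \vdash f_{\tau_0}(A) \leftrightarrow f_{\tau_1}(A)$, whence $T_1 \vdash f_{\tau_1}(A)$; as $f$ was arbitrary, $A \in \QPL_{\tau_1}(T_1)$.

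The one point worth checking—and effectively the only obstacle—is that Lemma \ref{eqQPL} applies verbatim here even though we are no longer restricting to $\Sigma_1$ interpretations. Its hypothesis is exactly the provability-predicate equivalence of Clause 2, and its conclusion $T_1 \vdash f_{\sigma}(A) \leftrightarrow f_{\tau}(A)$ is stated for \emph{any} arithmetical interpretation $f$, with no complexity assumption. Thus the $\Sigma_1$-hypothesis enters solely through the $(1 \Rightarrow 2)$ direction of Theorem \ref{MT} to secure Clause 2, after which the passage to the full logics is purely formal, since Lemma \ref{eqQPL} is insensitive to the complexity of $f$.
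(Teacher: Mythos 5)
Your proposal is correct and follows essentially the same route as the paper: the paper obtains this corollary by applying Theorem \ref{MT} $(1 \Rightarrow 2)$ to get Clause 2, and then invoking the remark stated immediately before the corollary, namely that Clause 2 implies $\QPL_{\tau_0}(T_0) \subseteq \QPL_{\tau_1}(T_1)$ by repeating the $(2 \Rightarrow 1)$ argument with arbitrary arithmetical interpretations. Your explicit check that Lemma \ref{eqQPL} imposes no complexity restriction on $f$ is exactly the point that makes this passage from the $\Sigma_1$ setting to the unrestricted one purely formal.
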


We propose the following question. 

\begin{prob}
Does the converse implication of Corollary \ref{CorS1} hold?
\end{prob}

We close this section with the following corollary. 

\begin{cor}
If $\QPL_{\tau_0}^{\Sigma_1}(T_0) \subseteq \QPL_{\tau_1}^{\Sigma_1}(T_1)$ and $T_1$ is $\Sigma_1$-sound, then $\QPL_{\tau_0}^{\Sigma_1}(T_0) = \QPL_{\tau_1}^{\Sigma_1}(T_1)$. 
\end{cor}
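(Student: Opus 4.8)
The plan is to derive the reverse inclusion $\QPL_{\tau_1}^{\Sigma_1}(T_1) \subseteq \QPL_{\tau_0}^{\Sigma_1}(T_0)$ by applying Theorem \ref{MT} with the roles of $T_0$ and $T_1$ interchanged. By Theorem \ref{MT}, the hypothesis $\QPL_{\tau_0}^{\Sigma_1}(T_0) \subseteq \QPL_{\tau_1}^{\Sigma_1}(T_1)$ already yields $\Th(T_0) \subseteq \Th(T_1)$ together with the $T_1$-provable equivalence $T_1 \vdash \forall \vec{x}(\PR_{\tau_0}(\gdl{\varphi(\vec{\dot{x}})}) \leftrightarrow \PR_{\tau_1}(\gdl{\varphi(\vec{\dot{x}})}))$ for every $\mathcal{L}_A$-formula $\varphi(\vec{x})$. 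To invoke Theorem \ref{MT} in the other direction I need the two symmetric conditions: $\Th(T_1) \subseteq \Th(T_0)$, and $T_0 \vdash \forall \vec{x}(\PR_{\tau_1}(\gdl{\varphi(\vec{\dot{x}})}) \leftrightarrow \PR_{\tau_0}(\gdl{\varphi(\vec{\dot{x}})}))$ for every $\varphi(\vec{x})$.

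The crux is establishing $\Th(T_1) \subseteq \Th(T_0)$, and this is exactly where the $\Sigma_1$-soundness of $T_1$ enters. First I would take an arbitrary sentence $\psi$ with $T_1 \vdash \psi$. By Fact \ref{DC}.1 we have $T_1 \vdash \PR_{\tau_1}(\gdl{\psi})$, and hence by the equivalence above, instantiated at the sentence $\psi$, we get $T_1 \vdash \PR_{\tau_0}(\gdl{\psi})$. Now $\PR_{\tau_0}(\gdl{\psi})$ is a $\Sigma_1$ sentence, so the $\Sigma_1$-soundness of $T_1$ gives $\N \models \PR_{\tau_0}(\gdl{\psi})$; since $\tau_0(v)$ is a $\Sigma_1$ definition of $T_0$ and $\PR_{\tau_0}(x)$ is a standard provability predicate, this means $T_0 \vdash \psi$. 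Thus $\Th(T_1) \subseteq \Th(T_0)$, and combined with $\Th(T_0) \subseteq \Th(T_1)$ we obtain $\Th(T_0) = \Th(T_1)$.

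With $\Th(T_0) = \Th(T_1)$ in hand, the second condition is immediate: the sentence $\forall \vec{x}(\PR_{\tau_0}(\gdl{\varphi(\vec{\dot{x}})}) \leftrightarrow \PR_{\tau_1}(\gdl{\varphi(\vec{\dot{x}})}))$ lies in $\Th(T_1) = \Th(T_0)$, so $T_0$ proves it as well, which is precisely the required equivalence $T_0 \vdash \forall \vec{x}(\PR_{\tau_1}(\gdl{\varphi(\vec{\dot{x}})}) \leftrightarrow \PR_{\tau_0}(\gdl{\varphi(\vec{\dot{x}})}))$. Applying Theorem \ref{MT} with $T_0$ and $T_1$ swapped then yields $\QPL_{\tau_1}^{\Sigma_1}(T_1) \subseteq \QPL_{\tau_0}^{\Sigma_1}(T_0)$, and together with the hypothesis this gives the desired equality. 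I do not expect any serious obstacle beyond the single use of $\Sigma_1$-soundness; the only point requiring care is that $\PR_{\tau_0}(\gdl{\psi})$ is genuinely a $\Sigma_1$ sentence, so that soundness applies, and that its truth in $\N$ correctly decodes to provability in $T_0$.
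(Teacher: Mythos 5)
Your proposal is correct and follows essentially the same route as the paper's own proof: apply Theorem \ref{MT} to extract the $T_1$-provable equivalence of the two provability predicates, use Fact \ref{DC}.1 together with the $\Sigma_1$-soundness of $T_1$ to conclude $\Th(T_1) \subseteq \Th(T_0)$, transfer the provable equivalence down to $T_0$, and invoke Theorem \ref{MT} again with the roles of the theories swapped.
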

\begin{proof}
Suppose $\QPL_{\tau_1}^{\Sigma_1}(T_0) \subseteq \QPL_{\tau_1}^{\Sigma_1}(T_1)$ and $T_1$ is $\Sigma_1$-sound. 
By Theorem \ref{MT}, $T_1 \vdash \forall \vec{x} (\PR_{\tau_0}(\gdl{\varphi(\vec{\dot{x}})}) \leftrightarrow \PR_{\tau_1}(\gdl{\varphi(\vec{\dot{x}})}))$ for any $\mathcal{L}_A$-formula $\varphi(\vec{x})$. 
Let $\psi$ be any $\mathcal{L}_A$-sentence with $T_1 \vdash \psi$. 
Since $T_1$ proves $\PR_{\tau_1}(\gdl{\psi})$ by Fact \ref{DC}.1, we have $T_1 \vdash \PR_{\tau_0}(\gdl{\psi})$. 
Since $T_1$ is $\Sigma_1$-sound, $T_0 \vdash \psi$. 
We have shown $\Th(T_1) \subseteq \Th(T_0)$. 
Then, for any $\mathcal{L}_A$-formula $\varphi(\vec{x})$, $T_0 \vdash \forall \vec{x}(\PR_{\tau_0}(\gdl{\varphi(\vec{\dot{x}})}) \leftrightarrow \PR_{\tau_1}(\gdl{\varphi(\vec{\dot{x}})}))$. 
By Theorem \ref{MT}, we conclude $\QPL_{\tau_1}^{\Sigma_1}(T_1) \subseteq \QPL_{\tau_0}^{\Sigma_1}(T_0)$. 
\end{proof}

\bibliographystyle{plain}
\bibliography{ref}

\end{document}